\documentclass{amsart}

\usepackage{amssymb}
\usepackage{amsmath}
\usepackage{amsthm}
\usepackage[mathscr]{eucal}

\newtheorem{thrm}{Theorem}[section]

\newtheorem{prop}[thrm]{Proposition}
\newtheorem{cor}[thrm]{Corollary}
\newtheorem{main}{Main Theorem}

\theoremstyle{definition}
\newtheorem{defn}[thrm]{Definition}

\theoremstyle{remark}

\numberwithin{equation}{section}

\newcommand{\dbar}{$\bar{\partial}$}
\newcommand{\mdbar}{\bar{\partial}}

\newcommand{\zb}{\bar{z}}

\newcommand{\Lb}{\overline{L}}

\newcommand{\Vb}{\overline{V}}
\newcommand{\Wb}{\overline{W}}

\newcommand{\omegab}{\bar{\omega}}

\newcommand{\lre}{\mathcal{E}}
\newcommand{\lrs}{\mathcal{S}}

\newcommand{\lrl}{\mathcal{L}}
\newcommand{\lrp}{\mathcal{P}}


\makeatletter
\newcommand{\opnorm}{\@ifstar\@opnorms\@opnorm}
\newcommand{\@opnorms}[1]{%
	\left|\mkern-1.5mu\left|\mkern-1.5mu\left|
	#1
	\right|\mkern-1.5mu\right|\mkern-1.5mu\right|
}
\newcommand{\@opnorm}[2][]{%
	\mathopen{#1|\mkern-1.5mu#1|\mkern-1.5mu#1|}
	#2
	\mathclose{#1|\mkern-1.5mu#1|\mkern-1.5mu#1|}
} \makeatother

\begin{document}
	
\title
[The \dbar-Neumann problem on intersection domains]
{
	Weighted estimates for the \dbar-Neumann
 problem on intersections of strictly pseudoconvex
 domains in $\mathbb{C}^2$}

\author{Dariush Ehsani}

\address{
	Hochschule Merseburg\\
	Eberhard-Leibnitz-Str. 2\\
	D-06217 Merseburg\\
	Germany}
\email{dehsani.math@gmail.com}


\begin{abstract}
	We obtain weighted estimates for
the \dbar-Neumann operator on intersections of
 two smooth strictly pseudoconvex domains in 
  $\mathbb{C}^2$.  The regularity estimates 
are described with the use of Sobolev norms with weights
which are powers of the defining functions of the 
 two domains. 

\end{abstract}

\maketitle


	
	\bibliographystyle{plain}

\section{Introduction}

This purpose of this work is to 
 study the \dbar-Neumann problem on transversal
intersections of strictly pseudoconvex domains.
Let $\Omega\subset\mathbb{C}^n$	be the 
 intersection of $m$ smoothly bounded strictly
 pseudoconvex domains, $\Omega_1,
 \ldots, \Omega_m$, which intersect
 (real) transversely, that is  
for all $1\le i_1 < \cdots < i_l \le m$, we have
 \begin{equation*}
 d\rho_{i_1}\wedge \cdots\wedge d\rho_{i_l} \neq 0
 \end{equation*}
on $\bigcap_{j=1}^l \partial\Omega_{i_j} 
  \cap \partial\Omega$, 
 where $\rho_j$ is a defining function for
 $\Omega_j$,
 or
 \begin{equation*}
 \Omega_j := \{ z\in\mathbb{C}^n | \rho_j<0 \},
 \end{equation*}
 for $j=1,\ldots, m$.
 Note in particular, we consider $m\le n$.
This definition is modeled on
  Range and Siu's description of
piecewise smooth domains \cite{RS}. 

We will be concerned with regularity estimates,
 as measured by Sobolev norms, of the
\dbar-Neumann operator, $N$, defined as the
 inverse to the $\square = \mdbar\mdbar^{\ast}
  + \mdbar^{\ast}\mdbar$ operator.  We should not
expect to prove regularity of the solution, 
 even if the given data form is smooth up to the
boundary, because of the singularities on the
 boundary of the domain.  Singularities of solutions
to elliptic equations 
 (the interior equations of the \dbar-Neumann problem
  are elliptic)
are well known in
 the situation of singular domains
\cite{Gr, K}.  
 However, limited regularity, up to a fixed order have
been shown by Michel and Shaw in \cite{MS98}:
\begin{thrm}
Let $\Omega\subset\subset \mathbb{C}^n$ be a piecewise
 smooth strictly pseudoconvex domain.  
The \dbar-Neumann operator is bounded as a 
 map of $(p,q)$-forms with
  $L^2$ coefficients, $N:L^2_{p,q}(\Omega)\rightarrow
  L^2_{p,q}(\Omega)$,
for $0\le p \le n$, $1\le q \le n-1$.  Furthermore,
 we have the following estimates:
\begin{equation*}
 \| N f\|_{W^{1/2}_{p,q}(\Omega)} \lesssim
  \|f\|_{L^2_{p,q}(\Omega)}
\end{equation*}
for $f\in L^2_{p,q}(\Omega)$.
\end{thrm}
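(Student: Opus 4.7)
The plan is to reduce the theorem to a basic coercivity estimate for the quadratic form $Q(u,u) = \|\mdbar u\|^2 + \|\mdbar^* u\|^2$ and then to upgrade coercivity to a tangential half-derivative gain. First I would verify the basic estimate
\[
\|u\|_{L^2(\Omega)}^2 \lesssim Q(u,u)
\]
for all $u$ in the form domain of $Q$. This would be established by the Morrey--Kohn--H\"ormander identity applied on each smooth piece $\partial\Omega_j \cap \partial\Omega$: the strict pseudoconvexity of $\partial\Omega_j$ makes the resulting boundary integral of the Levi form nonnegative, and the transversality condition on the defining functions ensures that the $\mdbar^*$ boundary conditions imposed by adjacent faces are compatible along the lower-dimensional strata, so that a partition of unity subordinate to the faces can be reassembled without leaving an unmanageable boundary error. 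Once the basic estimate is in hand, standard functional analysis (closed range plus trivial kernel) supplies the bounded inverse $N$ on $L^2_{p,q}(\Omega)$.

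To obtain the $W^{1/2}$ gain, I would work locally on a cover of $\overline{\Omega}$ by an interior patch together with patches attached to each smooth face. In the interior, ellipticity of $\square$ gives two derivatives for free. On a face patch I would choose a frame of tangential vector fields $T$ for $\partial\Omega_j$ and a family of tangential smoothing operators $\Lambda_\ep^{1/2}$ of order $1/2$, then apply the basic estimate to $\Lambda_\ep^{1/2} u$. The commutators $[\mdbar,\Lambda_\ep^{1/2}]$ and $[\mdbar^*,\Lambda_\ep^{1/2}]$ are of the same order as the gain being sought, so after an interpolation step they can be absorbed into the left-hand side. Letting $\ep\to 0$ yields a tangential half-derivative estimate on each face, and the equation $\square Nf = f$ combined with interior ellipticity of $\square$ converts tangential control into full $W^{1/2}$ control.

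The hard part will be the behavior near the corner strata $\partial\Omega_i\cap\partial\Omega_j\cap\partial\Omega$. Away from the corners, Kohn's classical argument would actually yield a full $W^1$ gain; it is only at the corner that the edge singularity of the underlying wedge-type domain caps the regularity at $W^{1/2}$. The delicate point is to carry out the tangential smoothing without destroying the $\mdbar^*$ boundary condition on the adjacent face, and here the transversality hypothesis is essential: it provides a smooth local splitting of the cotangent bundle into the conormals $d\rho_i,d\rho_j$ and their complement, so the smoothings can be arranged to preserve all boundary conditions simultaneously, at the cost of exactly the half derivative that the corner ultimately allows.
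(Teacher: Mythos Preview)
This theorem is not proved in the paper: it is quoted in the Introduction as a result of Michel and Shaw \cite{MS98}, and the paper's own contribution (the Main Theorem, for $n=2$ with weighted norms) builds on it as a known input. So there is no proof in the paper to compare your proposal against.

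That said, as a sketch of the Michel--Shaw argument your outline is in the right spirit for the first step but underdeveloped at the decisive point. The basic estimate $\|u\|^2 \lesssim Q(u,u)$ does come from the Morrey--Kohn--H\"ormander identity applied face by face, with strict pseudoconvexity making each boundary contribution nonnegative; that part is fine. The $W^{1/2}$ step, however, is not obtained in \cite{MS98} by inserting tangential $\Lambda_\varepsilon^{1/2}$ operators into the basic estimate and absorbing commutators. The difficulty you flag --- that a tangential smoothing on one face does not preserve the $\mdbar$-Neumann condition on an adjacent face --- is real, and your proposed resolution (``a smooth local splitting of the cotangent bundle \ldots at the cost of exactly the half derivative'') is an assertion, not an argument: you have not explained what concretely limits the gain to $1/2$ or how the edge geometry enters quantitatively. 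In Michel--Shaw the half-derivative gain is extracted differently, from the structure of $Q$ itself (control of $\mdbar u$, $\mdbar^\ast u$ in $L^2$ plus a direct estimate showing these dominate a Besov-type $W^{1/2}$ norm on Lipschitz/piecewise smooth domains), rather than from a commutator-absorption scheme. If you want to pursue your route, the missing ingredient is a precise construction of tangential operators near the corner, together with a commutator bound that actually yields $1/2$ and no more; as written, that step is a gap.
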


We note that the symbol $\lesssim$ means 
 $\le c$ where $c$ is independent of the 
functions (or forms) being estimated.

In \cite{EhLi12}, Lieb and the current author
 considered singularities of another
 kind (arising on Henkin-Leiterer domains), and
obtained $L^p$
 estimates of the \dbar-Neumann operator  
 with the use of weights which vanish
 at the singularity of the domains. 
We use that idea of vanishing weights in the current
 paper.
 
We first define the weighted Sobolev norms
 on half-spaces
as in \cite{Eh18_halfPlanes}.  
 Let $\mathbb{H}_j^{2n}$ denote the
half-plane 
\begin{equation*}
 \mathbb{H}_j^{2n} = \{
   (x,\rho_j) \in \mathbb{R}^{2n}|
    \rho_j <0
 \}
\end{equation*}
and for multi-index, $I=(i_1,\ldots,i_l)$,
\begin{equation*}
 \mathbb{H}_I^{2n} := \bigcap_{i\in I} H_i^{2n}.
\end{equation*}
We use a non-standard notation to
 write powers of the $\rho_i$ functions
using the index notation to stress that 
 the powers (over each $\rho_i$) will be the same.
We write
\begin{equation*}
 \rho_I^{k\times l} :=
  \rho_{i_1}^k \rho_{i_2}^k \cdots \rho_{i_l}^k.
\end{equation*}

\begin{defn}
\label{defnWghtSpace}
Let $I$ be a multi-index, and
 $J$ a subset of $I$.
 For $\alpha\in \mathbb{R}$,
and
$s,k$ integers $\ge 0$ we have the
 spaces
\begin{multline*}
W^{\alpha,s} (\mathbb{H}_I^{2n},
\rho_J,k)
= \\
\left\{
f\in W^{\alpha}(\mathbb{H}_I^{2n}) 
\Big| \rho_J^{(sk - rk)\times |J| }f \in
W^{\alpha + s-r}(\mathbb{H}_I^{2n}) 
\mbox{ for each } 0\le r\le s
\right\} 
\end{multline*}
with norm
\begin{equation*}
\left\| f \right\|_{
	W^{\alpha,s} (\mathbb{H}_I^{2n},\rho_J,k) 
} 
= \sum_{0\le r \le s} 
\left\|\rho_J^{(sk - rk)\times |J|} f \right\|_{
	W^{\alpha + s -r} (\mathbb{H}_I^{2n}) 
}. 
\end{equation*}
In the case $J=I$ we will simply write
$W^{\alpha,s} (\mathbb{H}_I^{2n},
\rho,k):=W^{\alpha,s} (\mathbb{H}_I^{2n},
\rho_I,k)$.

\end{defn}

In dealing with boundary values
 (restriction to $\rho_j=0$ for some $j$),
a notation to deal with a missing index will be
 useful.
To indicate a missing index
(from the multi-index, $I$, which 
is to be known), we use 
$\rho_{\hat{i}_j} 
:= \rho_{i_1} \cdots \rho_{i_{j-1}}\rho_{i_{j+1}}
\cdots \rho_{i_l}^k$
and
\begin{equation*}
\rho_{\hat{i}_j}^{k\times (l-1)} :=
\rho_{i_1}^k \cdots \rho_{i_{j-1}}^k\rho_{i_{j+1}}^k
\cdots \rho_{i_l}^k.
\end{equation*}
Then a weighted Sobolev space 
 can be defined on 
$\mathbb{H}^{2n-1}_{I_{\hat{i}_j}}
 := \bigcap_{i\in I} \mathbb{H}_i^{2n}\big|_{\rho_{i_j} =0}$,
with norm
\begin{equation*}
\left\| f \right\|_{
	W^{\alpha,s} (\mathbb{H}^{2n-1}_{I_{\hat{i}_j}},
	\rho_{\hat{i_j}},k) 
} 
= \sum_{0\le r \le s} 
\left\|\rho_{\hat{i}_j}^{(sk - rk)\times (l-1)} f \right\|_{
	W^{\alpha + s -r} (\mathbb{H}^{2n-1}_{I_{\hat{i}_j}}) 
}. 
\end{equation*}

We can generalize the above spaces to 
 general intersection domains 
(of smooth domains) by localizing and using
 a coordinate system including the 
$\rho_j$ functions.

Our main result is the following
\begin{main}
	Let $\Omega\subset \subset \mathbb{C}^2$
be an intersection of two smoothly bounded
 strictly pseudoconvex domains.
	Let $f\in W^{0,2}_{(0,1)}(\Omega,\rho_j,2)$ for
	$j=1,2$.
	Let $N$ be solution operator 
	to the \dbar-Neumann problem.
	Then
\begin{equation*}
	\sum_j \|Nf\|_{W_{(0,1)}^{1/2,s}
		\left(
		\Omega,
		\rho,2
		\right)} \lesssim
	\sum_j \|f\|_{W_{(0,1)}^{0,s}
		\left(
		\Omega,
		\rho_j,2
		\right)}.
	\end{equation*}
\end{main}

We use a representation of the solution to the
 \dbar-Neumann problem as a sum of a
solution to a homogeneous Dirichlet problem
 (with the use of Green's operator) and the
solution to an inhomogeneous Dirichlet problem
 (with the use of a Poisson operator)
\cite{CNS92}.  Weighted estimates 
 on the boundary will be obtained by 
reducing the boundary conditions for the
 \dbar-Neumann problem to an equation, which to
highest order is just the Kohn boundary 
 problem, for $\square_b$. 
This study of the boundary condition
 is based on our earlier work
in \cite{Eh18_dno}, and is carried out 
 in Section \ref{secBndryEqns}.  
In particular the Dirichlet to Neumann operator
 (DNO),
 a boundary value operator expressing the
boundary values of normal derivatives in terms
of the given 
 boundary values, plays an important role, and
weighted estimates for the DNO are derived in
 Section \ref{secDNO}, which may be of interest
in its own right. 
 Estimates for the boundary solution 
 are obtained in Section \ref{secWeightedEst}
using standard integration by parts techniques.

In order to use these boundary estimates 
 to conclude estimates for the solution operator, 
the application of the Poisson operator to
 the boundary solution is studied.  
The Poisson operator is represented as 
 a sum of pseudodifferential operators
and combinations of pseudodifferential operators
 with restriction to boundary operators. 
Weighted estimates for such resulting
 operators used in 
Sections \ref{secPoisson} and
 \ref{secDNO} are taken from
\cite{Eh18_halfPlanes}.  In a similar manner
weighted estimates for the solution to a homogeneous
 Dirichlet problem are worked out in 
  Section \ref{secGreens}.

For the sake of simplicity, we work with 
 {\it generic corners} following the
terminology of \cite{CV13} in which
\begin{equation*}
 \partial\rho_{i_1} \wedge 
 \cdots \wedge \partial\rho_{i_m} \neq 0
\end{equation*}
 at points where
$\partial\Omega_{i_1}, \ldots
\partial\Omega_{i_m}$ intersect.
Although we 
 setup much of the work in 
$\mathbb{C}^n$, our results are restricted to
 $n=2$, mainly in order to make use of the
vanishing of a problematic term, which 
 has worse weighted regularity mapping
properties than those used to obtain our 
 Main Theorem.  We refer the reader
  to the simplification in 
Section \ref{secBndryEqns} to see where the problematic
 term can just be ignored.

\section{Notation for operators on intersection domains}

\label{secNotation}

We will use the index notation, $\hat{i}$
to refer to an index, $i$, which is to be
omitted in whatever variables, respectively, 
operators, are being considered.  Thus, for
instance, we use $F.T._{\hat{j}}$ to denote
the partial Fourier Transform in all 
variables other than the $j^{th}$ variable.
Similarly,
\begin{equation*}
x_{\hat{j}} :=
(x_1,\ldots, x_{j-1},x_{j+1},\ldots, x_n).
\end{equation*} 
In specifying a value for a particular 
coordinate, we mark that value
 with
the index.  Thus we write
\begin{align*}
\left( F.T._{\hat{j}} g \right) \Big|_{x_j=0}
=& \widehat{g}(\xi_1,\ldots,\xi_{j-1},0_j
,\xi_{j+1},\ldots, \xi_n).
\end{align*}
 This should not be confused with
$\widehat{g}(\xi_1,\ldots,\xi_{j-1},0
,\xi_{j+1},\ldots, \xi_n)
 = \widehat{g}(\xi)\Big|_{\xi_j=0}$.
With slight abuse of notation we will 
 rearrange the order of the arguments and write
the variable fixed to a specific value first; we
 write
$\widehat{g}(0_j,\xi_{\hat{j}})$ in place of
\newline $\widehat{g}(\xi_1,\ldots,\xi_{j-1},0_j,
 ,\xi_{j+1},\ldots, \xi_n)$.
Note that above we use $\widehat{g}$ to refer
to both the partial and full Fourier Transforms
of the function $g$, the particular transform
being clear from context. 

Boundary values naturally arise 
 in the Fourier representation of differential
equations (on domains with
 boundary) and it will be useful to have a 
 notation representing the restriction to
a given boundary.  We will use the operator
 $R_j$ to denote the restriction of a 
function (or form) to $\partial\Omega_j$:
\begin{equation*}
 R_j \phi = \phi \Big|_{\rho_j=0}.
\end{equation*}

We borrow notation from
 \cite{Eh18_dno} on pseudodifferential operators.
In particular, we write
 $\Psi^{k}(\Omega)$ to denote the class of pseudodifferential
operators of order $k$ on $\Omega$, and we reserve 
 the notation $\Psi^k$ to indicate an operator belonging
to class $\Psi^{k}(\Omega)$.  Often the meaning
 of $\Psi^k$ will change from one line to
the next.  We also use $\Psi_b^k$ to denote an operator
 belonging to class $\Psi^k(\partial\Omega)$.  
With slight abuse of
 notation, we will also use
$\Psi_{b}^k$ to denote an operator belonging
 to
 class $\Psi^k(\partial\Omega_j)$ for
any particular boundary 
 $\partial\Omega_j$.

As is customary, we write
 $\Psi^{-\infty}$ to denote a smoothing operator:
\begin{equation*}
  \Psi^{-\infty}: W^s(\Omega) \rightarrow W^{-\infty}(\Omega)
\end{equation*}
 for any $s$.

In writing a pseudodifferential operator 
 (on $\mathbb{R}^n$) applied to
 a distribution supported on a domain, we shall use the
convention that the distribution will be considered to be
 extended by zero to all of $\mathbb{R}^n$.  Thus,
for instance, if $\varphi \in L^2(\mathbb{H}^2_2)$,
 where 
\begin{equation*}
\mathbb{H}_{2}^2 := \{
(x_1,x_2) \in \mathbb{R}^2 : x_2 < 0
\},
\end{equation*}
and $A\in \Psi^2(\mathbb{R}^2)$, we write
$A\varphi$ to mean
\begin{equation*}
 A\varphi := \frac{1}{(2\pi)^2}
  \int \sigma(A) \widehat{\varphi^{E_2}}(\xi_1,\xi_2)
   e^{ix\xi} d\xi_1 d\xi_2,
\end{equation*}
 where $\varphi^{E_2}$ refers to extension by zero
over $x_2>0$.

In general if we have a distribution, 
 $\varphi$, defined
 on $\Omega = \cap_j \Omega_j$, we will 
use the notation $\varphi^{E_k}$ to denote
 the extension by zero of $\varphi$
to $\cap_{j\neq k}\Omega_j$.  
 Similar extensions will be used for boundary
distributions.  For example, in the case 
 $\Omega= \Omega_1\cap \Omega_2$, consider
$\varphi \in 
 L^2 (\partial\Omega_1\cap \partial\Omega)$.  
We denote $\varphi^{E_2}$
  to be the distribution
supported on all of 
 $\partial\Omega_1$ defined by 
  an extension by zero.
From Theorem 1.4.2.4 in \cite{Gr} 
  we have the following
 results concerning extension by zero:
for $\Omega = \cap_j \Omega_j$ and
 $0\le s\le 1/2$,
if $g\in W^{s}(\Omega)$ then
 $g^{E_k} \in W^{s}(\cap_{j\neq k}\Omega_j)$.

Operators mapping distributions supported on
 one boundary, $\partial\Omega_j$, to a distribution
supported on another boundary, 
 $\partial\Omega_k$, also arise in the
pseudodifferential analysis of operators on
 intersection domains.  On the one hand,
Fourier Transforms of derivatives lead to 
 boundary value terms, while restrictions to
the various boundaries of the intersection 
 domain produce new terms supported on the
respective boundaries.  It will be necessary 
 to study the Sobolev mapping properties of
such operators.

As an example consider the intersection of
 half-spaces, $\mathbb{H}_{1,2}^2$, defined by
\begin{equation*}
 \mathbb{H}_{1,2}^2 := \{
  (x_1,x_2) \in \mathbb{R}^2 : x_1, x_2 < 0
 \}.
\end{equation*}
and a function, $h(x_1,x_2)$, in a Sobolev space,
  $W^s(\mathbb{H}_{1,2}^2)$, for some 
   $s\ge 0$. A Fourier Transform
of $\partial_{x_1} h$ would lead to 
 partial transforms on $x_1=0$:
\begin{equation*}
 \widehat{\partial_{x_1} h}
  = F.T_{\hat{1}} h(0_1,\xi_2)
   + i\xi_1 \widehat{h}.
\end{equation*}
It is the first term on the right,
in combination with pseudodifferential 
 operators and restrictions,
which we will now discuss.

We illustrate with an example.  Let
 $\chi'(\xi_2)\in C^{\infty}(\mathbb{R})$
be such that $\chi'(\xi_2) \equiv 0$ in a neighborhood of
 the origin.  We
 look at the operator which sends
$h|_{x_1=0}$ to a function supported on
 $x_2=0$ via
\begin{align*}
 R_2 \circ \int \chi'(\xi_2)
\frac{\widehat{h}(0_1,\xi_2)}{\xi_1^2+\xi_2^2}
 e^{i x \xi} d\xi =&
  \int \chi'(\xi_2)
  \frac{\widehat{h}(0_1,\xi_2)}{\xi_1^2+\xi_2^2}
  e^{i x_1 \xi_1} d\xi \\
=&\pi \int \chi'(\xi_2)
\frac{\widehat{h}(0_1,\xi_2)}{|\xi_2|}
e^{x_1 |\xi_2|} d\xi_2,
\end{align*}
where 
$R_2$ is a restriction operator, 
denoting restriction to $x_2=0$.
Taking derivatives yields
\begin{equation*}
 \frac{\partial^k}{\partial x_1^k}
  \int \chi'(\xi_2)
  \frac{\widehat{h}(0_1,\xi_2)}{|\xi_2|}
  e^{x_1 |\xi_2|} d\xi_2
= \int \chi'(\xi_2)
|\xi_2|^{k-1}
\widehat{h}(0_1,\xi_2)
e^{x_1 |\xi_2|} d\xi_2,
\end{equation*}
and upon squaring and integrating
\begin{align*}
\Bigg\|
\frac{\partial^k}{\partial x_1^k}
\int \chi'(\xi_2)
\frac{\widehat{h}(0_1,\xi_2)}{|\xi_2|}
e^{x_1 |\xi_2|}& d\xi_2
\Bigg\|^2_{L^2(\mathbb{R}_{<0})}\\
\lesssim & 
\int \chi'(\xi_2)
|\xi_2|^{2k-2}
\left| \widehat{h}(0_1,\xi_2)
\right|^2	
e^{2 x_1 |\xi_2|}  d\xi_2 dx_1\\
\simeq &
\int \chi'(\xi_2)
|\xi_2|^{2k-3}
\left| \widehat{h}(0_1,\xi_2)
\right|^2	
  d\xi_2\\
\lesssim&
\Big\|
h|_{x_1=0}  
\Big\|^2_{W^{k-3/2}(\mathbb{R}_{<0})}.
\end{align*}
Interpolating for non-integer
Sobolev spaces, we get
\begin{equation*}
\Bigg\|
 R_2 \circ \int \chi'(\xi_2)
\frac{\widehat{h}(0_1,\xi_2)}{\xi_1^2+\xi_2^2}
e^{i x \xi} d\xi
\Bigg\|^2_{W^{s+3/2}\big(\{x_1<0\}\big)}\\
\lesssim 
\left\|
h|_{x_1=0}  
\right\|^2_{W^{s}
	\big(\{x_1<0\}\big)}
\end{equation*}
for $s\ge 0$.

Generalizing the above result to apply to 
 transversal intersections of domains, we
use the notation
 $\lre^{jk}_{-\alpha,\gamma}$, where
 $-\alpha-1/2 \in\mathbb{N}$,
for $\alpha\ge 1/2$, for
 certain operators (to be made precise shortly)
with the property 
\begin{equation}
 \label{errorMap}
\lre^{jk}_{-\alpha,\gamma}:  W^s(\partial\Omega_j)
 \rightarrow W^{s+\alpha}(\partial\Omega_k)
\end{equation}
valid for $s\ge \gamma$.
 In the case $\gamma = 0$ we will 
use the notation
\begin{equation*}
 \lre^{jk}_{-\alpha} :=
  \lre^{jk}_{-\alpha,0}.
\end{equation*}

To make precise which operators are to 
 be included in the
  $\lre^{jk}_{-\alpha,\gamma}$ operators,
we start with $\gamma=0$ and include
 in $\lre^{jk}_{-\alpha}$ any
 operators
which for any $N$, can be written in the form
\begin{equation*}
\lre_{-\alpha}^{jk} 
= R_k \circ A_{-\alpha-\frac{1}{2}} 
+ R_k \circ \Psi^{-N} ,
\end{equation*}
where
 $A_{-\alpha-1/2} \in \Psi^{-\alpha-1/2}(\Omega)$ is
such that its symbol, 
  $\sigma(A_{-\alpha-1/2})(x,\rho,\xi,\eta)$
has the property that it is meromorphic in the
$\eta$ variables with poles in $\eta_j$ which are 
 elliptic symbols in class $\lrs^1(\Omega)$ 
  (restricted to $\eta_j=0$).  
 We shall reserve the notation, $A_{-k}$
 for $k\ge 1$, for operators whose symbols satisfy the above
conditions.  As we will see this condition applied again,
 we refer to a pseudodifferential
operator of order $-k$, 
 $\Psi^{-k}$, which can be written
  for any $N\ge k$ in the form 
\begin{equation*} 
 \Psi^{-k} = A_{-k} + \Psi^{-N}
\end{equation*}
as {\it decomposable}.

Note that in the
 case $j=k$ above, we have
  included in 
$\lre_{-\alpha}^{jj}$ (for $\alpha \in \mathbb{N}$) 
 terms of the form
\begin{align*}
 \lre_{-\alpha}^{jj} =& R_j \circ \Psi^{-\alpha -1}\circ R_j\\
  =& \Psi^{-\alpha }_{bj}
\end{align*}
for $\alpha\ge 1$, where we 
 use the subscript $bj$ to specify
a pseudodifferential operator
 on the boundary $\partial\Omega_j$.
The last line is due to
 a restriction property of 
pseudodifferential operators
 as given in Lemma 2.7 
  of \cite{Eh18_halfPlanes}.

Lastly, we will include compositions 
 of such boundary operators, and here the
$\gamma$ value will be of importance.
We write
\begin{equation*}
\lre^{jk}_{-\alpha,\alpha_1} = 
\lre^{lk}_{-\alpha_1} \circ 
\lre^{jl}_{-\alpha_2}
\qquad \alpha_1,\alpha_2 \ge 1/2,
\quad \alpha=\alpha_1+\alpha_2
\end{equation*}
for $l\neq j$.

We will also write 
$\lre_{-\beta}^{jk} = \lre_{-\alpha}^{jk}$ for any
 $\alpha \le \beta$,
and
 $\lre_{-\beta,\gamma}^{jk} = \lre_{-\alpha}^{jk}$
for $\alpha = \beta-\gamma$.

We refer to 
 \cite{Eh18_halfPlanes} for estimates regarding the
$\lre_{-\alpha}^{jk}$ operators.
 From Corollary 4.7 in \cite{Eh18_halfPlanes},
we have
\begin{thrm}
\label{weightedLRE}
 Let 	$-1/2\le \delta \le 1/2$,
and
  $g_{bj} \in W^{\delta ,s}
 \left(
  \partial \Omega \cap \partial \Omega_j 
  ,\rho_{\hat{j}}, \lambda
  	\right)$.
Then
for $\beta\ge \gamma$ with
$\beta - \alpha\le \delta$, we have
\begin{equation*}
\left\|
\lre_{-\alpha,\gamma}^{jk} g_{bj}
\right\|_{W^{\beta,s}\left(
 \partial \Omega \cap \partial \Omega_k ,
	\rho_{\hat{k}},\lambda \right)}
\lesssim \| g_{bj}\|_{W^{\beta-\alpha,s}
	\left(
	 \partial \Omega \cap \partial \Omega_j,
	 \rho_{\hat{j}},\lambda \right)}.
\end{equation*}
\end{thrm}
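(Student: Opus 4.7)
The plan is to reduce to model intersections of half-planes (where the $\rho_i$ become coordinate hyperplanes) via a smooth localization and then prove the inequality by Fourier-analytic means, treating the two cases $\gamma=0$ and $\gamma>0$ separately. Since the final statement is cited from Corollary 4.7 of \cite{Eh18_halfPlanes}, the work is really in assembling the ingredients: decomposability of the interior operator, residue calculus at the poles in $\eta_j$, and control of commutators with the weights $\rho_{\hat\ell}$.

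\textbf{Case $\gamma = 0$.} Write $\lre_{-\alpha}^{jk} = R_k \circ A_{-\alpha-1/2} + R_k\circ \Psi^{-N}$ with $N$ taken so large the smoothing remainder trivially satisfies the bound. For the main term, represent $A_{-\alpha-1/2}$ on the Fourier side; since $\sigma(A_{-\alpha-1/2})$ is meromorphic in $\eta$ with poles in $\eta_j$ whose residues are elliptic symbols of class $\lrs^1(\Omega)$, one can integrate out $\eta_j$ by contour integration exactly as in the example worked in the excerpt. The resulting operator converts boundary data on $\{\rho_j=0\}$ into a one-sided Poisson-type extension into $\Omega_j$, and then $R_k$ restricts to $\{\rho_k=0\}$. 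The calculation in the excerpt shows that each residue plus restriction gains $\alpha+\tfrac12$ from the pseudodifferential order and an extra $\tfrac12$ from the boundary-to-boundary restriction, for a net gain of $\alpha$ Sobolev orders. This gives the unweighted estimate $\|\lre_{-\alpha}^{jk}g_{bj}\|_{W^{\beta}} \lesssim \|g_{bj}\|_{W^{\beta-\alpha}}$ for $\beta - \alpha \le \delta \le 1/2$, with the restriction $\delta \ge -1/2$ ensuring that the boundary trace of the Poisson-type extension lies in a reasonable trace space.

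\textbf{Weights.} To add weights, conjugate by $\rho_{\hat k}^{(sk - rk)\times(|I|-1)}$ on the target and by $\rho_{\hat j}^{(sk-rk)\times(|I|-1)}$ on the source. Since $\rho_\ell$ for $\ell \ne j,k$ appears on both sides symmetrically, those factors commute freely through the contour integral; the issue is the asymmetric pair $\rho_k$ appearing in the source weight and $\rho_j$ appearing in the target weight. Here one uses that after the $\eta_j$-residue the operator is a Poisson extension in the $\rho_j$-direction, so multiplication by $\rho_j$ acts on it as differentiation in the dual variable $|\eta_{\hat j}|$; this reduces powers of $\rho_j$ to lower-order terms that can be absorbed into the unweighted estimate applied at level $\beta-1$, $\beta-2$, \dots. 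Symmetrically, the restriction $R_k$ annihilates $\rho_k$-prefactors, so powers of $\rho_k$ in the weight produce commutators supported on $\partial\Omega_k$ that gain one derivative each. Summing over $0\le r\le s$ in the definition of the weighted norm then reproduces exactly the weighted inequality claimed.

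\textbf{Case $\gamma > 0$.} By the definition $\lre^{jk}_{-\alpha,\gamma} = \lre^{lk}_{-\alpha_1}\circ \lre^{jl}_{-\alpha_2}$ with $\alpha = \alpha_1+\alpha_2$ and $l\neq j$, apply the $\gamma=0$ estimate in two steps, once from $\partial\Omega\cap\partial\Omega_j$ to $\partial\Omega\cap\partial\Omega_l$ with gain $\alpha_2$, and once from $\partial\Omega\cap\partial\Omega_l$ to $\partial\Omega\cap\partial\Omega_k$ with gain $\alpha_1$. The intermediate norm sits at level $\beta-\alpha_1$ on $\partial\Omega\cap\partial\Omega_l$; the hypothesis $\beta\ge\gamma$ is exactly what is needed to ensure that both intermediate levels lie in the range $[-1/2,1/2]$ where the first step is valid.

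\textbf{Main obstacle.} The delicate point, and where the convention $\gamma = \beta - \alpha$ (versus $\gamma=0$) really matters, is in tracking the simultaneous effect of the weights $\rho_j, \rho_k, \rho_\ell$ as one conjugates a \emph{composition} of boundary-to-boundary operators. Each conjugation produces commutators with $\rho$-factors that act as new, lower-order $\lre$-type operators; verifying that these fit back into the $\lre^{\cdot\cdot}_{-\alpha',\gamma'}$ framework (so that an inductive argument on the total weight degree $s$ closes) is the main bookkeeping challenge, and is why the statement is phrased with the somewhat flexible convention $\lre_{-\beta,\gamma}^{jk} = \lre_{-\alpha}^{jk}$ for $\alpha=\beta-\gamma$ recorded just before the theorem.
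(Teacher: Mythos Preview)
The paper does not actually prove this theorem: it is quoted verbatim as Corollary~4.7 of \cite{Eh18_halfPlanes}, with no argument given in the present manuscript. So there is no ``paper's own proof'' to compare against here; what you have written is a sketch of how the cited result is presumably established in that reference.

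Your outline is broadly in the right spirit---residue integration in $\eta_j$ for the decomposable piece, commutation of $\rho$-weights through the resulting Poisson-type extension, and composition for $\gamma>0$---and matches the philosophy of the worked example in Section~\ref{secNotation}. One point to tighten: in the $\gamma>0$ case you say the hypothesis $\beta\ge\gamma$ ensures ``both intermediate levels lie in the range $[-1/2,1/2]$.'' That is not quite the constraint. For the basic $\gamma=0$ operators the theorem requires the \emph{output} level to satisfy $\beta\ge 0$ (this is the condition $\beta\ge\gamma$ with $\gamma=0$), while the $[-1/2,1/2]$ range governs the \emph{input} level via the parameter $\delta$. In the composition $\lre^{lk}_{-\alpha_1}\circ\lre^{jl}_{-\alpha_2}$ with $\gamma=\alpha_1$, the intermediate output level on $\partial\Omega_l$ is $\beta-\alpha_1$, and the requirement $\beta-\alpha_1\ge 0$ is exactly $\beta\ge\gamma$; see the discussion of $\lre_{-1/2}^{12}\circ\lre_{-1/2}^{21}$ immediately following the theorem. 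Your weight-commutation paragraph is also somewhat loose: the mechanism you describe (that $\rho_j$ acting on a Poisson extension trades for a derivative in the dual radial variable) is essentially the content of \eqref{lowerOrder}, but the asymmetric handling of $\rho_j$ versus $\rho_k$ in the weights needs more care than ``$R_k$ annihilates $\rho_k$-prefactors,'' since $\rho_k$ does not vanish on $\partial\Omega_j$ where the input lives.
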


To illustrate the importance of 
 the $\gamma$ value, we consider
 a composition of two
$\lre^{jk}_{-1/2}$ operators:
$
 \lre_{-1/2}^{12} \circ
  \lre_{-1/2}^{21}
$
cannot be written as $\lre_{-1}^{11}$
 due to the condition in 
Theorem \ref{weightedLRE} that the
 Sobolev norm on the left-hand side of 
the estimates be $\ge 0$.  The 
 condition in two applications of 
the Theorem to obtain estimates for 
 $ \lre_{-1/2}^{12} \circ
 \lre_{-1/2}^{21}
 $
will however be satisfied if we 
 try to estimate 
Sobolev $1/2$ estimates.
Thus, we can write
\begin{equation*}
  \lre_{-1/2}^{12} \circ
 \lre_{-1/2}^{21}
  = \lre_{-1,1/2}^{11}.
\end{equation*}
Most such operators in this
 article will involve a $\gamma$ value
of zero, but in a few places 
 a higher order will be needed due to 
various compositions of operators.  

We now give
 some results concerning interior estimates
involving decomposable operators. 
Suppose that the operator
$A\in \Psi^{-k}$ is decomposable.
Then we have the following theorem
concerning weighted estimates
 from \cite{Eh18_halfPlanes}:
\begin{thrm}
\label{thrmWghtdDecomp}
(Theorems 4.3 and 4.4
 in \cite{Eh18_halfPlanes})
	\label{weightedPsi}
	Let $A_{-\alpha}$ be 
decomposable operator
(of order $-\alpha\le -1$).
Then, for $1/2\le \beta\le\alpha+1/2$,
and \newline $g_{bj}\in W^{\max\{\beta-\alpha ,0\},s}
\left(
\partial\Omega\cap \partial\Omega_j,
 \rho_{\hat{j}},k
\right)$,
\begin{equation*}
\left\|
A_{-\alpha} g_{bj}
\right\|_{W^{\beta-1/2,s}
	\left( \Omega,
	\rho,k \right)}
\lesssim 
\| g_{bj}\|_{W^{\beta-\alpha,s}
	\left(
	\partial\Omega\cap \partial\Omega_j,
	\rho_{\hat{j}},k	\right)}.
\end{equation*}
\end{thrm}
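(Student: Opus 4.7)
The plan is to reduce to a model intersection of half-spaces via a partition of unity and boundary flattening, so that one may assume $\Omega = \mathbb{H}_I^{2n}$ and $A_{-\alpha}$ acts on $g_{bj}$ extended by zero from $\partial\mathbb{H}_j^{2n}$. After this reduction all computations become Fourier-analytic on $\mathbb{R}^{2n}$.

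Using the decomposability hypothesis, the symbol $\sigma(A_{-\alpha})(x,\rho,\xi,\eta)$ is meromorphic in $\eta_j$ with poles located at $\eta_j = iq(x,\rho,\xi,\eta_{\hat{j}})$ where $q$ is an elliptic symbol of class $\mathcal{S}^1$. For $\rho_j<0$, I would deform the $\eta_j$-contour into the upper half plane and collect the residue, producing the Poisson-type representation
\begin{equation*}
A_{-\alpha} g_{bj}(x,\rho) =
\int \widetilde{K}(x,\rho_{\hat{j}},\xi,\eta_{\hat{j}})\,
\widehat{g}_{bj}(\xi,\eta_{\hat{j}})\,
e^{\rho_j q(x,0,\xi,\eta_{\hat{j}})}\,
e^{i(x\cdot\xi + \rho_{\hat{j}}\cdot\eta_{\hat{j}})}
\, d\xi\, d\eta_{\hat{j}} + (\mathrm{smoothing}),
\end{equation*}
with $\widetilde{K}$ of order $-\alpha+1/2$ in the tangential frequencies and $\mathrm{Re}\, q \gtrsim \langle(\xi,\eta_{\hat{j}})\rangle$.

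The unweighted part of the estimate then follows from Plancherel together with the key identity
\begin{equation*}
\int_{-\infty}^{0} \rho_j^{2m}\, e^{2\rho_j \mathrm{Re}\, q}\, d\rho_j
\simeq \langle(\xi,\eta_{\hat{j}})\rangle^{-(2m+1)},
\end{equation*}
which for $m=0$ yields the familiar $1/2$-order gain from boundary to interior (accounting for $A_{-\alpha}$ being order $-\alpha$ while $g_{bj}^{E}$ loses $1/2$ upon zero-extension), and for $m\ge 1$ converts each additional factor of $\rho_j$ into one extra order of Sobolev smoothness. This is precisely the exchange built into Definition \ref{defnWghtSpace}: the summand of the target norm at level $r$ pairs the weight $\rho_I^{(sk-rk)\times|I|}$ with Sobolev order $\beta-1/2+s-r$, tracking the same total budget.

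The main obstacle is bookkeeping the full family of weights $\rho_I$ through the operator. For each $i\ne j$ the factor $\rho_i$ is a tangential weight on $\partial\Omega_j$ and commutes with $A_{-\alpha}$ modulo commutator terms of lower order or of the same decomposable form with improved gain; these match the boundary weight $\rho_{\hat{j}}^{(sk-rk)\times(|I|-1)}$ on the right-hand side. The single normal factor $\rho_j$ is converted to Sobolev smoothing by the identity above. The two endpoint cases $\beta = 1/2$ and $\beta = \alpha + 1/2$ are established directly this way, and intermediate $\beta$ follow by Sobolev interpolation; the smoothing remainders $\Psi^{-N}$ are harmless once $N$ is chosen larger than all indices in play.
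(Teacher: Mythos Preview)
The paper does not actually prove this theorem: it is quoted verbatim from the companion reference \cite{Eh18_halfPlanes} (as Theorems 4.3 and 4.4 there), and no argument is given in the present paper beyond the citation. So there is no ``paper's own proof'' to compare against.

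That said, your sketch is consistent with the machinery the paper develops and uses elsewhere. The reduction to half-spaces, the residue computation in $\eta_j$ to obtain a Poisson-type kernel $e^{\rho_j q}$, and the conversion of powers of $\rho_j$ into Sobolev gain via $\int_{-\infty}^{0}\rho_j^{2m}e^{2\rho_j\operatorname{Re} q}\,d\rho_j \simeq \langle(\xi,\eta_{\hat{j}})\rangle^{-(2m+1)}$ are exactly the mechanisms the paper invokes in Section~\ref{secPoisson} (see the discussion surrounding \eqref{lowerOrder} and the integrations leading to \eqref{intCross}). Your handling of the tangential weights $\rho_i$ for $i\ne j$ by commuting them through $A_{-\alpha}$ at the cost of lower-order decomposable errors mirrors the commutator bookkeeping in \eqref{case211} and the proof of Theorem~\ref{ThetaInt}. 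So while I cannot confirm this matches the cited proof line by line, the ingredients you list are the expected ones and nothing in your outline is obviously wrong.

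One point to watch: you assert that the endpoint cases $\beta=1/2$ and $\beta=\alpha+1/2$ can be established directly and then interpolate. The weighted spaces $W^{\alpha,s}(\Omega,\rho,k)$ are not standard Sobolev spaces, and you should check (or cite) that they form an interpolation scale compatible with the operator; otherwise it is safer to argue the full range of $\beta$ directly from the kernel representation, which your integral identity already supports.
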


 For pseudodifferential operators
  acting on a distribution supported on the
 interior of $\Omega$ (as opposed to the
  boundary as in the above theorems),
we have
\begin{thrm}[Theorem 4.6 in \cite{Eh18_halfPlanes}] 
\label{PsiInt}
Let $A_{-\alpha}\in \Psi^{-\alpha}(\Omega)$
 for $\alpha\ge 0$.  For
 $g \in W^{0,s} (\Omega,\rho_{\hat{j}},k)$
 \begin{equation*}
 \| A_{-\alpha} g
 \|_{W^{\alpha,s}(\Omega,\rho_{\hat{j}},k)}
 \lesssim
 \left\|
 g
 \right\|_{W^{0,s}
 	(\Omega,\rho_{\hat{j}},k)}.
 \end{equation*}
\end{thrm}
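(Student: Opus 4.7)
The approach is to commute the weight $W:=\prod_{i\in\hat{j}}\rho_i^{(s-r)k}$ past $A_{-\alpha}$ using the standard pseudodifferential calculus and match each resulting term against a summand of $\|g\|_{W^{0,s}(\Omega,\rho_{\hat{j}},k)}$. By Definition~\ref{defnWghtSpace}, it suffices to show for each $0\le r\le s$ that
\begin{equation*}
\bigl\|W\,A_{-\alpha}g\bigr\|_{W^{\alpha+s-r}(\Omega)}\;\lesssim\;\|g\|_{W^{0,s}(\Omega,\rho_{\hat{j}},k)}.
\end{equation*}

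Since $W\in C^\infty(\overline{\Omega})$, the pseudodifferential commutator expansion gives, for any $M$,
\begin{equation*}
W\,A_{-\alpha}g\;=\;A_{-\alpha}(Wg)\;+\sum_{1\le|\beta|\le M}B^{(\beta)}_{-\alpha-|\beta|}\!\bigl((\partial^\beta W)\,g\bigr)\;+\;R_M\,g,
\end{equation*}
with $B^{(\beta)}_{-\alpha-|\beta|}\in\Psi^{-\alpha-|\beta|}(\Omega)$ and $R_M\in\Psi^{-N}(\Omega)$ for $N$ as large as needed. The main term is immediate: the standard mapping $A_{-\alpha}:W^{s-r}\to W^{\alpha+s-r}$ yields $\|A_{-\alpha}(Wg)\|_{W^{\alpha+s-r}}\lesssim\|Wg\|_{W^{s-r}}$, which is precisely the $r$-th summand of the right-hand norm.

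For a commutator term with $|\beta|\ge 1$, the Leibniz rule produces $\partial^\beta W$ as a sum of smooth functions times $\prod_{i\in\hat{j}}\rho_i^{(s-r)k-n_i}$ with $\sum_i n_i\le|\beta|$ and each $n_i\le|\beta|$. When $k\ge 1$ one has $(s-r)k-n_i=(s-r-|\beta|)k+(|\beta|k-n_i)$ with $|\beta|k-n_i\ge 0$, so whenever $s-r-|\beta|\ge 0$ we can factor
\begin{equation*}
\partial^\beta W\;=\;h_\beta\cdot\prod_{i\in\hat{j}}\rho_i^{(s-r-|\beta|)k}
\end{equation*}
for some $h_\beta\in C^\infty(\overline{\Omega})$. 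Combined with $B^{(\beta)}_{-\alpha-|\beta|}:W^{s-r-|\beta|}\to W^{\alpha+s-r}$, this bounds the term by the $(r+|\beta|)$-th summand of $\|g\|_{W^{0,s}(\Omega,\rho_{\hat{j}},k)}$. When $|\beta|>s-r$, one simply uses that $\partial^\beta W$ is a bounded multiplier together with the embedding $W^0\hookrightarrow W^{s-r-|\beta|}$ to reduce to the $\|g\|_{W^0}$ summand. The remainder $R_M g$ is absorbed by choosing $M$ large. The case $k=0$ reduces to the standard pseudodifferential estimate.

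The main obstacle I anticipate is simply the book-keeping: verifying that every commutator term in the expansion lands inside some summand of the weighted norm of $g$. The structural point making this work is that, with $k\ge 1$, each order of commutation costs $|\beta|$ derivatives of $W$ but leaves a clean factor of $\prod_{i\in\hat{j}}\rho_i^{(s-r-|\beta|)k}$, which is exactly the weight appearing in the $(r+|\beta|)$-th summand of the weighted norm on the right-hand side, making the match term-by-term.
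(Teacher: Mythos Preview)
Your proposal is correct and follows essentially the same approach as the paper: the paper does not give a self-contained proof of this statement (it is quoted from \cite{Eh18_halfPlanes}), but it remarks that the proof ``can be read from'' the proof of Theorem~\ref{ThetaInt}, which proceeds exactly as you do---commute the power of $\rho_{\hat{j}}$ past the pseudodifferential operator, obtaining a sum $\sum_{l\le r}\Psi^{-\alpha-(r-l)}\bigl(\rho_{\hat{j}}^{l k}g\bigr)$, and match each term against a summand of the weighted norm of $g$. The only cosmetic difference is that the paper writes the commutation directly in terms of powers of $\rho_{\hat{j}}$ rather than the full multi-index expansion in $\partial^\beta W$, and it makes explicit the extension-by-zero convention $g\mapsto g^{E_I}$ (which is the paper's standing convention for how $A_{-\alpha}$ acts on functions on $\Omega$).
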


 We will later consider operators
which are matrices composed of 
 operators of the various above types.
Suppose, $M$ is an $n\times n$ matrix operator
 and $u$ is a vector,
\begin{equation*}
\left[ 
 \begin{array}{c}
   u_1 \\
   \vdots\\
   u_n
 \end{array}
\right]
\end{equation*}
 with
$u_j\in H_j$ for some (weighted Sobolev)
 space denoted $H_j$.  
Then we write
\begin{equation*}
 M: H_1 + \cdots+ H_n \rightarrow
  H_1' + \cdots +H_n',
\end{equation*}
where the $H_j'$ denote
  some (weighted Sobolev)
 spaces,
 to mean the $j^{th}$ component of 
$Mu$, or $(Mu)_j$ satisfies
\begin{equation*}
 \|(Mu)_j\|_{H_j'} \lesssim 
  \sum_k \|u_k\|_{H_k}.
\end{equation*}
In the case the $H_j$ 
 are all the same, we will omit the
summation signs.

Furthermore, regarding pseudodifferential 
operators, we will use the notation
$\Psi^{\alpha}_{\varepsilon}$ to refer to
pseudodifferential operators with small operator
norm, by which we mean, given some sufficiently small
neighborhood $U$, we have
\begin{equation}
\label{smallOps}
\| \Psi^{\alpha}_{\varepsilon} v \|_s \le \varepsilon \| v\|_{s+\alpha}
\end{equation}
for all $s\ge 0$ and for all $v$ with support in $U$. 

In Sections \ref{secBndryEqns} and
 \ref{secWeightedEst} we discuss 
the boundary equations related to the 
 \dbar-Neumann problem, and in obtaining estimates
for the solutions to given boundary equations, we
 isolate a particular problematic direction in
which to obtain a gain of regularity
 (in a weighted sense).  This is a normal
phenomenon in the analysis in the theory of
 the \dbar-Neumann problem in which certain
  operators behave as elliptic operators with 
the exception of their behavior in one particular
 microlocal region.  To describe this problematic
region we recall a microlocal decomposition as 
 given in \cite{Ch91, KoNi06, K85, Ni06}.  We describe the situation in 
  $\mathbb{R}^3$ (considered as the boundary
of a half-plane in $\mathbb{R}^4$).

We choose a smooth partition of 
 the two dimensional unit sphere $|\xi|=1$ 
with functions $\psi_k^+$, $\psi_k^0$, and
 $\psi_k^-$, with dependence on a 
  parameter $k$, in such a way that 
$\psi^+_k$ has support in 
 $\xi_3 > k \sqrt{\xi_1^2+\xi_2^2}$ and is
equal to 1 when
 $\xi_3 > (k+1) \sqrt{\xi_1^2+\xi_2^2}$.
$\psi^-_k$ is defined symmetrically, so that
 $\psi^-_k(\xi_1,\xi_2,-\xi_3) 
  = \psi^+_k(\xi_1,\xi_2,\xi_3)$,
and finally, 
 $\psi^+_k + \psi^0_k + \psi^-_k =1$ on the
unit sphere.

The functions are then extended to all of 
 $\mathbb{R}^3$ in the following way.  First,
they are extended radially 
 (so they are symbols of zero order pseudodifferential
operators) to everywhere
 outside a neighborhood
 of the origin.  A cutoff equivalently equal to
  1 in a neighborhood of the origin is then
included in (an extension of) the $\psi^0_k$ function
 so that on $\mathbb{R}^3$ we have a smooth
partition of unity from three order 0 symbols.  
 We refer the reader to the above mentioned 
papers for more details of the decomposition.  

\section{Setup of the \dbar-Neumann problem}
\label{secSetup}
While our final results are stated in the
 case of $n=2$, 
we can set up the \dbar-Neumann problem on
 intersection domains in $\mathbb{C}^n$
for $n\ge 2$.
 As in the Introduction, we set
 $\Omega = \bigcap_{i=1}^m\Omega_i$
where the $\Omega_i\subset\mathbb{C}^n$
 are smoothly bounded strictly
pseudoconvex domains
 intersecting real transversely.

The operator, $\square$, is defined according to
\begin{equation*}
\square=
\mdbar\mdbar^{\ast} + \mdbar^{\ast}\mdbar,
\end{equation*}
and for 
$f$ a $(0,q)$-form with components 
 in $L^2(\Omega)$, 
written $f\in L^2_{(0,q)}(\Omega)$,
the \dbar-Neumann problem is the boundary 
value problem:
\begin{equation}
 \label{dbarNEqn}
\square u = f \quad \mbox{in }\Omega
\end{equation}
with the boundary conditions
\begin{equation}
\label{bndryCndns}
\begin{aligned}
& \mdbar u \rfloor \mdbar \rho_j = 0,\\
&  u \rfloor \mdbar \rho_j = 0,
\end{aligned}
\end{equation}
on $\rho_j=0$, for $j=1,\ldots, m$.

We work in a neighborhood of
a given point, $p\in\partial \Omega$
at which all the domains intersect;
$
p\in \bigcap_{i\in I}\partial\Omega_i
$
for $I=\{1,\ldots,m\}$. 
 We will further assume that
at the point $p$ we have
\begin{equation*}
 \partial\rho_1 \wedge 
   \cdots \wedge \partial\rho_m \neq 0.
\end{equation*}
Such is the case for so-called Bell domains
 (see \cite{Ba95}), also called domains with
{\it generic corners} in \cite{CV13}.
The same procedure can be carried out for
 points at which a subset of 
domains intersect with obvious modifications.

We work with a metric so that 
$
 \omega_{1} = \partial \rho_{
	1}, \ldots, \omega_{m} = \partial \rho_{m}  
$ 
make up part of an orthonormal frame of
$(1,0)$-forms in a neighborhood of $p$.  
Let $L_1,\ldots,L_{m}$ be dual to
$\omega_1,\ldots,\omega_{m}$,
respectively.
In local coordinates we have
 the following
representations of the vector fields:
\begin{equation}
\label{lpt}
L_j = \frac{1}{\sqrt{2}}
\frac{\partial}{\partial
	\rho_{j}} +
i T_{j} \qquad 1\le j \le
m,
\end{equation}
where $T_j$ is tangential to 
 $\partial\Omega_j$,
 and in local coordinates will be written 
\begin{equation*}
T_j = \frac{\partial}{\partial x_j}
+O(\rho_j) \qquad 
1\le j \le m.
\end{equation*}

We use the convention as in
 \cite{CNS92} that the holomorphic vector fields
transverse to the boundary are written with 
positive imaginary part.

Without loss of generality we take
the singular boundary point
$p$ to be the origin.
Then, lastly, for
$j=1,\ldots, n-m$,
we write
\begin{equation}
\label{vxx}
V_{m+j}=\frac{1}{2}\left(
\frac{\partial}
{\partial x_{m+2j-1}}
-i \frac{\partial}
{\partial x_{m+2j}}
\right) +\sum_{k=1}^{2n-m}
\ell^{m+j}_k(x)
\frac{\partial}{\partial x_k} 
+\sum_k O(\rho_k).  
\end{equation}
%

We use the standard notation of forms with
indices, so that
\begin{equation*}
\omega_J = \omega_{j_1}\wedge
\cdots \wedge\omega_{j_{|J|}}.
\end{equation*}
Let $f$ be of the form $f_J\omegab_J$,
for a single index $J$.  If we can solve
the \dbar-Neumann problem for all $f$ of the form
$f=f_J\omegab_J$, for a single index, $J$,
 we can solve the
problem for any $(0,|J|)$-form.  We thus
look at
\begin{equation}
\label{interiorCndnJ}
\bar {\partial} \bar {\partial}^{\ast}u + 
\bar {\partial}^{\ast}
\bar{\partial}u=f_J\omegab_J.
\end{equation}
We want to find $u_K$ with $|K|=|J|$ so that
$u=\sum_K u_K \omegab_K$ gives the solution
to problem \eqref{interiorCndnJ} with
the boundary conditions in 
\eqref{bndryCndns}.

 In \cite{Eh18_dno} (Proposition
  3.1), we obtained an
expression for the
 $\omegab_J$ component of 
$\square u_K \omegab_K$ for any
 $K$ for which $|K| = |J|$ and
as sets $K\setminus J$ contains at most
 one index.  
As in \cite{Eh18_dno} we use the notation
 $c_J^{J'}$ to be the function which
satisfies
\begin{equation}
 \label{cJSet}
 \mdbar \omegab_{J'} = c_J^{J'} 
  \omegab_{J},
\end{equation} 
modulo forms orthogonal to 
 $\omegab_J$.
Note that with the above notation
 $|J| = |J'| +1$.  We also write
$d_j$ for functions arising in 
 integration by parts involving the
$\Lb_j$ fields: formally,
\begin{equation*}
 (\phi, \Lb_j \phi) = \big( 
 (-L_j + d_j)\phi,\phi
  \big),
\end{equation*}
where $\phi$ is a smooth function
 supported away from the boundary, $\partial\Omega$. 
 
With slight modification to accommodate
our notation of the fields 
$L_j$ and $V_j$, Proposition 3.1
of \cite{Eh18_dno}
 yields in the present situation
\begin{prop}
	\label{squareOp}
	Modulo the vector fields
	$V_j$ or $\Vb_j$ for 
	$j=m+1,\ldots n$,
	zero order terms, or forms orthogonal to 
	$\omegab_J$, we have
\begin{align*}
& i)\ \square \left(
	u_{J}  \omegab_{J}
	\right) =
	-\left(\sum_{l\in J \atop 1\le l \le m} \Lb_l L_l +
	\sum_{l\in J \atop m+ 1\le l \le n}\Vb_l V_l\right) u_{J}\omegab_J\\
&\qquad \qquad
	-\left( \sum_{l\notin J\atop 1\le l \le m}  
	L_l\Lb_l +\sum_{l\notin J\atop m+1\le l \le n} 
	  V_l\Vb_l \right) u_{J}\omegab_J
	\\
&\qquad \qquad
	+
	\sum_{k=1}^{m} \left[
	(-1)^{|J\cup \{ k\}|}  \left(
	c^{J\setminus \{k\}}_{J\setminus \{k\} \cup \{ k\}} L_k
	-\overline{c}^{J\setminus \{k\}}_{
		J\setminus \{k\} \cup \{ k\}} \Lb_k
	\right) 
	+  d_k
	\Lb_k \right] u_J \omegab_J
	\\
& ii)\ \square u_{J_{\hat{k}}l}
	\omegab_{J_{\hat{k}}\cup\{l\}}=
	- \varepsilon^{lJ_{\hat{k}} }_{J_{\hat{k}}\cup \{l\}} \varepsilon^{kJ_{\hat{k}} }_{J}
	[ \Wb_k, W_l]
	u_{J_{\hat{k}}l}\omegab_J,
\end{align*}
	where
	$W_j=L_j$ for 
	$j=1,\ldots, m$, and
	$W_j=V_j$ for $j=m+1,\ldots, n$.
\end{prop}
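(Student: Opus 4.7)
The plan is to compute $\square u$ directly in the orthonormal frame $\omega_1,\ldots,\omega_n$ and extract the $\omegab_J$ component, systematically tracking which terms fall into the three categories that we are allowed to discard (tangential fields $V_j,\Vb_j$ for $j>m$, zero-order terms, forms orthogonal to $\omegab_J$). Throughout, I will use the formulas
\begin{equation*}
\mdbar(u_K\omegab_K)=\sum_j (\Wb_j u_K)\,\bar\omega_j\wedge\omegab_K+u_K\,\mdbar\omegab_K,
\qquad \mdbar\omegab_{K'}=\sum_{|K|=|K'|+1} c^{K'}_{K}\omegab_{K}\ (\text{mod }\omega\text{'s}),
\end{equation*}
and the integration by parts identity $(\phi,\Wb_j\psi)=((-W_j+d_j)\phi,\psi)$, which gives the formal adjoint of $\mdbar$:
\begin{equation*}
\mdbar^{\ast}(v_K\omegab_K)=-\sum_{l\in K}\varepsilon^{lK_{\hat l}}_{K}(W_l v_K)\,\omegab_{K_{\hat l}}+(\text{zeroth order in }v_K).
\end{equation*}

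For part (i), I start with $u=u_J\omegab_J$. Applying $\mdbar^{\ast}$ and then $\mdbar$ and keeping only the $\omegab_J$ component, the principal contribution arises when the index wedged in by $\mdbar$ matches the index removed by $\mdbar^{\ast}$; this gives $-\sum_{l\in J}\Wb_l W_l u_J$, which specializes to the first sum of (i) once one separates $l\le m$ from $l>m$ (replacing $W$ by $L$ or $V$). Dually, $\mdbar^{\ast}\mdbar(u_J\omegab_J)$ yields $-\sum_{l\notin J}W_l\Wb_l u_J$ on the $\omegab_J$ component. The correction terms in the last line of (i) come from two places: the zeroth-order contribution of $\mdbar^{\ast}$ picks up a factor $d_k\Lb_k$ after commuting $L_k$ past the metric coefficient, and the term $u_J\,\mdbar\omegab_{J\setminus\{k\}}$ that appears when $\mdbar^{\ast}$ removes an index $k$ and then $\mdbar$ is applied produces exactly $c^{J\setminus\{k\}}_{J\setminus\{k\}\cup\{k\}}L_k$ (and its conjugate from the opposite ordering), with sign $(-1)^{|J\cup\{k\}|}$ from the antisymmetry of the wedge. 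Everything involving $V_j,\Vb_j$ with $j>m$ in these correction terms is discarded by hypothesis.

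For part (ii) the computation is parallel but now the input form $u_{J_{\hat k}l}\omegab_{J_{\hat k}\cup\{l\}}$ has an index set that differs from $J$ by one swap ($l$ for $k$). The $\omegab_J$ component of $\mdbar\mdbar^{\ast}$ picks out the contribution in which $\mdbar^{\ast}$ removes the $l$-slot (producing $W_l u$) and $\mdbar$ re-inserts a $\bar\omega_k$, yielding $\pm\,\Wb_k W_l u_{J_{\hat k}l}$; dually, $\mdbar^{\ast}\mdbar$ gives $\pm\,W_l\Wb_k u_{J_{\hat k}l}$. The sign arising from rearranging $\bar\omega_k\wedge\omegab_{J_{\hat k}}=\varepsilon^{kJ_{\hat k}}_{J}\omegab_J$ and the analogous identity for $J_{\hat k}\cup\{l\}$ gives exactly $-\varepsilon^{lJ_{\hat k}}_{J_{\hat k}\cup\{l\}}\varepsilon^{kJ_{\hat k}}_{J}$. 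Combining, the second-order parts cancel except for the commutator $[\Wb_k,W_l]$, which is the claimed expression.

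The main obstacle is bookkeeping, not analysis. Checking sign conventions on $\varepsilon^{\cdot\cdot}_{\cdot}$, the ordering of multi-indices, and verifying that all the pieces that do not fit the template of (i) or (ii) are genuinely of the discarded type (zeroth order, orthogonal to $\omegab_J$, or of the form $V_j$/$\Vb_j$ with $j>m$) is the delicate part; here the invocation of Proposition~3.1 of \cite{Eh18_dno} is essentially a way of importing the already-verified bookkeeping and adapting it to the present notation in which the fields split as $L_1,\dots,L_m$ (transverse to the various boundary components) and $V_{m+1},\dots,V_n$ (tangential to all of them), so that only the substitution $W_j\in\{L_j,V_j\}$ indicated at the end of the statement is required to finish.
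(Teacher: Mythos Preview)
Your proposal is correct and matches the paper's approach: the paper does not give an independent proof of this proposition but simply imports Proposition~3.1 of \cite{Eh18_dno}, noting that only the notational substitution $W_j\in\{L_j,V_j\}$ (splitting the frame into the transverse $L_1,\dots,L_m$ and the tangential $V_{m+1},\dots,V_n$) is needed. Your sketch of the underlying $\mdbar\mdbar^{\ast}+\mdbar^{\ast}\mdbar$ computation and the sign bookkeeping accurately describes what lies behind that citation, and your final paragraph correctly identifies that the citation is doing the heavy lifting.
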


The boundary  condition,
$ u \rfloor
\mdbar\rho_j = 0$,
for $1 \le j
\le m$ can be expressed by
\begin{equation}
\label{dirBndry1}
u_K = 0 \qquad \mbox{ for } j\in K,
\end{equation}
on $\partial\Omega_j$.   

We also define the (smooth) functions 
$c^K_{jK}$ by
\begin{equation*}
c^K_{Kj} = \mdbar \omegab_K \rfloor 
\left(
\omegab_{K} \wedge \omegab_j
\right).
\end{equation*}
 Note that this definition can
  be reconciled with that using \eqref{cJSet},
if the subscript, $Kj$, is thought of as an
 ordered set, with $j$ the last entry.
This definition eliminates much of the need for
 permutation sign functions,
$ \varepsilon^J_K $, giving the sign of 
 the permutation between 
  ordered indices, $J$ and $K$,
and defined to be zero if $J$ and $K$ do not
 contain the same indices.

We write, for fixed $j$,
\begin{align*}
\mdbar u 
\rfloor \mdbar\rho_j =&
\left(   \sum_{ K \not\owns j} \Lb_j u_K  \omegab_j
\wedge \omegab_K + 
\sum_{ K\owns j \atop {k\neq j}} \Psi_{t_{\rho_j}}^1u_{K}
\omegab_{K\cup\{k\}}
\right)
\rfloor \mdbar\rho_j\\ 
&+\sum_{j\notin K} c_{Kj}^K u_K \omegab_K
-\sum_{l\notin K} \sum_{j\in K} 
 \varepsilon^K_{(K\setminus j)j} c_{Kl}^K u_K 
 \omegab_{K\setminus j} \wedge \omegab_l
\end{align*}
where $\Psi_{t_{\rho_j}}^1$ refers to 
first order operators, tangential to
$\partial \Omega_j$.
 Then
we can use \eqref{dirBndry1}
to express the boundary condition
$ \mdbar u \rfloor \mdbar
\rho_{j} = 0$ on
$\rho_{j}=0$ by
\begin{equation}
\label{NeumannA}
\Lb_j u_K +
(-1)^{|K|}c_{Kj}^K u_K=0
\end{equation}
for $j\notin K$, and
$1\le j\le m$.

As the operator on the left-hand side of
\eqref{dbarNEqn} is elliptic, we use the 
Poisson and Green's
operators to describe
a general solution
and then insert it into the boundary conditions 
 \eqref{NeumannA}
to get the properties of the specific solution
to the \dbar-Neumann problem.  We thus first
derive some properties of the Poisson 
and Green's operators on the intersection
domains.

\section{The Poisson operator on intersection domains}
\label{secPoisson}

We will follow the
 calculus of pseudodifferential operators
in describing the Poisson operator 
 (a good reference for pseudodifferential
  operators on smooth manifolds is
 \cite{Tr}; we follow our own work in
\cite{Eh18_halfPlanes} in the presentation of 
 pseudodifferential operators on domains
  with boundary).  As we will see, the
non-smooth boundary presents problems in this
approach, and so we will need to define 
 boundary operators, which, although derived
from pseudodifferential operators, do not
lend themselves to the calculus (for example, due to
 the $\lre^{jk}_{\alpha,\gamma}$ operators which
arise below).  Nonetheless,
the Sobolev mapping properties of such problematic
 operators can be
characterized and suffice to obtain estimates
for the final solution. 

We consider the homogeneous
Dirichlet problem for the operator
$2\square$ on $\Omega$, with
prescribed boundary values, $g_b$:
\begin{equation}
 \label{homoDirProb}
\begin{aligned}
& 2\square u =0 \qquad \mbox{in } \Omega\\
& u = g_b \qquad \mbox{on }
\partial\Omega.
\end{aligned}
\end{equation}
 From \cite{JK95} (see Section 5 in
  \cite{JK95};
see also Theorem 5.1 of \cite{Ve84} and 
Theorem 1.4.3 of \cite{Sh05}), 
given $g_b\in W^{s}(\partial\Omega)$,
 for $0\le s\le 1$,
 there exists a unique solution, 
 $u\in W^{s+1/2}(\Omega)$,
to the homogeneous Dirichlet problem,
 such that $u \rightarrow g_b$ almost 
  everywhere, where the limits are taken
non-tangentially.  We call the unique solution
 $u$ to be the Poisson solution for
$g_b$ associated with the domain $\Omega$.

 We look at the Poisson solution locally, in 
a neighborhood of an intersection point on 
 the boundary, which we take to be the
origin as in Section \ref{secSetup}.  At such a
point there are several boundaries of 
 different domains, and to distinguish the 
boundary values, we use an index.
 Thus,
\begin{equation*}
   g_{bj}:= g_b \Big|_{\partial\Omega_j}.
\end{equation*}
 
 Our goal in this section is to obtain an
expression for the Poisson solution in terms
 of order $-1$ pseudodifferential operators
acting on $g_b$,
  modulo lower order error terms.  The 
 operator and its errors will be expressed
in terms of the data boundary function
 (or form), $g_b$, and the non-tangential
limits of the normal derivatives of the unique
 solution, $\partial_{\rho_j} v|_{\partial\Omega_j}$.  The operator 
mapping $g_b$ to the solution, $u$, is called
 the Poisson operator.  
 
To keep track of (smooth) error terms, we use the
 following notation:
 we write $R^{-\infty}$ to mean
 $ \Psi^{-\infty}$ applied to 
  $u$ or to 
 $g_{bj}\times \delta(\rho_j)$ or to 
$\partial_{\rho_j}u
  \big|_{\partial\Omega_j} \times \delta(\rho_j)$.
Furthermore, related to a single boundary,
 $\partial\Omega_j$, we use the notation
$R_{bj}^{-\infty}$ to denote 
 $ \Psi^{-\infty}_{bj}$ applied to 
 $g_{bj}$ or to 
 $\partial_{\rho_j}u
 \big|_{\partial\Omega_j} $,
and also to denote
 $R_j$ composed with a term from $R^{-\infty}$:
$ R_j\circ R^{-\infty}=R_{bj}^{-\infty}$.
 And finally,
we also write $R^{-\infty}$ to include
any term which can be written in the form
\begin{equation*}
\Psi^{-\alpha} \left(R_{bj}^{-\infty}
 \times \delta(\rho_j) \right)
\end{equation*}
for $\alpha\ge 1$, where the 
$\Psi^{-\alpha}$ is decomposable.

We note that for estimates of the smooth terms, 
 using Theorems \ref{thrmWghtdDecomp} and
  \ref{PsiInt}, we have
 for any $\alpha,s,k \ge0$
\begin{align*}
\nonumber
  \| R^{-\infty} \|_{W^{\alpha,s}(\Omega, \rho,k)} 
 \lesssim& 
 \|u\|_{W^{-\infty ,s}(\Omega, \rho,k)}
  + \sum_j \|g_{bj}\|_{W^{-\infty ,s}
  	 (\partial \Omega \cap \partial\Omega_j, \rho_{\hat{j}},k)}
   \\
\nonumber
  &+ \sum_j \|\partial_{\rho_j}u
  \big|_{\partial\Omega_j}
  \|_{W^{-\infty ,s}
  	(\partial \Omega \cap \partial\Omega_j, \rho_{\hat{j}},k)}\\
\nonumber
\lesssim& 
\sum_j \|g_{bj}\|_{L^2
	(\partial \Omega \cap \partial\Omega_j)} +
\sum_j \|\partial_{\rho_j}u
\big|_{\partial\Omega_j}
\|_{W^{-\infty}
	(\partial \Omega \cap \partial\Omega_j)}.
\end{align*}

We can estimate boundary values of 
 a term, $\partial_{\rho_j}u
 \big|_{\partial\Omega_j}$ by 
  assuming support in a neighborhood of
   $\partial\Omega$ intersected with $\Omega$
 and writing
\begin{align*}
 \partial_{\rho_j}u
 \big|_{\rho_j =0 } 
  =& \int_{-\infty}^0 \partial_{\rho_j}^2 u d\rho_j \\
  =& \int_{-\infty}^0 \Lambda_{t_j}^2 u
    d\rho_j + \Lambda_{t_j}^1 g_{bj},
\end{align*}
where $\Lambda_{t_j}^k$, $k=1,2$,
 is a $k^{th}$ order tangential (to 
 $\partial\Omega_j$) operator.
 Applying a tangential smoothing operator to
  both sides and integrating yields
\begin{align*}
 \|\partial_{\rho_j}u
 \big|_{\partial\Omega_j}
 \|_{W^{-\infty}
 	(\partial \Omega \cap \partial\Omega_j)}
  \lesssim& \|u\|_{L^2(\Omega)}
  + \|g_b\|_{L^2(\partial\Omega)}\\
  \lesssim& \|g_b\|_{L^2(\partial\Omega)}.
\end{align*}

We thus have
\begin{equation*}
\label{inftyEst}
\| R^{-\infty} \|_{W^{\alpha,s}(\Omega, \rho,k)} 
\lesssim  \|g_b\|_{L^2(\partial\Omega)}.
\end{equation*}
Similarly, we have
\begin{align*}
 \| R_{bj}^{-\infty} \|_{W^{\alpha,s}
	(\partial \Omega \cap \partial\Omega_j, 
	\rho_{\hat{j}},k)} 
\lesssim& \|g_b\|_{L^2(\partial\Omega)}
.
\end{align*}

To obtain our expression for 
 the Poisson solution, we
 assume $u$ is supported in 
a small neighborhood of the origin in
 $\Omega$ (we can multiply
  the solution with a smooth cutoff function)
 with boundary values
(as non-tangential limits) given by
 $g_b$ (also with compact support),
  and study the operator
$2\square$ applied to 
 (the cutoff multiplied by) $u$.
 
We use extensions by zero to consider 
 $\square u $ on all of $\mathbb{R}^n$.
Similarly the result of 
 other operators applied to $u$ will be extended
by zero when taking Fourier Transforms.  
Thus, writing 
 $g_{b}:= u|_{\partial\Omega}$ and
 $g_{bj}:= u_{bj}$, we have
\begin{align*}
	\nonumber \widehat{
\frac{\partial^2 u}{\partial \rho_j^2}}
 (\eta,\xi) =&
	\int\int_{-\infty}^0 
\frac{\partial^2 u}{\partial \rho^2_j} 
	e^{-i\rho_j\eta_j}d\rho_j
 e^{-i\rho_{\hat{j}} \cdot
  \eta_{\hat{j}}}e^{-ix\xi} 
d\rho_{\hat{j}} dx\\
	\nonumber
=& F.T._{\hat{j}}
 \frac{\partial u}{\partial \rho_j}
	(0_j,\eta_{\hat{j}},\xi) +i\eta_j
 F.T._{\hat{j}}u (0_j,\eta_{\hat{j}},\xi)
	-\eta^2_j \widehat{u}(\eta,\xi)\\
	\label{2ndder}
=& \Bigg(\left.\frac{\partial u}
 {\partial \rho_j}\right|_{\rho_j=0}
	\times \delta(\rho_j) \widehat{\Bigg)}
+i\eta_j \Bigg( g_{bj}\times
  \delta(\rho_j)\Big)\widehat{\Bigg)} -\eta_j^2
	\widehat{u}(\eta, \xi),\\
	\nonumber
\widehat{\frac{\partial u}
	 {\partial \rho_j}}(\eta, \xi)
=&F.T._{\hat{j}} u (0_j,\eta_{\hat{j}},\xi)
  +i\eta_j \widehat{u}(\eta,\xi)\\
\nonumber =& \Big( g_{bj}\times
 \delta(\rho_j)\widehat{\Big)} +
	i\eta_j \widehat{u}(\eta,\xi).
\end{align*}
 Above, $u$ is also extended by zero
  in writing 
 Fourier Transforms (and partial transforms) of 
  $u$.
For instance, a zero order 
pseudodifferential 
 operator acting on $\frac{\partial u}
  {\partial{\rho_j}}$ 
  can
be written in the form
\begin{equation}
 \label{pseudoOnDer}
\Psi^1 u + 
 \Psi^0 \left(g_{bj} \times \delta_j \right),
\end{equation}
where $\delta_j:= \delta(\rho_j)$.

We consider the above expressions in the sense of
 distributions (see for example \cite{BN08}
for details of 
 the Dirichlet problem in the sense of distributions), and one of our first tasks in obtaining estimates for
the solution will be to obtain a formula and estimates
 in the sense of distributions for the boundary 
  values of the derivative.

We recall the expressions in local coordinates
 of the vector fields dual to the
$\omega_j$ forms to write their symbols 
 according to the convention that
$\eta_j$ is the variable dual to
 $ \rho_j$ for $1\le j \le m$,
and $\xi_j$ is the variable dual to
 $x_j$,
for $1\le j \le  2n-m$. 
The symbols of the 
 vector fields $L_j$ and $V_j$
 are given by
\begin{align*}
&\sigma(L_j) = \frac{1}{\sqrt{2}}
i\eta_{j} 
- \xi_{j}
 + O(\rho_j)
\qquad 1\le j \le m\\
& \sigma
\left(
V_{m+j}
\right)
 = \frac{1}{2}\left(
  i\xi_{m+2j-1}
  +\xi_{m+2j}\right)
 +O(x)  +\sum_k O(\rho_k)
 \qquad 1\le j \le n-m
.
\end{align*} 
  We then have
  as principal symbol
of the second order operator in 
Proposition \ref{squareOp}
\begin{align}
\nonumber
-\sigma_2&\left(L_1\Lb_1 + \cdots +
L_{m}\Lb_{m}
+ V_{m+1}\Vb_{m+1} + \cdots V_n\Vb_n\right)\\
\nonumber
 &\qquad = \frac{1}{2} 
\left(
	\eta_1^2 + \cdots + \eta_m^2  
\right)
 + \xi_{1}^2 + \cdots +\xi_{m}^2
 + \frac{1}{4} \left(
 \xi_{m+1}^2 + \cdots +\xi_{2n-m}^2 
\right)\\
 \label{symSec}
&\qquad \quad
+
 \sum l_{jk}(x)
\xi_j
 \xi_k +\sum_j O(\rho_j),
\end{align} 
where 
 $l_{jk}$ is $O(x)$.

We use the vector notation of forms
 where each component of a $(0,q)$-form
corresponds to an entry of an
 $\binom{n}{q}$ vector.  Symbols and
differential operators will accordingly be 
 matrices.  Thus, a symbol such as
$\sigma_2\left(
 L_1\Lb_1 + \cdots + V_n\Vb_n
\right)$ above is a matrix with
 diagonal
 entries given by
the right hand side of \eqref{symSec}.

 We also use the notation
$\partial_{\rho_j}u$ to denote
\begin{equation*}
\frac{\partial}{\partial \rho_j}
\left( \sum_{|K|=q} u_K\omegab_K 
\right)
\end{equation*}
written in vector notation.
 For shorthand notation, we use
$\partial_{\rho_j} : = 
 \frac{\partial}{\partial\rho_j}$, with
a similar notation holding for the
 $x$ coordinates.

With the use of the expressions in local
 coordinates given in 
 \eqref{lpt} and
 \eqref{vxx}, we use
Proposition \ref{squareOp} to write
 $2\square u = 0$ in the local
 form
(see also \cite{Eh18_dno})
\begin{align}
\nonumber
-
 \bigg[\partial_{\rho_1}^2
 +& \cdots + \partial_{\rho_m}^2
 + 2\Big(
\partial_{x_{1}}^2 + \cdots
 +  \partial_{x_{m}}^2
\Big)\\
\label{localSAT}
 &+ \frac{1}{2}\Big(
  \partial_{x_{m+1}}^2 + \cdots
  +  \partial_{x_{2n-m}}^2
 \Big)
+ 2 \sum_{ij} l_{ij}
 \partial_{x_i}\partial_{x_j}
\bigg]u
\\
\nonumber
& + \sqrt{2}\sum_{j=1}^{m} S_j
\circ(\partial_{\rho_j} u \times \delta_j)
+ A(u)+ \sum_{j=1}^m \rho_j \tau_j u = 0,
\end{align}
where the $S_k$ operators are 
 diagonal zero order operators, arising from the
$\partial_{\rho}$ components of 
\begin{equation}
\label{forS}
 	\sum_{k=1}^{m} 
 (-1)^{|J\cup \{ k\}|}  \left(
 c^{J\setminus \{k\}}_{J\setminus \{k\} \cup \{ k\}} L_k
 -\overline{c}^{J\setminus \{k\}}_{
 	J\setminus \{k\} \cup \{ k\}} \Lb_k
 \right) 
 +  d_k
 \Lb_k,
\end{equation}
$A$ is a matrix comprised of all
 first order tangential operators
  (tangential to all boundaries simultaneously;
  note that $L_i$ is orthogonal
 to $\partial_{\rho_j}$ for $i\neq j$
 as are the $V_i$ vector fields),
and $\tau_i$ are second order operators,
 arising from the $O(\rho)$ terms in 
$  L_1\Lb_1 + \cdots + V_n\Vb_n$.
 The relation in \eqref{localSAT} is
to be understood modulo smoothing terms which
 are arise due to the local cutoffs
introduced in order to study the problem
 locally.  Thus \eqref{localSAT} holds 
in a small neighborhood contained in the 
 support of $u$, modulo $R^{-\infty}$.

 For the purposes of the Poisson 
operator we will group the $O(\rho_j)$ terms
(the last summation on the left-hand side of
 \eqref{localSAT})
 with the principal second order operator.
 Then,
using the symbols for the vector fields as
 above, and using the notation
$g_{b}:= u|_{\partial\Omega}$, 
  we write \eqref{localSAT} as
\begin{align}
\nonumber
\frac{1}{(2\pi)^{2n}}	\int \bigg(
	&\eta_1^2 + \cdots + \eta_m^2  
+2\left( \xi_{1}^2 + \cdots +\xi_{m}^2
 \right)
	+ \frac{1}{2} \left(
	\xi_{m+1}^2 + \cdots +\xi_{2n-m}^2 
	\right)\\
\nonumber
&
+
2	\sum l_{jk}
	\xi_j \xi_k
 +\sum_j O(\rho_j) \bigg)  \widehat{u}(\eta,\xi)
	e^{i \rho \cdot \eta} e^{ix\xi} d\eta  d\xi\\
\nonumber
	&-
\frac{1}{(2\pi)^{2n}}	\sum_{j=1}^m \int
	\bigg(F.T._{\hat{j}} \partial_{\rho_j}
	u(0_j,\eta_{\hat{j}},\xi) +
i\eta_j 
 \widehat{g_{bj}} (\eta_{\hat{j}},\xi) 
  \bigg)
	e^{i \rho \cdot \eta} e^{ix\xi} d\eta  d\xi\\
\label{origPoissonFourier}
&
+ \sqrt{2}\sum_{j=1}^{m} S_j
(\partial_{\rho_j} u) +	A(u)  = 0.
\end{align}

Let us define the symbols
\begin{align*}
 \Xi(x,\rho,\xi) = \bigg(
 2\left( \xi_{1}^2 + \cdots \xi_{m}^2
 \right)
 +& \frac{1}{2} \left(
 \xi_{m+1}^2 + \cdots \xi_{2n-m}^2 
 \right)\\
 & +
2 \sum l_{jk}
 \xi_j \xi_k
+\sum_j O(\rho_j)
\bigg)^{1/2},
\end{align*}
where the $O(\rho_j)$ terms come from the 
 second order operator in
  \eqref{origPoissonFourier}.
 We further use the notation
\begin{equation*}
 \Xi_{bj}(x,\rho_{\hat{j}},\xi) 
  := \Xi (x,\rho,\xi) \big|_{\rho_j =0 }.
\end{equation*}
We will also write
\begin{align*}
&\eta^2 = \eta_1^2 + \cdots + \eta_m^2\\
&\eta_{\hat{j}}^2 = 
\eta_1^2 + \cdots +\eta_{j-1}^2
+ \eta_{j+1}^2+ \eta_m^2.
\end{align*}

For ease of notation we will omit 
 the delta distributions when applying
pseudodifferential operators to distributions 
 supported on the boundaries.  Thus, for 
$\phi_b \in  L^2(\partial\Omega)$, as a 
 shorthand 
notation we will write
\begin{align*}
& \Psi^{\alpha} \phi_b := 
     \sum_j\Psi^{\alpha}\left( \phi_{bj} \times \delta_j \right),\\
 & \Psi^{\alpha} \phi_{bj} := 
  \Psi^{\alpha}\left( \phi_{bj} \times \delta_j \right).
\end{align*}

In what follows we will make repeated use of 
 the fact that multiplying an elliptic
operator of negative order acting on a
 distribution supported on a boundary 
  $\partial\Omega_j$ with a factor
$\rho_j$ yields lower order terms; thus,
 for instance, with
$\Psi^{-s}$ for $s>0$ 
 denoting a generic pseudodifferential operator
  in the class $\Psi^{-s}(\Omega)$, we have
   for $k\ge 0$
\begin{equation}
 \label{lowerOrder}
 \rho_j^k \Psi^{-s} g_{bj}
  \equiv \Psi^{-s-k} g_{bj}.
\end{equation}
See \cite{Eh18_halfPlanes} for details.
 From the same reference we have the following
restriction property of pseudodifferential
 operators acting on distributions supported on
a boundary:
\begin{equation}
\label{restrictIncrease}
 R_j\circ \Psi^{-s} u_{bj}
  = \Psi^{-s+1}_{bj} u_{bj},
\end{equation}
 for $s\ge 2$, 
where $\Psi^{-s} \in\Psi^{-s}(\Omega)$,
  $\Psi^{-s}_{bj} \in\Psi^{-s}
   (\partial\Omega_j)$, and 
$u_{bj}$ is a distribution supported on
 $\partial\Omega_j$.
 
We recall from Theorem 4.1 in
\cite{Eh18_dno},
the principal operator,
 denoted by $\Theta^+_j$, of the
 Poisson operator on the 
 (smooth) domain $\Omega_j$
(but in a neighborhood of 
 the origin, in which  
$\rho_1,\ldots,\rho_m, x_{1},
 \ldots,x_{2n-m}$ forms a coordinate 
system) has
as 
 symbol
 \begin{equation}
\label{symlam}
\sigma(\Theta^+_j):= \frac{i}
{\eta_j + i
\sqrt{\eta_{\hat{j}}^2+ 
	\Xi_{bj}^2}}.
\end{equation}
The corresponding operator maps
 $W^{s}(\partial\Omega_j)$ into 
 $W^{s+1/2}(\Omega_j)$,
modulo operators which lead to 
 errors of more smooth type, i.e. which map
$W^{s_1}(\partial\Omega_j)
  \rightarrow W^{s_2}(\Omega_j)$ 
for $s_2 > s_1+1/2$.  As we will show, the 
 same operators (for each of the domains)
arise in the Poisson operator for the 
 intersection domain.

Applying an inverse of the 
 principal second order 
elliptic operator on the left-hand side of
\eqref{origPoissonFourier}
  to both sides of \eqref{origPoissonFourier},
and recalling that
 $ S_j
 \circ(\partial_{\rho_j} u)$
can be written as in
 \eqref{pseudoOnDer},
yields
	\begin{align}
\nonumber
	u=& 
	\sum_{j=1}^m \frac{1}{(2\pi)^{2n}}\int \frac{
\partial_{\rho_j}   F.T._{\hat{j}}u (0_j,\eta_{\hat{j}},\xi)
  + i\eta_j \widehat{g_{bj}} (\eta_{\hat{j}},\xi)}
{\eta^2+ \Xi^2(x,\rho,\xi)}
	e^{i \rho \cdot \eta} e^{ix\xi} d\eta  d\xi\\
\label{errorPoisson}
	& + 
\Psi^{-3}\left(\partial_{\rho} u \big|_{\partial\Omega}
 \right) +\Psi^{-2} \left(g_b \right)+ \Psi^{-1}u.
	\end{align}
To be precise we can rewrite 
 \eqref{origPoissonFourier} 
 by adding term, $u$, to both sides and
then invert the operator with symbol
 $1+\eta^2+\Xi^2$ in order to avoid complications
with zeros in the denominators of the symbols of
 inverse operators.  If we consider then 
a resulting term of the form
\begin{equation*}
 \int \frac{
 	\widehat{h_{bj}} (\eta_{\hat{j}},\xi)}
 {1+\eta^2+ \Xi^2(x,\rho,\xi)}
 e^{i \rho \cdot \eta} e^{ix\xi} d\eta  d\xi,
\end{equation*}
we can 
integrate over the 
 $\eta_j$ variable using the residue 
at $\eta_j = i\sqrt{1+\eta_{\hat{j}}^2+\Xi^2}$.
 Alternatively, set
$\chi_j (\eta_{\hat{j}},\xi) \in C^{\infty}_0
 (\mathbb{R}^{2n-1})$ such that
$\chi_j \equiv 1$ near the origin, and set
 $\chi_j' = 1-\chi_j$.
 We could then use an expansion
\begin{align*}
 \frac{1}{1+\eta^2+ \Xi^2}
   =& \frac{\chi_j(\eta_{\hat{j}},\xi)}
    {1+\eta^2+ \Xi^2}
    + \frac{\chi_j'(\eta_{\hat{j}},\xi)}
    {1+\eta^2+ \Xi^2}\\
   =& \frac{\chi_j(\eta_{\hat{j}},\xi)}
   {1+\eta^2+ \Xi^2}
   + \frac{\chi_j'(\eta_{\hat{j}},\xi)}
   {\eta^2+ \Xi^2} + \cdots,
\end{align*}
where the remainder terms are symbols in class
 $\lrs^{-3}(\Omega)$.
The term
\begin{equation}
\label{smthCutoff}
\int \chi_j(\eta_{\hat{j}},\xi) \frac{
	\widehat{h_{bj}} (\eta_{\hat{j}},\xi)}
{1+\eta^2+ \Xi^2(x,\rho,\xi)}
e^{i \rho \cdot \eta} e^{ix\xi} d\eta  d\xi
\end{equation}
is smoothing (on $\partial\Omega_j$), which can be seen by integrating 
 over the $\eta_j$ variable, using the residue calculus,
while the term
\begin{equation*}
\int \chi_j'(\eta_{\hat{j}},\xi) \frac{
	\widehat{h_{bj}} (\eta_{\hat{j}},\xi)}
{\eta^2+ \Xi^2(x,\rho,\xi)}
e^{i \rho \cdot \eta} e^{ix\xi} d\eta  d\xi
\end{equation*}
can also be analyzed using the residue calculus
 without any resulting singular terms.  
 We will implicitly adopt this approach in what follows, 
but for simplicity we will omit the $\chi_j'$ factors.
 In our use of symbols which are singular at the origin,
we can use the above approach to reduce the application of
 such symbols to distributions which vanish at the
  singularities.  

	We now return to 
\eqref{errorPoisson}.  
Expanding $\Xi^2(x,\rho,\xi)$ in each
 $\rho_j$, we get 
	\begin{align}
\nonumber
	u=& 
	\sum_{j=1}^m \frac{1}{(2\pi)^{2n}} \int \frac{
  F.T._{\hat{j}}  \partial_{\rho_j} 
	u(0_j,\eta_{\hat{j}},\xi) +
i\eta_j \widehat{g_{bj}}
 (\eta_{\hat{j}},\xi)}
 {\eta^2 +\Xi_{bj}^2}
	e^{i \rho \cdot \eta} e^{ix\xi} d\eta  d\xi\\
\nonumber
	& + \Psi^{-1} u +
\sum \rho_j
 \Psi^{-2}\left(\partial_{\rho_j}
  u \big|_{\partial\Omega_j}
 \right) +
\sum \rho_j \Psi^{-1} \left(g_{bj} \right)\\
\nonumber
&+
	\Psi^{-3}\left(\partial_{\rho}
	 u \big|_{\partial\Omega}
	  \right) +\Psi^{-2} \left(g_{b}\right).
	\end{align}

Now, using the property stated 
 in 
\eqref{lowerOrder} above
of 
$\rho_j$ multiplied with an elliptic operator,
we can solve for $u$ and get
\begin{align}
\nonumber
u=& 
\sum_{j=1}^m \frac{1}{(2\pi)^{2n}} \int \frac{
	F.T._{\hat{j}} \partial_{\rho_j}   
	u(0_j,\eta_{\hat{j}},\xi) +
	i\eta_j \widehat{g_{bj}}
 (\eta_{\hat{j}},\xi)}
{\eta^2 + \Xi_{bj}^2}
e^{i \rho \cdot \eta} e^{ix\xi} d\eta  d\xi\\
& 
 \label{vPoissonBasic}
+
\Psi^{-3}\left(\partial_{\rho}
u \big|_{\partial\Omega}
\right) +\Psi^{-2} \left(g_{b}\right)+
 \Psi^{-\infty} u.
 \end{align}

Taking limits as
	$\rho_k \rightarrow 0^+$, 
and using \eqref{restrictIncrease}, we get
\begin{align*}
	0=& 
	\frac{1}{(2\pi)^{2n}} \int \frac{
 F.T._{\hat{k}} \partial_{\rho_k}
  u(0_k,\eta_{\hat{k}},\xi) +
i\eta_k \widehat{g_{bk}}
  (\eta_{\hat{k}},\xi)}
{\eta^2 +
		\Xi_{bk}^2}
	e^{i \rho_{\hat{k}} \cdot \eta_{\hat{k}}}
 e^{ix\xi} d\eta  d\xi\\
& + \frac{1}{(2\pi)^{2n}}
 \sum_{j\neq k}  \int \frac{
  F.T._{\hat{j}}\partial_{\rho_j} 
  u (0_j,\eta_{\hat{j}},\xi) +
		i\eta_j \widehat{g_{bj}}
(\eta_{\hat{j}},\xi)}
 {\eta^2 +
		\Xi_{bj}^2}
	e^{i \rho_{\hat{k}} \cdot \eta_{\hat{k}}}
 e^{ix\xi} d\eta    d\xi\\
	&  
	 +
 \Psi^{-2}_{bk}\left(\partial_{\rho_k} 
 u \big|_{\partial\Omega_k} 
 \right)
+
\Psi^{-1}_{bk}\left(g_{bk}\right)
 + R_{bk}^{-\infty}\\
 &+\sum_{j\neq k} R_k \circ 
  \Psi^{-3}\left(\partial_{\rho_j}
u \big|_{\partial\Omega_j} \right) +
 \sum_{j\neq k}
 R_k \circ \Psi^{-2} \left(g_{bj} \right).
\end{align*}

	We perform
integrations in the $\eta$ variables
  in the integrals in \eqref{vPoissonBasic}
 for $\rho_k>0$ 
  above and then let $\rho_k\rightarrow 0^+$:
\begin{align}
\nonumber
	0=& 
\frac{i}{(2\pi)^{2n-1}} \int \frac{
		   F.T._{\hat{k}} \partial_{\rho_k}
 u(0_k,\eta_{\hat{k}},\xi) -
		\sqrt{\eta_{\hat{k}}^2+ 
\Xi_{bk}^2}  \widehat{g_{bk}}
 (\eta_{\hat{k}},\xi)}
{2i\sqrt{\eta_{\hat{k}}^2+ 
		 \Xi_{bk}^2} }
	e^{i \rho_{\hat{k}} \cdot \eta_{\hat{k}}} e^{ix\xi} 
	d\eta_{\hat{k}}  d\xi\\
\nonumber
	& + 
\frac{i}{(2\pi)^{2n-1}}
 \sum_{j\neq k}  \int \frac{   
 	 F.T._{\hat{j}} \partial_{\rho_j} 
u(0_j\eta_{\hat{j}},\xi)
	+
	\sqrt{\eta_{\hat{j}}^2+
		 \Xi_{bj}^2} 
 \widehat{g_{bj}}(\eta_{\hat{j}},\xi)}
	{2i\sqrt{\eta_{\hat{j}}^2+ 
			\Xi_{bj}^2}} \times\\
\nonumber
&\qquad \qquad \qquad \qquad \qquad \qquad
\qquad \qquad\qquad \qquad
e^{ \rho_{j} \sqrt{
 \eta_{\hat{j}}^2+ \Xi_{bj}^2}}	
	e^{i \rho_{\hat{k}} \cdot \eta_{\hat{k}}} e^{ix\xi} 
	d\eta_{\hat{j}}    d\xi\\
\nonumber
	&  
+
\Psi^{-2}_{bk}\left(\partial_{\rho_k} u \big|_{\partial\Omega_k} \right)
+
\Psi^{-1}_{bk}\left(g_{bk}  \right)
+ R_{bk}^{-\infty}\\
 \label{intCross}
&+\sum_{j\neq k} R_k \circ 
\Psi^{-3}\left(\partial_{\rho_j}
u \big|_{\partial\Omega_j} \right) +
\sum_{j\neq k}
R_k \circ \Psi^{-2} \left(g_{bj} \right).
\end{align}

 We note that the terms,
\begin{equation*}
 \int \frac{
 	 F.T._{\hat{j}}\partial_{\rho_j} u
 	   (0_j, \eta_{\hat{j}},\xi)
 	+
 	\sqrt{\eta_{\hat{j}}^2+
 		\Xi_{bj}^2} 
 	\widehat{g_{bj}}(\eta_{\hat{j}},\xi)}
 {2i\sqrt{\eta_{\hat{j}}^2+ 
 		\Xi_{bj}^2}}
 e^{ \rho_{j} \sqrt{\eta_{\hat{j}}^2+ \Xi_{bj}^2}}	
 e^{i \rho_{\hat{k}} \cdot \eta_{\hat{k}}} e^{ix\xi} 
 d\eta_{\hat{j}}    d\xi
 \end{equation*}
can be thought of as mappings from 
 distributions on 
$\partial\Omega_j$ to distributions on
  $\partial\Omega_k$.  Using our notation
from Section \ref{secNotation},
  we will write the operators defined by
\begin{equation*}
  h|_{\rho_j=0}  
  \mapsto
  \int \frac{
  	h(0_j,\eta_{\hat{j}},\xi)	
  }
  {\sqrt{\eta_{\hat{j}}^2+ \Xi_{bj}^2}}
  e^{ \rho_{j} \sqrt{\eta_{\hat{j}}^2+ 
  		\Xi_{bj}^2}}	
  e^{i \rho_{\hat{k}} \cdot \eta_{\hat{k}}} e^{ix\xi} 
  d\eta_{\hat{j}}    d\xi
\end{equation*}
  as 
 $\lre^{jk}_{-3/2}\left(h|_{\rho_j=0} \right)$.
We also note the 
 $ \Psi^{-3}$ 
 and $ \Psi^{-2}$ operators stem from a 
symbol expansion of the inverse operator
 to the principal operator on the left-hand side of 
\eqref{origPoissonFourier},
 and so are decomposable.
Then, from Section \ref{secNotation}, we 
 have
\begin{align}
\label{min52}
& R_k \circ 
 \Psi^{-3}\left(\partial_{\rho_j}
 u \big|_{\partial\Omega_j}\right)
 = \lre^{jk}_{-5/2}\left(
 \partial_{\rho_j} u \big|_{\partial\Omega_j}
 \right)\\
 \nonumber
& 
 R_k \circ \Psi^{-2} \left(g_{bj} \right)
= \lre^{jk}_{-3/2}\left( g_{bj}
\right),
\end{align}
for $j\neq k$.

We define an operator,
$\Gamma^{\sharp}$ by the symbol
\begin{equation*}
\sigma\left(
\Gamma^{\sharp}
\right)
= \frac{1}{\eta^2+\Xi^2(x,\rho,\xi)},
\end{equation*}
and $\Gamma^{\sharp}_{j} = 
 \Gamma^{\sharp} \Big|_{\rho_j=0}$.
Let us also define the operators 
 $|D_{bj}|$ by the symbols
\begin{equation*}
\sigma\left(
 |D_{bj}|
\right) = 
 \sqrt{\eta_{\hat{j}}^2+ \Xi_{bj}^2}.
\end{equation*}
From \eqref{intCross}
we can now write
\begin{align*}
	0=& \frac{1}{2}\frac{1}{(2\pi)^{2n-1}} \int \frac{
 F.T._{\hat{k}}
 \partial_{\rho_k} u(0_k,\eta_{\hat{k}},\xi)}
{\sqrt{\eta_{\hat{k}}^2+ \Xi_{bk}^2} } 
	e^{i \rho_{\hat{k}} \cdot \eta_{\hat{k}}}
 e^{ix\xi} d\eta
	d\xi\\
&- \frac{1}{2}
 \frac{1}{(2\pi)^{2n-1}} \int 
  g_{bk}(\eta_{\hat{k}},\xi)
e^{i \rho_{\hat{k}} \cdot \eta_{\hat{k}}}
  e^{ix\xi} d\eta\\
	&+ \sum_{j\neq k}
R_k \circ \Gamma^{\sharp}_{j}
 \left(
 \partial_{\rho_j} u 
\big|_{\partial\Omega_j}
 + |D_{bj}|g_{bj}
 \right) \\
&+\sum_{j\neq k}
\lre_{-5/2}^{jk}\left(\partial_{\rho_j} u
\big|_{\partial\Omega_j} \right)
+\sum_{j\neq k} \lre_{-3/2}^{jk}\left(g_{bj} \right)
	\\	&  
	+
\Psi^{-2}_{bk}\left(\partial_{\rho_k} u 
 \big|_{\partial\Omega_k}\right)
	+
\Psi^{-1}_{bk}\left(g_{bk} \right)
	+ R_{bk}^{-\infty}.
\end{align*}

We now 
solve for 
$\partial_{\rho_k}u(0_k,\rho_{\hat{k}},\xi)$
 by inverting the operator
with symbol
$1/ \left(2 
 \sqrt{\eta_{\hat{k}}^2+ \Xi^2_{bk}} \right)$.
Note that 
 the $\lre_{-\alpha}^{jk}$ terms above
 are of the form $R_k \circ A_{-(\alpha+1/2)}$
 where $A_{-(\alpha+1/2)}$ is decomposable.  Thus,
$|D_{bj}| \circ \lre_{-\alpha}^{jk}
 = \lre_{-\alpha+1}^{jk}$
for the  $\lre_{-\alpha}^{jk}$ above.  
We have
\begin{align}
\nonumber
	\partial_{\rho_k}   u \big|_{\rho_k =0}
	=& |D_{bk}| g_{bk}
-2 \sum_{j\neq k} 
 |D_{bk}| \circ R_k \circ \Gamma^{\sharp}_j
\left(
\partial_{\rho_j} u 
\big|_{\partial\Omega_j}
+ |D_{bj}|g_{bj}
\right)
\\
\nonumber
& 
 +\sum_{j\neq k} \lre_{-3/2}^{jk} 
 \left(
 \partial_{\rho_j} u \big|_{\partial\Omega_j}
 \right) 
 + \sum_{j\neq k}
\lre_{-1/2}^{jk}  g_{bj}\\
 \label{derivToIterate}
&
	+
\Psi^{-1}_{bk}\left(\partial_{\rho_k} u 
\big|_{\partial\Omega_k}\right)
+
\Psi^{0}_{bk}\left(g_{bk} \right)
+ R_{bk}^{-\infty} .
\end{align}

We now iterate 
\eqref{derivToIterate}
to get
\begin{align}
\nonumber
\partial_{\rho_k}   u \big|_{\rho_k =0}
=& |D_{bk}| g_{bk}
-4 \sum_{j\neq k} |D_{bk}|\circ R_k \circ \Gamma^{\sharp}_j
\circ
|D_{bj}|g_{bj}\\
\nonumber
& + 8 \sum_{{j\neq k} \atop 
	{l\neq j}}|D_{bk}|\circ R_k \circ 
\Gamma^{\sharp}_{j}
\circ |D_{bj}| \circ 
R_j \circ \Gamma^{\sharp}_l\circ 
|D_{bl}| g_{bl}
\\
\nonumber
& 
+
\Psi^{0}_{bk}\left(g_{bk}\right)
+\sum_j \lre_{-1/2}^{jk} 
g_{bj}\\
\label{derWithZeroIt}
&+ \Psi^{-1}_{bk}\left(
\partial_{\rho_k} u \big|_{\partial\Omega_k} \right)
+ \sum_j \lre_{-3/2}^{jk} 
\left(
\partial_{\rho_j} u \big|_{\partial\Omega_j}
\right) 
+R^{-\infty}_{bk} .
\end{align}

Note that
$2 \Gamma^{\sharp}_j
\circ
|D_{bj}| \circ R_j
\equiv 
\Theta^+_j\circ R_j$, with
$\Theta^+_j$ defined as in 
\eqref{symlam}.  
We thus have
from \eqref{derWithZeroIt}
\begin{align}
\nonumber
\partial_{\rho_k}   u \big|_{\rho_k =0}
=& |D_{bk}| g_{bk}
- 2\sum_{j\neq k} 
|D_{bk}|\circ R_k \circ \Theta_j^+ g_{bj}\\
\nonumber
&+ 2
\sum_{{j\neq k} \atop 
	{l\neq j}} |D_{bk}|\circ R_k \circ 
\Theta_j^+ \circ R_j \circ 
\Theta_l^+ g_{bl}
\\
\nonumber
&
+
\Psi^{0}_{bk}\left(g_{bk}\right)
+\sum_j \lre_{-1/2}^{jk} 
g_{bj}\\
\label{withTheta}
&+ \Psi^{-1}_{bk}\left(
\partial_{\rho_k} u \big|_{\partial\Omega_k} \right)
+ \sum_j \lre_{-3/2}^{jk} 
\left(
\partial_{\rho_j} u \big|_{\partial\Omega_j}
\right) 
+R^{-\infty}_{bk}.
\end{align}

When estimating the term
 $\partial_{\rho_k}   u \big|_{\rho_k =0}$
it suffices to write the first two sums on the
 right-hand side simply as a summation of terms
  of the form
$|D_{bk}|\circ \lre_{-1/2}^{jk}g_{bj}$:
\begin{align}
\nonumber
\partial_{\rho_k}   u \big|_{\rho_k =0}
=& |D_{bk}| g_{bk}
+ \sum_{j} |D_{bk}|\circ \lre^{jk}_{-1/2} g_{bj}
+
\Psi^{0}_{bk}\left(g_{bk}\right)
+\sum_j \lre_{-1/2}^{jk} 
g_{bj}\\
\label{derWithZeroEst}
&+ \Psi^{-1}_{bk}\left(
\partial_{\rho_k} u \big|_{\partial\Omega_k} \right)
+ \sum_j \lre_{-3/2}^{jk} 
\left(
\partial_{\rho_j} u \big|_{\partial\Omega_j}
\right) 
+R^{-\infty}_{bk} .
\end{align}

The expression \eqref{withTheta} 
 for the normal derivatives
leads to an expression
 for the solution, $u$, in 
\eqref{vPoissonBasic}.  
Recall from our convention in
 Section \ref{secNotation} that
the boundary operator,
$\Psi_{bk}^{-1}$, when acting
on $\partial_{\rho_k}u|_{\rho_k=0}$
above can be written
$\lre_{-1}^{kk}$:
\begin{align*}
\Psi^{-1}_{bk}
\left( \partial_{\rho_k} u \big|_{\partial\Omega_k}
\right)
=& \lre^{kk}_{-1} \left(
\partial_{\rho_k} u \big|_{\partial\Omega_k}
\right).
\end{align*}
 From
 \eqref{vPoissonBasic}, we thus have
 the expression for the Poisson solution
as
\begin{align}
\nonumber
u
=& \sum_j  \Theta^+_j g_{bj}
+ \sum_{j, k}\Psi^{-1} \circ \lre_{-1/2}^{kj} g_{bk}
 +
\sum_{j} \Psi^{-2}
g_{bj}\\
 \label{vFormDNO}
&+
\sum_{j,k} \Psi^{-2}
\circ \lre_{-1}^{kj} 
\left(
\partial_{\rho_k} u \big|_{\partial\Omega_k}
\right) 
+
\Psi^{-3}\left(\partial_{\rho}
u \big|_{\partial\Omega}\right) 
+ R^{-\infty}.
\end{align}
In addition,
 \eqref{derWithZeroEst}
 above gives an expression
for $\partial_{\rho_j} u|_{\partial\Omega_j}$ 
 (recall the boundary values are to be understood
as non-tangential limits to the boundary).
We note the above relation for future use.
 We also note that all the 
$\Psi^{-2}$ and $\Psi^{-3}$ operators are
 decomposable, as they arise from
the inverse to the Laplacian.

We can now derive (weighted) estimates for
 the Poisson solution from
\eqref{vPoissonBasic};
we show
\begin{thrm}
\label{thrmWeightedPoisson}
Let $u$ be the solution to the homogeneous Dirichlet
 problem \eqref{homoDirProb} with boundary data 
$g_b$ satisfying
$g_{bj} \in L^{2}(\partial\Omega\cap\partial\Omega_j,
\rho_{\hat{j}},\lambda)$ for some
 $\lambda \ge 0$ and for all 
$1\le j \le m$.  Then
\begin{equation*}
  \left\|
u \right\|_{W^{1/2,s}
	\left( \Omega, \rho, \lambda
	\right)}
 \lesssim \sum_j \left\|
 g_{bj} 
 \right\|_{W^{0,s}
 	\left(
 	\partial\Omega_j\cap \partial\Omega,
 	\rho_{\hat{j}}, \lambda 
 	\right) }.
\end{equation*}
\end{thrm}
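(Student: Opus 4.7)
The plan is to plug the representation \eqref{vFormDNO} for $u$ into the weighted Sobolev norm on the left-hand side and then estimate each summand using the three workhorse results from Section \ref{secNotation}: Theorem \ref{weightedLRE} for the boundary-to-boundary $\lre^{jk}_{-\alpha,\gamma}$ operators, Theorem \ref{thrmWghtdDecomp} for decomposable pseudodifferential operators acting on boundary distributions, and Theorem \ref{PsiInt} for pseudodifferential operators acting on interior distributions. The $R^{-\infty}$ remainder is immediately controlled by the $L^2(\partial\Omega)$ estimate \eqref{inftyEst} obtained earlier.

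The leading term $\sum_j \Theta_j^+ g_{bj}$ is handled directly. Since $\Theta_j^+$ has symbol \eqref{symlam} it is, modulo smoothing, decomposable of order $-1$, so Theorem \ref{thrmWghtdDecomp} with $\alpha=1$ and $\beta=1$ yields
\begin{equation*}
\|\Theta_j^+ g_{bj}\|_{W^{1/2,s}(\Omega,\rho,\lambda)} \lesssim \|g_{bj}\|_{W^{0,s}(\partial\Omega\cap\partial\Omega_j,\rho_{\hat{j}},\lambda)} .
\end{equation*}
For the cross terms $\Psi^{-1}\circ \lre^{kj}_{-1/2} g_{bk}$ I would first apply Theorem \ref{weightedLRE} with $\alpha=1/2$, $\beta=1/2$, $\delta=0$ to conclude $\lre^{kj}_{-1/2} g_{bk}\in W^{1/2,s}(\partial\Omega\cap\partial\Omega_j,\rho_{\hat{j}},\lambda)$, and then apply Theorem \ref{thrmWghtdDecomp} to $\Psi^{-1}$ with $\alpha=1$, $\beta=3/2$ to land in $W^{1,s}(\Omega,\rho,\lambda)$, which is stronger than needed. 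The terms $\Psi^{-2} g_{bj}$ are estimated in the same fashion by Theorem \ref{thrmWghtdDecomp} with $\alpha=2$, $\beta=1$, since $\max\{\beta-\alpha,0\}=0$ matches the hypothesis on $g_{bj}$.

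The genuine obstacle lies in the last two summands of \eqref{vFormDNO}, which still involve the non-tangential normal derivatives $\partial_{\rho_k} u\big|_{\partial\Omega_k}$. To dispose of these I would substitute the expression \eqref{derWithZeroEst} for $\partial_{\rho_k} u\big|_{\rho_k=0}$ into those summands. The leading term on the right of \eqref{derWithZeroEst} is $|D_{bk}|g_{bk}$, a first-order boundary operator; however it enters \eqref{vFormDNO} prefixed by the decomposable operators $\Psi^{-2}\circ \lre^{kj}_{-1}$ and $\Psi^{-3}$, which have more than enough negative order to compensate. Concretely, $\lre^{kj}_{-1}\circ|D_{bk}|$ is of type $\lre^{kj}_{0}$ on the boundary, but composed against the $\Psi^{-2}$ on its left one recovers an operator of the same mapping class as $\Psi^{-1}\circ\lre^{kj}_{-1/2}$ treated above, and hence maps $W^{0,s}$ boundary data into $W^{1/2,s}(\Omega,\rho,\lambda)$. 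The remaining terms of \eqref{derWithZeroEst}, namely $\Psi^0_{bk}(g_{bk})$ and $\lre^{jk}_{-1/2}g_{bj}$, are even more favorable, while the two trailing terms $\Psi^{-1}_{bk}(\partial_{\rho_k}u|_{\partial\Omega_k})$ and $\sum_j \lre^{jk}_{-3/2}(\partial_{\rho_j} u|_{\partial\Omega_j})$ carry strictly negative order; after one more substitution of \eqref{derWithZeroEst}, the resulting contributions are smoothing enough that they can either be absorbed on the left or bounded by a Neumann series argument, leaving only dependence on $\sum_j\|g_{bj}\|_{W^{0,s}(\partial\Omega\cap\partial\Omega_j,\rho_{\hat{j}},\lambda)}$.

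The hard part will be the bookkeeping of the $\lre^{jk}_{-\alpha,\gamma}$ compositions: each time an $\lre^{kj}_{-\alpha}$ is composed with a $|D_{bk}|$ or passes through another $R_k$, one must verify that the $\gamma$ parameter required by Theorem \ref{weightedLRE} remains within the admissible range $\beta-\alpha\le\delta$ for $-1/2\le\delta\le 1/2$. Once these checks are carried out for every summand arising after the substitution of \eqref{derWithZeroEst} into \eqref{vFormDNO}, summing over $j,k$ and combining with the control of $R^{-\infty}$ gives the desired estimate.
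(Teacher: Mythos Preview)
Your approach is correct and uses the same ingredients as the paper, but the paper orders the argument so as to avoid the composition bookkeeping you flag at the end. Instead of substituting \eqref{derWithZeroEst} into \eqref{vFormDNO} and closing a Neumann series at the level of the interior norm, the paper first uses \eqref{derWithZeroEst} \emph{by itself}: taking $W^{\beta,s}$ norms of \eqref{derWithZeroEst}, summing over the boundary index, and absorbing the trailing $\Psi^{-1}_{bk}(\partial_{\rho_k}u|_{\partial\Omega_k})$ and $\lre^{jk}_{-3/2}(\partial_{\rho_j}u|_{\partial\Omega_j})$ terms on the left yields the closed boundary estimate
\[
\sum_j \big\|\partial_{\rho_j}u|_{\partial\Omega_j}\big\|_{W^{-1,s}(\partial\Omega_j\cap\partial\Omega,\rho_{\hat j},\lambda)}
\ \lesssim\ \sum_j \|g_{bj}\|_{W^{0,s}(\partial\Omega_j\cap\partial\Omega,\rho_{\hat j},\lambda)}.
\]
This is exactly your Neumann-series step, but carried out at the boundary level \emph{before} touching the interior representation. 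With this in hand the paper substitutes \eqref{derWithZeroEst} into \eqref{vFormDNO} once, reducing the interior expression to $\sum_j\Psi^{-1}g_{bj}+\sum_{j,k}\Psi^{-1}\circ\lre^{kj}_{-1/2}g_{bk}+\sum_{j,k}\Psi^{-2}\circ\lre^{kj}_{-1}(\partial_{\rho_k}u|_{\partial\Omega_k})$ modulo smoothing, and then applies Theorem~\ref{thrmWghtdDecomp} together with the displayed normal-trace bound. The payoff of the paper's ordering is that one never has to chase $\gamma$-parameters through iterated substitutions of \eqref{derWithZeroEst} inside the interior norm; the normal derivatives are controlled once and for all in a fixed negative-order weighted space, and the remaining steps are single applications of Theorems~\ref{weightedLRE} and~\ref{thrmWghtdDecomp}.
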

\begin{proof}
Weighted estimates for
 $	\partial_{\rho_j}   u \big|_{\rho_j =0}$
can be read from \eqref{derWithZeroEst}.
 From 
Theorem 5.6 of \cite{JK95} 
 (see also \cite{JK81}, Theorem 5.1 in \cite{Ve84}, and
  Theorem 1.4.3 in \cite{Sh05})
  we have that
$\partial_{\rho} u |_{\partial\Omega} \in 
  L^{2}(\partial\Omega)$ in the case
 $g_b \in W^1(\partial\Omega)$, 
  and the first sub goal 
of the proof is to extend these
  estimates for $g_b\in W^{\gamma}
  (\partial\Omega)$,
 with $0\le \gamma\le 1$,  using
\eqref{derWithZeroEst} and 
Theorem \ref{weightedLRE}.
 We note the $R_{bk}^{-\infty}$ term stems from
  terms of the form $R_k\circ \Psi^{-\infty}$ 
 (see \eqref{vPoissonBasic}) in addition
  to any terms of the form
\eqref{smthCutoff}  
  resulting from our handling of the singularities in the
inverse to the Laplacian operator.
 The former can be 
estimated by 
 $\|u\|_{-\infty}:= \|u\|_{W^{-\infty}(\Omega)}$, while the
  latter by
 $\left\|
 \partial_{\rho}   u \big|_{\partial\Omega}
 \right\|_{-\infty}+ \|
 g_b \|_{-\infty}$ 
We have, for $-1 \le \beta \le 0$,
\begin{align*}
\left\|
\partial_{\rho_j}   u \big|_{\partial\Omega_j}
\right\|&_{W^{\beta,s}
	\left(
	\partial\Omega_j\cap \partial\Omega,
	\rho_{\hat{j}}, \lambda
	\right)
}\\
\lesssim&
\left\|
g_{bj}
\right\|_{W^{1+\beta,s}
	\left(
	\partial\Omega_j\cap \partial\Omega,
	\rho_{\hat{j}}, \lambda
	\right)}+\sum_{k}\left\|
g_{bk} 
\right\|_{W^{1/2+\beta,s}
	\left(
	\partial\Omega_k\cap \partial\Omega,
	\rho_{\hat{k}}, \lambda
	\right) }\\
&+\sum_{k }\left\|
\partial_{\rho_k}   u \big|_{\partial\Omega_k}
\right\|_{W^{\beta-1/2,s}
	\left(
	\partial\Omega_k\cap \partial\Omega,
	\rho_{\hat{k}}, \lambda
	\right) }\\
& + \| u \|_{-\infty}
+\left\|
\partial_{\rho}   u \big|_{\partial\Omega}
\right\|_{-\infty}+ \|
g_b \|_{-\infty}.
\end{align*}
In applying weighted estimates to \eqref{derWithZeroEst}
 we use 
\begin{equation*}
 \big\| |D_{bk}| h_{bk} \big\|_{W^{\beta,s}(\partial\Omega_k
 	\cap\partial\Omega,\rho_{\hat{k}},\lambda)}
  \simeq \big\| h_{bk} \big\|_{W^{1+\beta,s}(\partial\Omega_k
  	\cap\partial\Omega,\rho_{\hat{k}},\lambda)},
\end{equation*}
 for a distribution 
$h_{bk}\in L^{2}(\partial\Omega\cap\partial\Omega_k,
\rho_{\hat{k}},\lambda)$,
which follows by the product rule of 
 differentiation.
 Then a direct application
 of Theorems \ref{weightedLRE}, \ref{thrmWghtdDecomp},
  and \ref{PsiInt} yields the inequality.

Then summing over $j$
 and bringing lower order estimates of boundary
 values of derivatives to the left-hand side yields 
\begin{align}
\nonumber
\sum_j \left\|
\partial_{\rho}   u \big|_{\partial\Omega_j}
\right\|_{W^{\beta,s}
	\left(
	\partial\Omega_j\cap \partial\Omega,
	\rho_{\hat{j}}, \lambda
	\right) }
\lesssim& 
\sum_j \left\|
g_{bj}
\right\|_{W^{1+\beta,s}
	\left(
	\partial\Omega_j\cap \partial\Omega,
	\rho_{\hat{j}}, \lambda
	\right)}\\
\label{weightedDvBeta}
&+ \| u \|_{-\infty}+\left\|
\partial_{\rho}   u \big|_{\partial\Omega}
\right\|_{-\infty} 
 + \|
 g_b \|_{-\infty}.
\end{align}
In particular,
\begin{equation}
\label{weightedDv}
 \left\|
 \partial_{\rho_j}   u \big|_{\partial\Omega_j}
 \right\|_{W^{-1,s}
 	\left(
 	\partial\Omega_j\cap \partial\Omega,
 	\rho_{\hat{j}}, \lambda
 	\right) }
 \lesssim
\sum_j \left\|
 g_{bj}
 \right\|_{W^{0,s}
 	\left(
 	\partial\Omega_j\cap \partial\Omega,
 	\rho_{\hat{j}}, \lambda
 	\right)}
.
\end{equation}

If we use \eqref{derWithZeroEst} 
(solving first for
$\partial_{\rho_k}u\big|_{\rho_k=0}$ )
in 
 \eqref{vFormDNO}, we get the expression
\begin{align*}
\nonumber
u
=& \sum_j \Psi^{-1} g_{bj}
+ \sum_{j, k}\Psi^{-1} \circ \lre_{-1/2}^{kj} g_{bk}
+
\sum_{j,k} \Psi^{-2}
\circ \lre_{-1}^{kj} 
\left(
\partial_{\rho_k} u \big|_{\partial\Omega_k}
\right) 
,
\end{align*}
modulo smoothing terms.
The $\Psi^{-2}$ and $\Psi^{-1}$
operators are decomposable so 
 Theorem \ref{weightedPsi} applies.  
Using 
 \eqref{weightedDv} 
and the above 
expression as well as
the estimates of Theorem \ref{weightedPsi},
we can conclude the
 estimates
\begin{align*}
  \left\|
u  \right\|_{W^{1/2,s}
 	\left( \Omega, \rho, \lambda
 	\right)}
 \lesssim& 
\sum_j \left\|
g_{bj} 
\right\|_{W^{0,s}
	\left(
	\partial\Omega_j\cap \partial\Omega,
	\rho_{\hat{j}}, \lambda
	\right) }  
\\
&+
\sum_j
\left\|
\partial_{\rho_j}   u \big|_{\rho_j =0}
\right\|_{W^{-1,s}
	\left(
	\partial\Omega_j\cap \partial\Omega,
	\rho_{\hat{j}}, \lambda
	\right)}\\
&+ \| u \|_{-\infty}+\left\|
\partial_{\rho}   u \big|_{\partial\Omega}
\right\|_{-\infty} 
+ \|
g_b \|_{-\infty}\\
\lesssim& \sum_j \left\|
g_{bj} 
\right\|_{W^{0,s}
	\left(
	\partial\Omega_j\cap \partial\Omega,
	\rho_{\hat{j}}, \lambda
	\right) } .
\end{align*} 
\end{proof}
We remark that higher order estimates,
 for instance
\begin{equation*}
\left\|
u \right\|_{W^{1,s}
	\left( \Omega, \rho, k
	\right)}
\lesssim \sum_j \left\|
g_{bj} 
\right\|_{W^{1/2,s}
	\left(
	\partial\Omega_j\cap \partial\Omega,
	\rho_{\hat{j}}, k 
	\right) },
\end{equation*}
follow by taking
 weighted Sobolev $1$
  estimates from the form
\begin{equation*}
u=
\sum_{j}
\Psi^{-2} 
\left(
\partial_{\rho_j} u \big|_{\partial\Omega_j}
\right) +
\sum_j \Psi^{-1}( g_{bj} )
\end{equation*}
from
\eqref{vPoissonBasic}.
 However it is the estimates with
base-level (by which we mean
 the Sobolev level whereby $s=0$)
equal to $1/2$ of the Poisson solution
 which we will use
 for our Main Theorem.

In particular, if the boundary data is
in $W^{s}(\partial\Omega)$,
 for instance as the restriction of 
  a function in $W^{s+1/2}$
 in some neighborhood
of $\cup_j \Omega_j$
 to each piece of the boundary, 
 then
the solution $u$ can be estimated by
\begin{thrm}
\label{specCase}
	\begin{equation*}
	\| u
	\|_{W^{1/2,s}(\Omega,\rho)}
	\lesssim \sum_j 
	\|g_{bj}\|_{W^{s}
		(\partial\Omega)}.
	\end{equation*}
\end{thrm}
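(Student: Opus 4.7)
The plan is to deduce Theorem \ref{specCase} directly from Theorem \ref{thrmWeightedPoisson} by showing that the weighted boundary norm appearing there is dominated by the ordinary Sobolev norm $\|g_{bj}\|_{W^s(\partial\Omega)}$. Once this comparison of boundary norms is in hand, one substitutes into Theorem \ref{thrmWeightedPoisson} and sums over $j$ to conclude.

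First I would fix the implicit weight exponent $\lambda$ in $W^{1/2,s}(\Omega,\rho)$ (consistent with the value $\lambda=2$ used in the Main Theorem) so that Theorem \ref{thrmWeightedPoisson} applies verbatim to the left-hand side. Unfolding Definition \ref{defnWghtSpace} for the boundary piece $\partial\Omega_j\cap\partial\Omega$ and using that in $\mathbb{C}^2$ the index $\hat{j}$ consists of a single entry, the product $\rho_{\hat{j}}^{(s-r)\lambda\times |I\setminus\{j\}|}$ reduces to the scalar factor $\rho_{\hat{j}}^{(s-r)\lambda}$. The right-hand side of Theorem \ref{thrmWeightedPoisson} then reads
\begin{equation*}
\sum_j \sum_{r=0}^{s} \bigl\|\rho_{\hat{j}}^{(s-r)\lambda}\, g_{bj}\bigr\|_{W^{s-r}(\partial\Omega_j\cap\partial\Omega)}.
\end{equation*}

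The main step is the term-by-term estimate $\bigl\|\rho_{\hat{j}}^{(s-r)\lambda}\, g_{bj}\bigr\|_{W^{s-r}(\partial\Omega_j\cap\partial\Omega)} \lesssim \|g_{bj}\|_{W^s(\partial\Omega)}$. I would verify this by the Leibniz rule: each derivative that falls on the weight $\rho_{\hat{j}}^{(s-r)\lambda}$ drops its exponent by one, and since $(s-r)\lambda \ge s-r$ whenever $\lambda\ge 1$, after at most $s-r$ derivatives the remaining power of $\rho_{\hat{j}}$ is still nonnegative, leaving a uniformly bounded prefactor. All remaining derivatives act on $g_{bj}$, producing terms controlled by $\|g_{bj}\|_{W^{s-r}(\partial\Omega)} \le \|g_{bj}\|_{W^{s}(\partial\Omega)}$. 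Summing over $r$ and $j$ and inserting into Theorem \ref{thrmWeightedPoisson} closes the argument.

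There is no genuine obstacle here, since all the hard analysis — the DNO estimates, the weighted mapping properties of the $\lre^{jk}_{-\alpha,\gamma}$ operators, and the Poisson representation itself — has already been absorbed into Theorem \ref{thrmWeightedPoisson}. The only mild point worth checking is that the implicit $\lambda$ in $W^{1/2,s}(\Omega,\rho)$ is an integer $\ge 1$ so that the Leibniz-rule exponents never turn negative, which is guaranteed by the paper's standing choice $\lambda=2$.
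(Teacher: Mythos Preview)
Your proposal is correct and matches the paper's approach: the paper presents Theorem \ref{specCase} with the words ``In particular'' immediately following Theorem \ref{thrmWeightedPoisson}, giving no separate proof, so the intended argument is exactly the norm comparison you carry out. One small remark: since $\rho_{\hat{j}}^{(s-r)\lambda}$ is a smooth bounded function on the closure whenever $(s-r)\lambda$ is a nonnegative integer, multiplication by it is bounded on $W^{s-r}$ without needing to track Leibniz terms individually, which slightly streamlines your main step.
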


Compare the estimates in 
 Theorem \ref{thrmWeightedPoisson}
to the estimates in \cite{JK95};
 Sobolev $\alpha$ estimates are concluded in
\cite{JK95}, where it is shown
 $\|u\|_{W^{\alpha}(\Omega)} \lesssim \|g_b\|_{W^{\alpha-1/2}(\partial\Omega)}$
for $1/2\le\alpha\le3/2$.

We also note for future reference 
 an extension of the estimates for the
normal derivatives.  Under the assumptions of
Theorem \ref{thrmWeightedPoisson}, so 
 that in particular we know that
the boundary values of the 
 normal derivatives 
  (defined as non-tangential limits)
exist and are in $L^2(\partial\Omega_j)$,
for $0\le\beta\le3/2$, we can take Sobolev $-\beta$ 
 estimates of
\eqref{derWithZeroEst}:
\begin{align*}
\nonumber
\left\|
\partial_{\rho_j}   u \big|_{\partial\Omega_j}
\right\|&_{W^{-\beta,s}
	\left(
	\partial\Omega_j\cap \partial\Omega,
	\rho_{\hat{j}}, \lambda
	\right)
}\\
\nonumber
\lesssim&
\left\|
g_{bj}
\right\|_{W^{1-\beta,s}
	\left(
	\partial\Omega_j\cap \partial\Omega,
	\rho_{\hat{j}}, \lambda
	\right)}+\sum_{k}\left\|
g_{bk} 
\right\|_{W^{\max(1/2-\beta,-1/2),s}
	\left(
	\partial\Omega_k\cap \partial\Omega,
	\rho_{\hat{k}}, \lambda
	\right) }\\
&+s.c.\sum_{k }\left\|
\partial_{\rho_k}   u \big|_{\partial\Omega_k}
\right\|_{W^{-3/2,s}
	\left(
	\partial\Omega_k\cap \partial\Omega,
	\rho_{\hat{k}}, \lambda
	\right) }\\
& + \| u \|_{-\infty}+\left\|
\partial_{\rho}   u \big|_{\partial\Omega}
\right\|_{-\infty} 
+ \|
g_b \|_{-\infty}.
\end{align*} 
Summing over the boundaries yields,
in the same manner as 
 \eqref{weightedDv} above,
the estimates
\begin{equation}
\label{minus12Est}
\left\|
\partial_{\rho_j}   u \big|_{\partial\Omega_j}
\right\|_{W^{-\beta,s}
	\left(
	\partial\Omega_j\cap  \partial\Omega,
	\rho_{\hat{j}}, \lambda
	\right) }
\lesssim
\sum_j \left\|
g_{bj}
\right\|_{W^{1-\beta,s}
	\left(
	\partial\Omega_j\cap  \partial\Omega,
	\rho_{\hat{j}}, \lambda
	\right)}
,
\end{equation}
 modulo (estimates of) smoothing terms.

We conclude this section
 by illustrating how the
above analysis can be used to 
 obtain an expression for
the Poisson operator.
A Poisson operator, $P$, associated with
$2\square$ on $\Omega$, with
prescribed boundary values, $g_b$,
is the solution operator to a homogeneous
Dirichlet problem
\begin{align*}
& 2\square P(g_b) =0 \qquad \mbox{in } \Omega\\
& P(g_b) = g_b \qquad \mbox{on }
\partial\Omega.
\end{align*}
As seen from \eqref{vFormDNO} the principal
 terms in the Poisson operator are
\begin{equation*}
 \sum_j  \Theta^+_j \circ R_j.
\end{equation*} 
 And an expression for the Poisson operator
follows from \eqref{vFormDNO},
\begin{align*}
\nonumber
u
=& \sum_j  \Theta^+_j g_{bj}
+ \sum_{j, k}\Psi^{-1} \circ \lre_{-1/2}^{kj} g_{bk}
+
\sum_{j,k} \Psi^{-2}
g_{bj}\\
&+
\sum_{j,k} \Psi^{-2}
\circ \lre_{-1}^{kj} 
\left(
\partial_{\rho_k} u \big|_{\partial\Omega_k}
\right) 
+
\Psi^{-3}\left(\partial_{\rho}
u \big|_{\partial\Omega}\right) 
+ R^{-\infty},
\end{align*}
 to any desired degree by inserting local 
expressions for the normal derivatives,
$\partial_{\rho}
u \big|_{\partial\Omega}$,
 as in 
\eqref{derWithZeroEst}, and iterating.

\section{DNO}
\label{secDNO}

The Dirichlet to Neumann operator is defined
 as the operator which maps boundary values
of the solution to the homogeneous Dirichlet
 problem to the boundary values of the
 (outward) normal derivatives
of the solution to the homogeneous Dirichlet 
 problem.

We note as a corollary from our
 proof of Theorem \ref{thrmWeightedPoisson},
in particular the inequalities given in 
 \eqref{weightedDvBeta}
and \eqref{minus12Est},
 the following estimates for the 
 DNO:
\begin{thrm}
	 \label{thrmDNOWeight}
Let $-3/2\le \beta \le 0$.
Let $u$ be the solution to \eqref{homoDirProb} with
 $g_{bj} \in W^{1+\beta}(\partial\Omega\cap\partial\Omega_j,
  \rho_{\hat{j}},k)$ for all 
 $1\le j \le m$.  Then
 \begin{multline*}
   \left\|
   \partial_{\rho_j}   u \big|_{\rho_j =0}
   \right\|_{W^{\beta,s}
   	\left(
   	\partial\Omega_j\cap \partial\Omega,
   	 \rho_{\hat{j}},\lambda
   	\right) }
   \lesssim
   \left\|
   g_{bj}
   \right\|_{W^{1+\beta,s}
   	\left(
   	\partial\Omega_j\cap \partial\Omega,
   	\rho_{\hat{j}},\lambda
   	\right)}\\
  + \| u \|_{-\infty}+\left\|
  \partial_{\rho}   u \big|_{\partial\Omega}
  \right\|_{-\infty} 
  + \|
  g_b \|_{-\infty}.
 \end{multline*}
\end{thrm}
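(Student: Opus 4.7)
The claim is essentially a restatement of inequalities already established in the proof of Theorem \ref{thrmWeightedPoisson}: the case $-1\le \beta\le 0$ is precisely \eqref{weightedDvBeta}, and the range $-3/2\le \beta <-1$ is covered by \eqref{minus12Est} under the change of variable $\beta\mapsto -\beta$. The plan is to unify these two derivations by applying weighted $W^{\beta,s}$-norms directly to the iterated formula \eqref{derWithZeroEst} for $\partial_{\rho_k}u\big|_{\partial\Omega_k}$, thereby obtaining a single estimate valid on the full range $-3/2\le\beta\le 0$.

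The principal term $|D_{bk}|g_{bk}$ contributes exactly the leading $\|g_{bk}\|_{W^{1+\beta,s}}$ on the right, via the product-rule identity $\big\| |D_{bk}|h_{bk}\big\|_{W^{\beta,s}}\simeq \|h_{bk}\|_{W^{1+\beta,s}}$. The cross-boundary data terms $\lre^{jk}_{-1/2}g_{bj}$ and $|D_{bk}|\circ\lre^{jk}_{-1/2}g_{bj}$ are handled by Theorem \ref{weightedLRE}, producing norms of strictly lower Sobolev order than the leading one after summation over $j$, while the decomposable pseudodifferential operators $\Psi^0_{bk}$ and $\Psi^{-1}_{bk}$ acting on $g_{bk}$ are estimated via Theorems \ref{thrmWghtdDecomp} and \ref{PsiInt}. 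The smoothing remainder $R^{-\infty}_{bk}$, together with the cutoff-generated contributions arising in \eqref{smthCutoff} from our handling of the singular inverse symbols, accounts for the three $\|\cdot\|_{-\infty}$ error terms on the right of the claim; the bounds on these remainders in terms of $\|u\|_{-\infty}$, $\|\partial_\rho u|_{\partial\Omega}\|_{-\infty}$ and $\|g_b\|_{-\infty}$ are exactly those already recorded just before \eqref{inftyEst}.

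The main subtlety, as in the proof of Theorem \ref{thrmWeightedPoisson}, is the absorption of the feedback terms $\Psi^{-1}_{bk}\partial_{\rho_k}u\big|_{\partial\Omega_k}$ and $\lre^{jk}_{-3/2}\partial_{\rho_j}u\big|_{\partial\Omega_j}$: each produces a strictly lower-order norm, namely $W^{\beta-1,s}$ and $W^{\beta-3/2,s}$ respectively, of the unknown normal derivatives. Summing over $k=1,\ldots,m$ these terms appear with Sobolev orders below the target $W^{\beta,s}$, and can be moved to the left-hand side by the standard iteration used to derive \eqref{weightedDv}, the iteration terminating once the residual is dominated by $\|\partial_\rho u|_{\partial\Omega}\|_{-\infty}$. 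The only care needed is to verify, at the two extreme values $\beta=-3/2$ and $\beta=0$, that the hypothesis $\beta-\alpha\le \delta$ with $-1/2\le\delta\le 1/2$ of Theorem \ref{weightedLRE} holds for every cross-term $\lre^{jk}$ occurring in \eqref{derWithZeroEst}; this is direct in each case, the borderline terms being $\lre^{jk}_{-3/2}$ at $\beta=-3/2$ and $\lre^{jk}_{-1/2}$ at $\beta=0$.
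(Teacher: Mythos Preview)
Your proposal is correct and follows essentially the same approach as the paper: the theorem is stated there as an immediate corollary of the inequalities \eqref{weightedDvBeta} and \eqref{minus12Est}, which were derived in the proof of Theorem~\ref{thrmWeightedPoisson} by taking weighted $W^{\beta,s}$-norms of \eqref{derWithZeroEst}, estimating each term via Theorem~\ref{weightedLRE} and the product-rule identity for $|D_{bk}|$, and absorbing the lower-order feedback terms after summing over the boundaries. Your unified treatment simply merges those two derivations into one pass over \eqref{derWithZeroEst}, which is exactly the underlying argument.
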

 In the case of $\beta=0$ and $s=0$
  we obtain
 the known 
 estimates on 
 Lipschitz
domains: 
\begin{equation*}
\left\| \partial_{\rho_j} u|_{\partial\Omega_j}
\right\|_{L^2(\partial\Omega \cap 
	\partial\Omega_j)}
\lesssim \| g_b \|_{W^1(\partial\Omega)},
\end{equation*}
 where the boundary values are to be 
understood in the sense of non-tangential
 limits,
for each $j$ (see Theorem 5.1 in \cite{Ve84}
and Theorem 1.4.3 in \cite{Sh05}). 

We start with a simplification of the
 expression for the normal derivative
along a boundary as in \eqref{derivToIterate}:
\begin{align}
\nonumber
\partial_{\rho_k}   u \big|_{\rho_k =0}
=& |D_{bk}| g_{bk}
-2 \sum_{j\neq k} 
|D_{bk}| \circ R_k \circ \Gamma^{\sharp}_j
\left(
\partial_{\rho_j} u 
\big|_{\partial\Omega_j}
+ |D_{bj}|g_{bj}
\right)
\\
\nonumber
& 
+\sum_{j\neq k} \lre_{-3/2}^{jk} 
\left(
\partial_{\rho_j} u \big|_{\rho_j =0}
\right) 
+ \sum_{j\neq k}
\lre_{-1/2}^{jk}  g_{bj}\\
\label{derForm-1}
&
+
\Psi^{-1}_{bk}\left(\partial_{\rho_k} u 
\big|_{\partial\Omega_k}\right)
+
\Psi^{0}_{bk}\left(g_{bk} \right)
+ R_{bk}^{-\infty} .
\end{align}

Our aim in this section is to calculate
 the zero order term, written
as $\Psi^{0}_{bk}\left(g_{bk} \right)$ in the expression 
 \eqref{derForm-1}, which is the same
  zero order term in \eqref{derWithZeroEst}.  
 We first include the
zero order operators on $g_{bk}$ coming from
$ \Psi^{-1}_{bk}
(\partial_{\rho_k} u \big|_{\rho_k
	=0})$ with the 
 $\Psi^{0}_{bk}\left(g_{bk} \right) $ term
in \eqref{derWithZeroEst}.  
 Let us denote the zero order 
boundary pseudodifferential operator 
 acting on $g_{bk}$ by
 $\Lambda_{bk}^0$ so that \eqref{withTheta} now 
reads
\begin{align}
\nonumber
\partial_{\rho_k}   u \big|_{\rho_k =0}
=& |D_{bk}| g_{bk}
- 2\sum_{j\neq k} 
|D_{bk}|\circ R_k \circ \Theta_j^+ g_{bj}\\
\label{derFormFirst}
&+ 2
\sum_{{j\neq k} \atop 
	{l\neq j}} |D_{bk}|\circ R_k \circ 
\Theta_j^+ \circ R_j \circ 
\Theta_l^+ g_{bl}
+
\Lambda^{0}_{bk}\left(g_{bk}\right)
+B_kg_b,
\end{align}
where we write $B_kg_b$ to denote the error
 terms
\begin{equation*}
 B_k g_b :=
  \sum_j \lre_{-1/2}^{jk} 
  g_{bj}
  + \sum_j \lre_{-3/2}^{jk} 
  \left(
  \partial_{\rho_j} u \big|_{\partial\Omega_j}
  \right) +R^{-\infty}_{bk}.
\end{equation*}
We note for future reference the form
 of the $\lre^{jk}_{-3/2}$ operators
is given by
\begin{equation}
\label{form3/2}
 \lre^{jk}_{-3/2}
  =
   R_k\circ \Psi^{-2} \circ R_j +
  |D_{bk}|\circ R_k \circ \Psi^{-1} \circ
   R_l \circ \Psi^{-1} \circ
    R_q \circ \Psi^{-2} \circ R_j,
\end{equation}
for $k\neq l$, $l\neq q$, $q\neq j$,
modulo lower order operators.
 This will be useful in 
Section \ref{secWeightedEst}.

With \eqref{derFormFirst}
 in \eqref{vPoissonBasic},
we can improve the expression for 
 the Poisson solution in \eqref{vFormDNO}:
\begin{align}
\nonumber
u
=& \sum_j  \Theta^+_j g_{bj}
+ \sum_{j, k}\Psi^{-1} \circ \lre_{-1/2}^{kj} g_{bk}
+
\sum_{j} \Psi^{-2}
g_{bj}\\
\label{vFormDNO2}
&+
\sum_{j,k} \Psi^{-2}
\circ \lre_{-3/2}^{kj} 
\left(
\partial_{\rho_k} u \big|_{\partial\Omega_k}
\right) 
+
\Psi^{-3}\left(\partial_{\rho}
u \big|_{\partial\Omega}\right) 
+ R^{-\infty}.
\end{align}

If we return to the derivation of 
 \eqref{withTheta}, we see the
$\Lambda^{0}_{bk} g_{bk} $ comes from
$i)$ $-2|D_{bk}|\circ \Psi^{-2}_{bk}$
applied to $\partial_{\rho_k}u \big|_{\rho_k
	=0}$,
where the 
$\Psi^{-2}_{bk}$ operator itself comes from
the restriction to $\partial\Omega_k$ of the
operator of order $-3$ in the symbol
expansion of the inverse to $\Gamma$,
 $ii)$ $-2|D_{bk}|\circ \Psi^{-1}_{bk}$
applied to $g_{bk}$, where the 
 $\Psi^{-1}_{bk}$ operator comes from
the restriction to $\partial\Omega_k$ of the
 operator of order $-3$ in the symbol
expansion of the inverse to $\Gamma$
 (composed with the operator with symbol
  $i\eta_k$),
   and $iii)$ from
$2|D_{bk}|\circ R_k\circ \Psi^{-1}
 \circ \Theta_k^+$
 applied to $g_{bk}$, where the $\Psi^{-1}$ 
operator is the same $\Psi^{-1}$ operator
 in \eqref{errorPoisson}, coming from
\begin{equation*}
 \Gamma^{-1} \circ \left(
 \sqrt{2}\sum_{j=1}^{m} S_j
 \circ(\partial_{\rho_j} u)
 + A(u)
 \right),
\end{equation*}
as well as $-2|D_{bk}|\circ \Psi^{-1}_{bk}g_{bk}$
 terms from
the operator of order $-2$ stemming from
 $\Gamma^{-1}\circ S_j$ 
in the expression
\begin{equation}
\label{contribII}
\Gamma^{-1} \circ S_j (\partial_{\rho_j}u)
=  \Psi^{-1} u + \Gamma^{-1}\circ S_j (g_b)
\end{equation}
using \eqref{pseudoOnDer}.

Regarding the terms from cases $i)$ and
 $ii)$ above we need to look at
 the symbol expansion of the inverse
of the operator $\Gamma$. 
Recall the second order operator,
$\Gamma$, in
\eqref{localSAT}:
\begin{align}
\nonumber
\Gamma:=
-
\bigg[\partial_{\rho_1}^2
+& \cdots + \partial_{\rho_m}^2
+ 2\Big(
\partial_{x_{m+1}}^2 + \cdots
+  \partial_{x_{m+l}}^2
\Big)\\
\label{GammaDefn}
&+ \frac{1}{2}\Big(
\partial_{x_{m+l+1}}^2 + \cdots
+  \partial_{x_{2n}}^2
\Big)
+ 2 \sum_{ij} l_{ij}
\partial_{x_i}\partial_{x_j}
\bigg]+ \sum_{j=1}^m \rho_j \tau_j .
\end{align}
For the 
second order operator 
$\sum \rho_j \tau_j$ in $\Gamma$, we write 
\begin{equation*}
\tau_k = -  \sum_{i,j}\tau_k^{ij} 
\frac{\partial^2}{\partial x_i \partial x_j},
\end{equation*}
%
%
and 
\begin{equation*}
\tau_k^{ij} = \tau_k^{ij}(\rho_{\hat{k}},x),
\end{equation*}
modulo $O(\rho_k)$.

We use the expansion
\begin{align}
 \nonumber
  \sigma\left( \Gamma^{-1} \right)
 =& \frac{1}{\eta^2+\Xi^2} \\
\label{orderMin3}
 &-i \frac{\sum_j \partial_{\xi_j}
 (\eta^2 +\Xi^2)  \partial_{x_j} 
 (\eta^2+\Xi^2)
+ \sum_j \partial_{\eta_j}
(\eta^2 +\Xi^2)  \partial_{\rho_j} 
(\eta^2+\Xi^2)
}{(\eta^2+\Xi^2)^3}.
\end{align}
 modulo lower order symbols.
We again remind the reader the above expansion is 
 just formal.  To avoid the singularities arising
at $\eta=\xi=0$, we could work instead with the
 operator $\Gamma +I$ and use cutoffs 
in the expansion \eqref{orderMin3};
 see the discussion following Equation \ref{errorPoisson}.

Recall that, for given $1\le j\le m$,
  we denote
by $\eta_{\hat{j}}$ the dual to the 
tangential (with respect to
$\partial\Omega_j$) coordinates $\rho_k$ for
$1
\le k \le m$.  Thus, by
$|\eta_{\hat{j}}|$ we mean
\begin{equation*}
|\eta_{\hat{j}}| = \sqrt{\sum_{1
		\le k \le m \atop k\neq j}
	\eta_k^2}.	 
\end{equation*}
Similarly, 
\begin{equation*}
|\xi_{\hat{j}}| = \sqrt{\sum_{1
		\le k \le 2n-m \atop k\neq j}
	\xi_k^2},
\end{equation*}
for $1\le j\le m$.
We also define a notation which gives importance
 to the vector fields $V_j$ for
$m+1\le j \le n$:
\begin{equation*}
 |\xi_V| = \sqrt{\xi_{m+1}^2 + \cdots \xi_{2n-m}^2}.
\end{equation*}

We extend to 
 $\mathbb{R}^{2n-1}$ the microlocal 
neighborhoods described in Section
 \ref{secNotation}
for each boundary, $\partial\Omega_j$.
 Namely, $\psi^-_{N,bj}$ will be
defined in analogy with $\psi^-_N$ with 
 support in the region
\begin{equation*}
 \xi_j < - N \sqrt{\eta_{\hat{j}}^2
  + \xi_{\hat{j}}}
\end{equation*}
for $1\le j\le m$.

  We note
\begin{align}
\nonumber
 \partial_{x_j}
 (\eta^2 +\Xi^2) =& 
  \partial_{x_j}
  \Xi^2 \\ 
\label{dxXi}
 =& O(x,\rho)
  O(\xi^2+\eta^2) + O(|\xi_{V}|)
   O(\xi,\eta),
\end{align}  
 for any
$1
\le j \le 2n-m $
(see Section \ref{secSetup} above),
and
\begin{align*}
 \partial_{\rho_j}
(\eta^2 +\Xi^2) =& 
\partial_{\rho_j}
\Xi^2 \\ 
=& \sum_{k,l} \tau_j^{kl} \xi_k \xi_l
  + O(\rho_j),
\end{align*}
 for
$1
\le j \le m $.

We thus have
\begin{equation*}
 \frac{\partial_{\xi_j}\Xi^2 \partial_{x_j}
 	 \Xi^2}{(\eta^2+\Xi^2)^3}
= O(x,\rho) + O\left(
 \frac{|\xi_{V}|}
  {(\eta^2+\Xi^2)^2} \right)
 ,
\end{equation*}
while
\begin{equation*}
 \frac{\partial_{\eta_j}\Xi^2 \partial_{\rho_j}
	\Xi^2}{(\eta^2+\Xi^2)^3}=
 2 \tau_j^{kl}\frac{\eta_j \xi_k \xi_l}
 {(\eta^2+\Xi^2)^3}
  +O(\rho_j)
\end{equation*}
which is all we will need to know of this 
 operator.   
 The contribution of the
last symbol 
 to the operators
written as $\Psi^{-3}$ in 
\eqref{errorPoisson} is given by
\begin{align}
\label{sumTerm}
-\frac{i}{(2\pi)^{2n}}\sum_{j,k,l } & \int
 \tau_{j}^{kl} \frac{2\eta_{j}\xi_k\xi_l}
 {\left(\eta^2+\Xi^2\right)^3} 
    F.T._{\hat{j}}\partial_{\rho_j}
 u(0_j,\eta_{\hat{j}},\xi)
 e^{i\rho\cdot\eta} e^{ix\xi} d\eta d\xi,
\end{align}
modulo the $O(\rho)$ terms, and modulo
 terms with symbols
of order
\begin{equation*}
O\left(\frac{|\eta_{\hat{j}}|}
 {(\eta^2+\Xi^2)^2}
 \right)
\end{equation*}
acting on
  $\partial_{\rho_j}u|_{\partial\Omega_j}$.
Note that such terms lead to 
 operators with arbitrarily small
norm in microlocal neighborhoods defined by
 the support of $\psi^-_{N,bj}$ for 
large $N$.
Upon integrating with respect to
 $\eta_j$, the integrals of the summation
  term in \eqref{sumTerm}
  are $O(\rho_j)$.  On the other hand
 restricting to a boundary $\Omega_k$ for
  $k\neq j$ would lead to 
 $\lre_{-5/2}^{jk} \left(
  \partial_{\rho_j} u 
  \big|_{\rho_j=0}
 \right)$ terms (see \eqref{min52}).

  The operator
associated with the symbol of order -3 in 
 \eqref{orderMin3} contributes (upon composition with
  the operators $2|D_k|$) to the
$\Psi^{-1}_{bk}
(\partial_{\rho_k} u \big|_{\rho_k
	=0})$ in 
\eqref{derForm-1}.   
 Denote this
 $\Psi^{-1}_{bk}$ operator by
  $A_{bk}^{-1}$. 
  To handle error terms from the
order -3 symbol in \eqref{orderMin3}, when 
 used as operators, we use the notation
 $\Psi^{\alpha}_{\varepsilon}$ 
 introduced in 
\eqref{smallOps} to refer to
  pseudodifferential operators with small operator
norm.
We work
 in a microlocal neighborhood,
with respect to $\partial\Omega_k$, 
 that is with symbols with support in 
the support of $\psi^-_{N,bk}$ with 
 large $N$.
In particular,
$|\xi_{\hat{k}}| \ll \sqrt{\xi^2+\eta_{\hat{k}}^2 }$,
and so for example, a symbol, given by
\begin{equation*}
 O\left(
  \frac{\xi_{\hat{k}}^2 +
   \eta_{\hat{k}}^2 }
 {\xi^2 + \eta_{\hat{k}}^2}
 \right),
\end{equation*}
of an operator
 on $\partial\Omega_k$,
will be denoted
 $\Psi^0_{\varepsilon,bk}$.
Symbols which are 
 $O(x,\rho)$ will also be included
in $\Psi^0_{\varepsilon,bk}$ as
 we can restrict
  to a small neighborhood 
 of the point on the boundary 
under consideration.

We have
\begin{align*}
A^{-1}_{bk}
 =& - 4 i \sum_{j,l} |D_{bk}|\circ R_k 
  \circ Op \left(
 \tau_k^{jl} \frac{\eta_k\xi_j\xi_l}
 {\left(\eta^2+\Xi^2_{bk}\right)^3} 
\right) \circ R_k + \Psi^{-1}_{\varepsilon,bk}\\
 =&  \Psi^{-1}_{ \varepsilon,bk},
\end{align*}

  For the terms $-2|D_{bk}|\circ \Psi^{-1}_{bk}$ arising in case $ii)$ above,
we note the $\Psi^{-1}_{bk}$ operator is 
 just the restriction to the boundary of
the operator in
$\Psi^{-2} \left(g_{bk}\right)$, from \eqref{errorPoisson}.
Let us denote this operator of 
 order -2 by $A^{-2}$.
As stated earlier, the $A^{-2}$ operator 
itself is just the operator of order $-3$ 
 given by the symbol
expansion of the inverse to $\Gamma$
 with symbol as in \eqref{orderMin3}
composed with the operator with symbol
$i\eta_k$.

 We note that
 \begin{align*}
\frac{i}{(2\pi)^n}&\sum_{\alpha,j,l}
 \int
 \tau_{\alpha}^{jl} \frac{2\eta_{\alpha}\xi_j\xi_l}
 {\left(\eta^2+\Xi^2\right)^3} 
 i\eta_k
 \widehat{g}_{bk}(\eta_{\hat{k}},\xi)
 e^{i\rho\cdot\eta} e^{ix\xi} d\eta d\xi\\
  =&- \frac{1}{(2\pi)^n}\sum_{j,l}
  \int
  \tau_{k}^{jl} \frac{2\eta_{k}^2\xi_j\xi_l}
  {\left(\eta^2+\Xi^2\right)^3} 
  \widehat{g}_{bk}(\eta_{\hat{k}},\xi)
  e^{i\rho\cdot\eta} e^{ix\xi} d\eta d\xi
 + O(\rho_k),
\end{align*}
using that
\begin{equation*}
 \int
 \frac{\eta_{k}}
 {\left(\eta^2+\Xi^2\right)^3} 
 \widehat{g}_{bk}(\eta_{\hat{k}},\xi)
 e^{i\rho\cdot\eta} e^{ix\xi} d\eta d\xi
  =O(\rho_k)
\end{equation*}
as above.

We thus have
\begin{align*}
R_k\circ A^{-2} g_{bk}
=&-\frac{i}{(2\pi)^n}R_k\circ \sum_{\alpha,j,l}
 \int
\tau_{\alpha}^{jl} \frac{2\eta_{\alpha}\xi_j\xi_l}
{\left(\eta^2+\Xi^2\right)^3} 
i\eta_k
\widehat{g}_{bk}(\eta_{\hat{k}},\xi)
e^{i\rho\cdot\eta} e^{ix\xi} d\eta d\xi\\
=&\frac{2}{(2\pi)^n}R_k\circ
 \sum_{ij} \int
\tau_k^{ij} \frac{\eta_k^2\xi_i\xi_j}
{\left(\eta^2+\Xi^2_{bk}\right)^3} 
\widehat{g}_{bk}(\eta_{\hat{k}},\xi)
e^{i\rho\cdot\eta} e^{ix\xi} d\eta d\xi \\
=&  \frac{1}{(2\pi)^{n-1}}
\frac{1}{8} \int \tau_k^{kk} 
 \frac{\xi_k^2}{(\eta_{\hat{k}}^2 + \Xi_{bk}^2)^{3/2} } 
\widehat{g}_{bk}(\eta_{\hat{k}},\xi)
e^{i\rho_{\hat{k}}\cdot\eta_{\hat{k}}}
  e^{ix\xi} d\eta_{\hat{k}} d\xi
,
\end{align*}
modulo 
terms (as in case $i)$ above) 
 which are $\Psi_{\varepsilon,bk}^{-1}$ in the 
  microlocal neighborhood,
  with respect to $\partial\Omega_k$, in which
  $|\xi_{\hat{k}}|\ll \sqrt{\xi^2+\eta_{\hat{k}}^2 }$.

Thus the term
 in
 $-2|D_{bk}|\circ \Psi^{-1}_{bk} 
  \left(g_{bk}\right)$ stemming
  from
case $ii)$ 
 in a microlocal neighborhood defined by
 the support of $\psi^{-}_{N,bk}$
can be
 written as
	\begin{align}
\nonumber
-2 |D_{bk}|\circ \Psi^{-1}_{bk} &
 g_{bk}\\
\nonumber
 =& -2|D_{bk}|\circ
Op\left( 
	\frac{1}{8}
	\tau_k^{kk} 
	\frac{\xi_k^2}{(\eta_{\hat{k}}^2 + \Xi_{bk}^2)^{3/2}} 
\right)
 \left(g_{bk}\right)\\
 \label{DRT}
=& -
\frac{1}{4}
\frac{1}{(2\pi)^{n-1}}
 \int \tau_k^{kk} 
\frac{\xi_k^2}{\eta_{\hat{k}}^2 + \Xi_{bk}^2} 
\widehat{g}_{bk}(\eta_{\hat{k}},\xi)
e^{i\rho_{\hat{k}}\cdot\eta_{\hat{k}}}
e^{ix\xi} d\eta_{\hat{k}} d\xi
,
\end{align}
modulo $\Psi^0_{\varepsilon, bk} g_b$
 as well as lower order terms.

We now handle case $iii)$ and the terms
from
\begin{equation*}
\Gamma^{-1} \circ \left(
\sqrt{2}\sum_{j=1}^{m} S_j
\circ(\partial_{\rho_j} u)
+ A(u)
 \right).
\end{equation*}
We first look at 
$\Gamma^{-1} \circ S_j
\circ(\partial_{\rho_j} u)$.
Let the symbol of $S_j$ be given by
\begin{equation*}
 \sigma(S_j)
  = s_j(\rho,x).
\end{equation*}
We will also use the notation
\begin{equation*}
 s_{0_kj}
(\rho_{\hat{k}},x) =
s_j(0_k,\rho_{\hat{k}},x),
\end{equation*}
and, in the case $j=k$, simply
\begin{equation*}
 s_{0j}
 (\rho_{\hat{j}},x) =
   s_j(0_j,\rho_{\hat{j}},x).
\end{equation*}
Then,
modulo lower order terms, we have
\begin{align*}
\Gamma^{-1} \circ S_j
\circ(\partial_{\rho_j} u) =
\frac{1}{(2\pi)^n}\int 
s_j(\rho,x)
\frac{
	\widehat{g}_{bj} + i\eta_j \widehat{u}
}
{\eta^2+\Xi^2}
e^{i \rho \cdot \eta} e^{ix\xi} d\eta  d\xi.
\end{align*}
The integral involving $g_{bj}$ can
be calculated by integrating with 
respect to $\eta_j$:
\begin{align*}
\int 
s_j(\rho,x)
\frac{
	\widehat{g}_{bj}}
{\eta^2+\Xi^2}
e^{i \rho \cdot \eta}& e^{ix\xi} d\eta  d\xi
 \\
=&
 \int 
 s_{0j}(\rho_{\hat{j}},x)
 \frac{
 	\widehat{g}_{bj}}
 {\eta^2+\Xi^2_{bj}}
 e^{i \rho \cdot \eta} e^{ix\xi} d\eta  d\xi
\\
=&2\pi \int 
s_{0j}(\rho_{\hat{j}},x)
\frac{
	\widehat{g}_{bj}}
{2\sqrt{\eta^2_{\hat{j}}+
		\Xi^2_{bj}}}
e^{\rho_j\sqrt{\eta^2_{\hat{j}}+
		\Xi^2_{bj}}}
e^{i \rho_{\hat{j}} \cdot \eta_{\hat{j}}}
e^{ix\xi} d\eta  d\xi,
\end{align*}
modulo lower order terms.
 Restricting to $\partial\Omega_k$ 
and applying $2\sqrt{2}
 |D_{bk}|$ yields a term
\begin{equation}
\label{opSk}
 \sqrt{2} Op(s_{0k}(\rho_{\hat{k}},x))
\end{equation}
which is to be included in the 
 $\Lambda^0_b$ operator.

For the integral involving $u$ we use
the expression for the Poisson 
solution in \eqref{vFormDNO2}.
We have
\begin{align}
\nonumber
\int 
s_j(\rho,x)&
\frac{
	i\eta_j \widehat{u}
}
{\eta^2+\Xi^2}
e^{i \rho \cdot \eta} e^{ix\xi} d\eta  d\xi
\\
\nonumber
=& 
-\sum_l \int 
s_{0_lj}(\rho_{\hat{l}},x)
\frac{
	\eta_j 
}
{\eta^2+\Xi^2_{bl}}
\frac{1}{\eta_l+i\sqrt{
		\eta_{\hat{l}}^2 + \Xi_{bl}^2
}}
\widehat{g}_{bl}
e^{i \rho \cdot \eta} e^{ix\xi} d\eta  d\xi\\
\nonumber
&+  \sum_{j,l} 
\Psi^{-2} \circ \lre_{-1/2}^{lj} g_{bl}
+ \Psi^{-3}g_b\\
\label{sTuTerm}
&+
\sum_{j,l} \Psi^{-3}\circ 
\lre_{-3/2}^{lj} 
\left(
\partial_{\rho_l} u \big|_{\partial\Omega_l}
\right) 
+
\Psi^{-4}\left(\partial_{\rho}
u \big|_{\partial\Omega}\right) 
+R^{-\infty}.
\end{align}
We will restrict the above relation
 to the boundary, $\partial\Omega_k$, and
we analyze the terms in the
 first summation on the
 right according to the cases
of $l=k$ or $l\neq k$, and
 according to $j=k$ or $j\neq k$.
In the case $l\neq k$ restricting to
 $\partial \Omega_k$ yields a term
$\lre_{-3/2}^{lk} g_{bl}$.  

In the case $l=k$, and 
$j\neq k$, we have
\begin{align*}
-R_k \circ \int 
s_{0_kj}(\rho_{\hat{k}},&x)
\frac{
	\eta_j 
}
{\eta^2+\Xi^2_{bk}}
\frac{1}{\eta_k+i\sqrt{
		\eta_{\hat{k}}^2 + \Xi_{bk}^2
}}
\widehat{g}_{bk}
e^{i \rho \cdot \eta} e^{ix\xi} d\eta  d\xi
\\
&=
\frac{2\pi i}{4}\sum_{k\neq j} \int 
s_{0_kj}(\rho_{\hat{k}},x)
\frac{
	\eta_j 
}
{\eta_{\hat{k}}^2 + \Xi_{bk}^2}
\widehat{g}_{bk}
e^{i \rho_{\hat{k}} \cdot \eta_{\hat{k}}}
e^{ix\xi} d\eta  d\xi  \\
&= 
 \Psi^{-1}_{\varepsilon, bk} g_{bk},
\end{align*}
where the symbol of the
 $\Psi^{-1}_{\varepsilon}$ operator is
of the form
\begin{equation*}
 O\left(\frac{|\eta_{\hat{k}}|}
  {\eta_{\hat{k}}^2 + \Xi_{bk}^2}\right)
\end{equation*}
and thus 
 can be made arbitrarily small
  in the support of
$\psi^-_{N,bk}$ 
(for large $N$).

Finally, in the case $l=k$ and 
 $j=k$, we have
\begin{align*}
-R_k \circ \int 
s_{0_k}(\rho_{\hat{k}},&x)
\frac{
	\eta_k 
}
{\eta^2+\Xi^2_{bk}}
\frac{1}{\eta_k+i\sqrt{
		\eta_{\hat{k}}^2 + \Xi_{bk}^2
}}
\widehat{g}_{bk}
e^{i \rho \cdot \eta} e^{ix\xi} d\eta  d\xi
\\
&=
-\frac{2\pi}{4}\int 
\frac{s_{0k}(\rho_{\hat{k}},x)}
{\sqrt{
		\eta_{\hat{k}}^2 + \Xi_{bk}^2
}}
\widehat{g}_{bk}
e^{i \rho_{\hat{k}} \cdot \eta_{\hat{k}}}
e^{ix\xi} d\eta_{\hat{k}}  d\xi.
\end{align*}
We can now restrict
 \eqref{sTuTerm} to $\partial\Omega_k$
 and write
\begin{align*}
\nonumber
R_k \circ
 \sum_j \int 
s_j(\rho,x)&
\frac{
	i\eta_j \widehat{u}
}
{\eta^2+\Xi^2}
e^{i \rho \cdot \eta} e^{ix\xi} d\eta  d\xi
\\
\nonumber
=& 
-\frac{2\pi}{4}\int 
\frac{s_{0k}(\rho_{\hat{k}},x)}
{\sqrt{
		\eta_{\hat{k}}^2 + \Xi_{bk}^2
}}
\widehat{g}_{bk}
e^{i \rho_{\hat{k}} \cdot \eta_{\hat{k}}}
e^{ix\xi} d\eta_{\hat{k}}  d\xi
 + \Psi^{-1}_{\varepsilon,bk} g_{bk}
\\
\nonumber
&+  \sum_{j,l} 
R_k\circ \Psi^{-2}\circ
  \lre_{-1/2}^{lj} g_{bl}
+ R_k\circ \Psi^{-3} g_{b}\\
&+\sum_{j,l} 
R_k\circ \Psi^{-2}\circ \lre_{-3/2}^{lj} 
\left(
\partial_{\rho_l} u \big|_{\partial\Omega_l}
\right) \\
&+
R_k\circ\Psi^{-4}\left(\partial_{\rho}
u \big|_{\partial\Omega}\right) 
+R_{bk}^{-\infty}.
\end{align*}
Applying $2\sqrt{2}|D_k|$ yields other
 terms
\begin{equation}
\label{opSk2}
 -\frac{\sqrt{2}}{2}
  Op(s_{0k}) + \Psi^0_{\varepsilon,bk}
\end{equation}
to be added to 
$\Lambda^0_b$.

We also note that the error terms arising  
 from 
$2\sqrt{2}|D_{bk}| \Gamma^{-1} \circ
 S_j \circ (\partial_{\rho_j}u)$
are of the form
\begin{equation*}
\sum_{l} 
 \lre_{-1/2}^{lk} g_{bl}
 + \Psi^{-1}_{bk}g_{bk}+
\sum_{l} 
 \lre_{-3/2}^{lk} 
 \left(
 \partial_{\rho_l} u \big|_{\partial\Omega_l}
 \right) 
+R_{bk}^{-\infty}
\end{equation*}
and are thus already included in the
 formula \eqref{derFormFirst}.

Putting 
\eqref{opSk} and \eqref{opSk2} together,
 we see the 
 terms $2\sqrt{2}|D_{bk}|\circ R_k \circ
\Gamma^{-1}\circ S_j\circ \partial_{\rho_j} u$,
yield a 
\begin{equation}
\label{DRS}
 \frac{\sqrt{2}}{2}\frac{1}{(2\pi)^{n-1}}
\int  s_{0k}(\rho_{\hat{k}},x)
 \widehat{g}_{bk}
 e^{i \rho_{\hat{k}} \cdot \eta_{\hat{k}}} 
  e^{ix\xi} d\eta  d\xi,
\end{equation}
which is to be included in 
 the $\Lambda^0_{b}$ operator.

We next look at 
$\Gamma^{-1} \circ A(u)$.  $A$ is
 a first order differential operator 
(tangential to all boundaries $\partial\Omega_j$
 for $1\le j\le m$).  
 Denote the
symbol of $A$ by
\begin{align*}
\sigma(A) =& \alpha(\rho,x,\xi,\eta)\\
=& \sum \alpha_j(\rho,x) \xi_j.
\end{align*}
In analogy with the symbol
 $s_{0j}$ we define 
\begin{align*}
&\alpha_{0_k}(\rho_{\hat{k}},x) :=
 \alpha(0_k,\rho_{\hat{k}},x)\\
& \alpha_{0_kj}(\rho_{\hat{k}},x):=
\alpha_j(0_k,\rho_{\hat{k}},x).
\end{align*}
We again use the expression in 
\eqref{vFormDNO} for $u$ to look at the
 action of 
  $\Gamma^{-1}\circ A$ on $u$,
  up to error terms.  
  
 We use
 the notation $\cdots$ in the following
 representation to indicate terms
 which upon being operated by
 $|D_{bk}|\circ R_k$ 
lead to terms of the form
\begin{equation}
\Psi^{-1}_{bk} g_{bk}+
\lre_{-1/2}^{jk}\left(g_{bj} \right)
+ \lre_{-3/2}^{jk} 
\left(
\partial_{\rho_j} u\big|_{\rho_j =0}
\right) 
+R^{-\infty}_{bk}
\label{modA}
\end{equation}
 in  
 \eqref{derFormFirst}.
   
We have
\begin{align*}
\Gamma^{-1} \circ& A(u) =
\frac{1}{(2\pi)^n} \int 
\alpha(\rho,x,\xi)
\frac{
	\widehat{u}
}
{\eta^2+\Xi^2}
e^{i \rho \cdot \eta} e^{ix\xi} d\eta  d\xi
 + \Psi^{-2}u\\
=&  \frac{1}{(2\pi)^n} \sum_j
 \int \alpha_{0_j}(\rho_{\hat{j}},x,\xi)
\frac{1 }
{\eta^2+\Xi^2_{bj}}
\frac{i\widehat{g}_{bj}}
{\eta_j+i\sqrt{\eta_{\hat{j}}^2+
		\Xi_{\hat{j}}^2}}
e^{i \rho \cdot \eta} e^{ix\xi} d\eta  d\xi+\cdots\\
=&   \frac{1}{4}\frac{1}{(2\pi)^{n-1}}
\sum_j
 \int 
\alpha_{0_j}(\rho_{\hat{j}},x,\xi)
\frac{\widehat{g}_{bj}}
{\eta_{\hat{j}}^2+
	\Xi_{\hat{j}}^2}
e^{\rho_j\sqrt{\eta_{\hat{j}}^2+
		\Xi_{bj}^2}}
e^{i \rho_{\hat{j}} \cdot \eta_{\hat{j}}}
e^{ix\xi} d\eta_{\hat{j}}  d\xi+\cdots.
\end{align*}

Then, the terms $2|D_{bk}|\circ R_k \circ
\Gamma^{-1}\circ A u$ yield
\begin{equation}
\label{DRA}
\frac{1}{2}\frac{1}{(2\pi)^{n-1}} \int 
\frac{\alpha_{0k}(\rho_{\hat{k}},x,\xi)}{
 \sqrt{\eta_{\hat{k}}^2+
 	\Xi_{bk}^2}}
\widehat{g}_{bk}
e^{i \rho_{\hat{k}} \cdot \eta_{\hat{k}}} 
e^{ix\xi} d\eta  d\xi
\end{equation}
to highest order,
 i.e. modulo terms 
of the form \eqref{modA}.

We are now ready to put together the 
 $\Lambda_{bk}^0$ operator according to
the terms from cases $i)$, $ii)$ and $iii)$
above.  
From \eqref{DRT}, \eqref{DRS}, and \eqref{DRA} above
we have
\begin{align}
\nonumber
\Lambda_{bk}^0 g_{bk} =& 
  -\frac{1}{4} \frac{1}{(2\pi)^{n-1}}
  \int \tau_k^{kk} 
  \frac{\xi_k^2}{\eta_{\hat{k}}^2
 +\Xi_{bk}^2} 
  \widehat{g}_{bk}(\eta_{\hat{k}},\xi)
  e^{i\rho_{\hat{k}}\cdot\eta_{\hat{k}}}
  e^{ix\xi} d\eta_{\hat{k}} d\xi\\
 \nonumber
   &+ \frac{\sqrt{2}}{2}\frac{1}{(2\pi)^{n-1}}
   \int  s_{0k}(\rho_{\hat{k}},x)
   \widehat{g}_{bk}
   e^{i \rho_{\hat{k}} \cdot \eta_{\hat{k}}} 
   e^{ix\xi} d\eta  d\xi\\
 \label{replace}
 &+ \frac{1}{2}\frac{1}{(2\pi)^{n-1}} \int 
 \frac{\alpha_{0k}(\rho_{\hat{k}},x,\xi)}{
 	\sqrt{\eta_{\hat{k}}^2+
 		\Xi_{bk}^2}}
 \widehat{g}_{bk}
 e^{i \rho_{\hat{k}} \cdot \eta_{\hat{k}}} 
 e^{ix\xi} d\eta  d\xi
\end{align}
 modulo $\Psi^{0}_{\varepsilon,bk}g_b$ (from 
cases $i)$ and $ii)$).  Recall that 
\begin{equation*}
 \| \Psi^{0}_{\varepsilon,bk}g_{bk} 
\|_{W^{\gamma,s}(\partial\Omega_k\cap\partial\Omega,
	 \rho_{\hat{k}},\lambda )}
  \lesssim s.c. \|g_{bk} \|_{W^{\gamma,s}(\partial\Omega_k\cap\partial\Omega,
  	\rho_{\hat{k}},\lambda )}.
\end{equation*}

We also note the error terms of the form
\begin{equation*}
C_kg_b=
\Psi^{-1}_{bk} g_{bk}+
 \sum_{j} \lre_{-1/2}^{jk} 
 g_{bj}
 +  \sum_{j} 
  \lre_{-3/2}^{jk} 
 \left(
 \partial_{\rho_j} u\big|_{\rho_j =0}
 \right) 
 +R^{-\infty}_{bk}
\end{equation*}
in the $B_kg_b$ terms in 
 \eqref{derFormFirst} and those 
 resulting from
 the above expansions.
The $\lre_{-3/2}^{jk}$ terms
 remain in the form
\eqref{form3/2} with additional 
 terms of the form
\begin{equation*}
|D_k|\circ R_k \circ \Psi^{-2} \circ 
  \lre_{-3/2}^{lj}.
\end{equation*}

We thus have from \eqref{derFormFirst}
\begin{thrm}
\label{thrmDNO}
 Let $N^-_k$ be the DNO operator mapping the
boundary values of the homogeneous 
 Dirichlet problem \eqref{homoDirProb}
 to the boundary values on 
$\partial\Omega_k \cap
 \partial\Omega$ of the outward normal 
 derivative of the solution.
Then
\begin{multline*}
\nonumber
N_k^-g_b
=|D_{bk}| g_{bk}
- 2\sum_{j\neq k} 
|D_{bk}|\circ R_k \circ \Theta_j^+ g_{bj}\\
+ 2
\sum_{{j\neq k} \atop 
	{l\neq j}} |D_{bk}|\circ R_k \circ 
\Theta_j^+ \circ R_j \circ 
\Theta_l^+ g_{bl}
+
\Lambda^{0}_{bk}\left(g_{bk}\right)
+C_kg_b,
\end{multline*}
where
 in the microlocal support of
a cutoff, $\psi^{-}_{N,bk}$, we have
 modulo operators of the form
$\Psi^{0}_{\varepsilon,bk}$ for large $N$,
\begin{align*}
 \nonumber
 \Lambda_{bk}^0  =& 
 -\frac{1}{4} Op\left( \tau_k^{kk} 
 \frac{\xi_k^2}{\eta_{\hat{k}}^2
 	+\Xi_{bk}^2}\right) + 
\frac{\sqrt{2}}{2}Op
 \left( s_{0k}(\rho_{\hat{k}},x)
\right)+
 \frac{1}{2}Op\left(
 \frac{\alpha_{0k}(x,\rho_{\hat{k}},\xi)}{
 	\sqrt{\eta_{\hat{k}}^2+
 		\Xi_{bk}^2}}
\right),
\end{align*}
and, for $0 \le \gamma \le 1$
 and $\lambda \ge 0$,
\begin{multline*}
 \big\|  C_k g_b 
 \big\|_{W^{\gamma,s}
 	\left(
 	\partial\Omega_k\cap \partial\Omega,
 	\rho_{\hat{k}},\lambda
 	\right)} 
  \lesssim
   \sum_j \left\|
g_{bj}
\right\|_{W^{\gamma-1/2,s}(\partial\Omega_j\cap\partial\Omega,
	\rho_{\hat{j}},\lambda )}
\\
+ \| u \|_{-\infty}+\left\|
\partial_{\rho}   u \big|_{\partial\Omega}
\right\|_{-\infty} 
+ \|
g_b \|_{-\infty}.
\end{multline*}
\end{thrm}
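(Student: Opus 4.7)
The plan is to assemble the theorem as a consequence of the detailed symbol calculus already carried out in Section \ref{secDNO}, rather than re-proving anything from scratch. First I would start from the identity \eqref{derivToIterate} for $\partial_{\rho_k}u|_{\rho_k=0}$ obtained by taking non-tangential limits in \eqref{vPoissonBasic} and inverting the principal symbol $2|D_{bk}|$, and then iterate once to reach \eqref{derFormFirst}. This produces the leading three terms in $N_k^-g_b$ (the $|D_{bk}|g_{bk}$ term and the two sums built out of compositions with $\Theta^+_j$) and isolates a residual zero-order contribution on the diagonal piece $g_{bk}$ which must be identified as $\Lambda^0_{bk}$, together with a remainder $B_kg_b$ of the form already given in \eqref{derFormFirst}.

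Next I would compute $\Lambda^0_{bk}$ explicitly by tracing its three sources, labeled (i), (ii), (iii) in the text. For (i), I would use the expansion \eqref{orderMin3} of $\sigma(\Gamma^{-1})$ applied to the boundary distribution $\partial_{\rho_k}u|_{\rho_k=0}\times \delta_k$; after composing with $-2|D_{bk}|\circ R_k$, the contribution reduces to the operator $A_{bk}^{-1}$, which in the microlocal region $\psi^-_{N,bk}$ falls into $\Psi^{-1}_{\varepsilon,bk}$. For (ii), the same order $-3$ symbol composed with $i\eta_k$ acts on $g_{bk}$; integrating in $\eta_k$ by residues and restricting to $\partial\Omega_k$ yields the first displayed contribution in \eqref{DRT}. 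For (iii), I would substitute the Poisson representation \eqref{vFormDNO2} for $u$ into $\Gamma^{-1}\circ(S_j\partial_{\rho_j}u + Au)$, separating the integral involving $g_{bj}$ from that involving $u$, and evaluating each via residues in $\eta_j$. This produces the $s_{0k}$ contribution \eqref{opSk}, its partial cancellation \eqref{opSk2} coming from the $u$ integral when $l=k=j$, and the first-order tangential contribution \eqref{DRA}. Summing \eqref{DRT}, \eqref{DRS}, \eqref{DRA} gives the stated form of $\Lambda^0_{bk}$ modulo $\Psi^0_{\varepsilon,bk}$.

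For the estimate of $C_kg_b$, I would collect every error term produced along the way: the $\lre^{jk}_{-1/2}g_{bj}$ and $\lre^{jk}_{-3/2}(\partial_{\rho_j}u|_{\partial\Omega_j})$ terms in \eqref{derForm-1}, the additional $\Psi^{-1}_{bk}g_{bk}$ arising from \eqref{derFormFirst} and the case (i), (ii) contributions when the microlocal region is not taken into account, and the $R^{-\infty}_{bk}$ smoothing terms. Applying Theorem \ref{weightedLRE} with $\alpha-\gamma = 1/2$ to the $\lre^{jk}_{-1/2}$ terms gives the $W^{\gamma-1/2,s}$ bound on $g_{bj}$, while the $\lre^{jk}_{-3/2}$ terms combined with the weighted DNO estimate \eqref{weightedDv} of Theorem \ref{thrmDNOWeight} yield a further bound by $\sum_j\|g_{bj}\|_{W^{\gamma-1/2,s}}$. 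The $\Psi^{-1}_{bk}$ and $R^{-\infty}_{bk}$ contributions are absorbed into the three low-regularity tail terms $\|u\|_{-\infty}$, $\|\partial_\rho u|_{\partial\Omega}\|_{-\infty}$, $\|g_b\|_{-\infty}$ exactly as done after \eqref{inftyEst}.

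The main obstacle is bookkeeping rather than any single hard estimate: one must keep track of which contributions survive as genuine order-zero operators and which are swept into $\Psi^0_{\varepsilon,bk}$, and one must verify that every time a factor $|D_{bk}|$ is composed with one of the order $-2$ or $-3$ remainders produced during the derivation, the result either lands in the explicit part of $\Lambda^0_{bk}$ or is properly accounted for by Theorem \ref{weightedLRE} at the right $\gamma$-value (so that the composition estimates from Section \ref{secNotation} apply). Once this accounting is done, the theorem follows by direct substitution.
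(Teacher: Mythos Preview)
Your proposal is correct and follows essentially the same approach as the paper: the theorem is assembled in Section~\ref{secDNO} exactly as you describe, starting from \eqref{derivToIterate}/\eqref{derForm-1}, iterating to \eqref{derFormFirst}, then computing $\Lambda^0_{bk}$ through the three cases (i)--(iii) via \eqref{DRT}, \eqref{DRS}, \eqref{DRA}, and finally collecting the remainder terms into $C_kg_b$ with estimates coming from Theorem~\ref{weightedLRE} and the normal-derivative bounds of Section~\ref{secPoisson}. Your identification of the main difficulty as bookkeeping rather than a new analytic idea is accurate.
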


\section{Green's operator}
\label{secGreens}

The Green's operator 
 for the $\square$ operator 
  is defined as the
 solution operator, $G$, to
\begin{align*}
& 2\square G(g) =g \qquad \mbox{in } \Omega\\
& G(g) = 0 \qquad \mbox{on }
\partial\Omega.
\end{align*}
As we did with the Poisson operator, we will
 find an expression for the
Green's operator, modulo some smoothing terms.  For
 this purpose, we use again the notation
  $R^{-\infty}$ to refer to smoothing terms, but in this
section $R^{-\infty}$ will 
 mean
 $ \Psi^{-\infty}$ applied to 
 $u=G(g)$, or
$ \Psi^{-\infty}$ applied to the boundary terms
$\partial_{\rho} u 
\big|_{\partial\Omega} $
 Furthermore, $R_{bk}^{-\infty}$ will denote 
$\Psi_{bk}^{-\infty}$ applied to
 $\partial_{\rho_k} u 
\big|_{\partial\Omega_k} $ 
 and to denote terms
described by $R_k\circ R^{-\infty}$.
 For instance, we have, for any $\alpha\ge 0$,
\begin{align}
\nonumber
\| R_{bk}^{-\infty}
 &\|_{W^{\alpha,s}(\partial\Omega_k \cap \partial\Omega,
 	\rho_{\hat{k}},\lambda)}\\
 \nonumber
& \lesssim 
\| R_{k}\circ
 \Psi^{-\infty}u
\|_{W^{\alpha+s}(\partial\Omega_k)}   +
 \| R_{k}\circ
 \Psi^{-\infty}\left(
\partial_{\rho} u 
\big|_{\partial\Omega} 
 \right)
 \|_{W^{\alpha+s}(\partial\Omega_k)}\\
\nonumber
&\qquad +  \| 
\partial_{\rho_k} u 
\big|_{\partial\Omega_k} 
\|_{W^{-\infty}(\partial\Omega_k)}\\
 \nonumber
&\lesssim
 \|
 \Psi^{-\infty}u
 \|_{W^{\alpha+\epsilon+1/2+s}(\Omega_k)}+
 \|
 \Psi^{-\infty}\left(
 \partial_{\rho} u 
 \big|_{\partial\Omega} 
 \right)
 \|_{W^{\alpha+\epsilon+1/2+s}(\Omega_k)}\\
\nonumber
&\qquad +  \| 
\partial_{\rho_k} u 
\big|_{\partial\Omega_k} 
\|_{W^{-\infty}(\partial\Omega_k)}\\
 \label{RbInftyEst}
& \lesssim
 \| u
 \|_{L^2(\Omega)} +   
 \|
 \partial_{\rho} u 
 \big|_{\partial\Omega} 
 \|_{W^{-\infty}(\partial\Omega)},
\end{align}
where $\epsilon\ge 0$ is chosen so that
 $\alpha+\epsilon >0$,
and thus so that the Trace Theorem
 applies.
The $L^2$ norm of $u$ can be bounded by
 $\|g\|_{L^2(\Omega)}$ (see Theorem
  .5 of \cite{JK95}).  To obtain
estimates for the boundary values of
 the normal derivative, we argue as in 
  Section \ref{secPoisson}:
\begin{align}
\nonumber
\partial_{\rho_j}u
\big|_{\rho_j =0 } 
=& \int_{-\infty}^0 \partial_{\rho_j}^2 u d\rho_j \\
\label{normalInhomoInt}
=& \int_{-\infty}^0 \Lambda_{t_j}^2 u
d\rho_j
 +\int_{-\infty}^0 \Lambda_{t_j}^1 g
 d\rho_j
,
\end{align}
where $\Lambda^i_{t_j}$ is an operator 
 of order $i$ tangential to $\partial\Omega_j$.
Then applying a smoothing tangential
 operator yields
\begin{equation*}
 \|
\partial_{\rho} u 
\big|_{\partial\Omega} 
\|_{W^{-\infty}(\partial\Omega)}
 \lesssim \|u\|_{L^2(\Omega)}
 +\|g\|_{L^2(\Omega)}.
\end{equation*}
Putting all this together yields
\begin{align*}
 \| R_{bk}^{-\infty}
 \|_{W^{\alpha,s}(\partial\Omega_k \cap \partial\Omega,
 	\rho_{\hat{k}},\lambda)}
\lesssim&   \|u\|_{L^2(\Omega)}
 +\|g\|_{L^2(\Omega)}\\
\lesssim& \|g\|_{L^2(\Omega)}
\end{align*}
for any $\lambda\ge 0$.

Using the notation of Section
\ref{secPoisson}, with 
 the $\Gamma$ operator defined as in 
\eqref{GammaDefn}, 
for the solution using Green's operator, 
 we write the interior equation as
\begin{equation*}
\Gamma u + 
 \sqrt{2}S \partial_{\rho} u +
  A u = g,
\end{equation*}
where $\sqrt{2}S\circ \partial_{\rho} u $
 is the short-hand notation for
  the sum of terms
\begin{equation*}
\sqrt{2}S\circ \partial_{\rho} u :=
  \sqrt{2}\sum_{j=1}^{m} S_j
 \circ(\partial_{\rho_j} u)
\end{equation*}
as in \eqref{origPoissonFourier}, and
with boundary conditions $u=0$ on $\rho_i=0$,
 $i=1,\ldots,m$.

Applying the operator,
 $\Gamma^{\sharp}$, with symbol
$
\frac{1}{\eta^2 +
	\Xi^2}
$
we get, in a similar manner to
 \eqref{errorPoisson} above,
\begin{equation*}
u= \sum_j \Gamma^{\sharp}
\left(\partial_{\rho_j} u \big|_{\rho_j =0} \right)
+ \Psi^{-3}
\left(\partial_{\rho} u 
\big|_{\partial\Omega} \right)
+  \Gamma^{\sharp} g
+  \Psi^{-3} g +\Psi^{-1}u.
\end{equation*}
Solving for $u$ yields
\begin{equation}
\label{vExp}
u= \sum_j \Gamma^{\sharp}
\left(\partial_{\rho_j} u
\big|_{\rho_j =0} \right)
+  \Psi^{-3}
\left(\partial_{\rho} u 
\big|_{\partial\Omega} \right)
+  \Gamma^{\sharp} g
+  \Psi^{-3} g + R^{-\infty}.
\end{equation}

We now obtain an expression
  for $\partial_{\rho_k}  
u|_{\rho_k=0}$.  We use
\begin{align*}
R_k\circ\partial_{\rho_k}  \circ \sum_j
 \Gamma^{\sharp} \left(\partial_{\rho_j} u 
\big|_{\rho_j =0} \right)
= \frac{1}{2}\partial_{\rho_k}u 
\big|_{\rho_k =0}
+ \sum_{j\neq k}
 \lre_{-1/2}^{jk}\left(\partial_{\rho_j} u 
\big|_{\rho_j =0} \right)
\end{align*}
and
\begin{align*}
\partial_{\rho_k}
\circ &\Gamma^{\sharp} g
= \frac{1}{(2\pi)^n}\int
\frac{i\eta_k}{\eta_k^2+\left(
	\eta_{\hat{k}}^2+\Xi^2\right)}
\widehat{g} (\eta,\xi)  
e^{i x\xi}e^{i\rho\eta} d\xi d\eta\\
=& 
\frac{1}{(2\pi)^n}\int
\frac{i\eta_k}{\eta_k^2+\left(
	\eta_{\hat{k}}^2+\Xi^2\right)}
\int_{-\infty}^0 \widetilde{g}
(t_k,\eta_{\hat{k}},\xi) e^{-it_k \eta_k }
dt_k e^{i x\xi}e^{i\rho\eta} d\xi d\eta\\
=& \frac{1}{(2\pi)^n}\int
\int_{-\infty}^0 
\frac{i\eta_k}{\eta_k^2+\left(
	\eta_{\hat{k}}^2+\Xi^2\right)}
\widetilde{g}
(t_k,\eta_{\hat{k}},\xi) 
e^{i(\rho_k-t_k) \eta_k }
dt_k e^{i x\xi}
e^{i\rho_{\hat{k}}\eta_{\hat{k}}}
d\xi d\eta\\
=& \frac{1}{(2\pi)^{n-1}}
\frac{1}{2}\int
\int_{-\infty}^0 
\mbox{sgn}(t_k-\rho_k)
\widetilde{g}
(t_k,\eta_{\hat{k}},\xi) \times \\
& \qquad \qquad 
\qquad \qquad \qquad \qquad \qquad  
e^{-|\rho_k-t_k|
	\sqrt{\eta_{\hat{k}}^2  + \Xi^2}} 
dt_k e^{i x\xi}
e^{i\rho_{\hat{k}}\eta_{\hat{k}}}
d\xi d\eta_{\hat{k}}\\
\end{align*}
which, in the limit $\rho_k\rightarrow 0$, tends to
\begin{equation}
\label{limitDer}
- \frac{1}{(2\pi)^{n-1}}\frac{1}{2}
\int \int_{-\infty}^0 
\widetilde{ g}(t_k,\eta_{\hat{k}},\xi)
e^{t_k\sqrt{\eta_{\hat{k}}^2+\Xi^2_{bk}}} 
dt_k e^{i x\xi} 
e^{i\rho_{\hat{k}}\eta_{\hat{k}}}
d\xi d\eta_{\hat{k}}
.
\end{equation}
Let $\Theta^{-}_j\in \Psi^{-1}(\Omega)$ be the operator with symbol
\begin{equation*}
\sigma(\Theta^{-}_j) = \frac{i}
{\eta_j-i 
	\sqrt{\eta_{\hat{j}}^2+\Xi^2_{bj}}}.
\end{equation*}
We can then rewrite the term in 
\eqref{limitDer} as
$ \frac{1}{2}R_k\circ \Theta_k^- g$, and
\begin{equation*}
R_k\circ \partial_{\rho_k}
\circ \Gamma^{\sharp} g
= \frac{1}{2}R_k\circ\Theta_k^- g.
\end{equation*}

Returning to \eqref{vExp} and applying
$R_k\circ \partial_{\rho_k}$ yields
\begin{align*}
\partial_{\rho_k} u|_{\rho_k=0} =&
\frac{1}{2} \partial_{\rho_k}  
u|_{\rho_k=0} 
+\frac{1}{2} R_k \circ \Theta_k^-g +
\sum_{j\neq k} \lre_{-1/2}^{jk}\left(\partial_{\rho_j} u 
\big|_{\rho_j =0} \right)\\
&+ 
\Psi_{bk}^{-1}(\partial_{\rho_k} u|_{\rho_k=0} )
+R_k\circ \Psi^{-2} g +R_{bk}^{-\infty}.
\end{align*}
Thus
\begin{equation}
\label{RNormTheta}
\partial_{\rho_k}  
u|_{\rho_k=0} = R_k\circ \Theta_k^-g
+ \sum_{j\neq k}\lre^{jk}_{-1/2}\left(\partial_{\rho_j}
u \big|_{\rho_j =0} \right)
+ R_k\circ \Psi^{-2} g +R_{bk}^{-\infty}.
\end{equation}

Our aim is to provide (weighted) estimates for
 $G(g)$.  To deduce
  these with the help
  of \eqref{vExp}, we
   need estimates for boundary values of 
  the normal derivatives, and we start with estimating the
term $\Theta^-g$ in \eqref{RNormTheta}
 (see also Theorem \ref{PsiInt}, whose
  proof can be read from that below).
In the following Theorems (and Corollaries), we 
 will assume the data function (form) satisfies
\begin{equation*}
 g\in W^{0,s}(\Omega,\rho_{\hat{j}},
 \lambda)
\end{equation*}
for some $s,\lambda \ge 0$ and for all
 $j=1,\ldots m$.
We start with
 \begin{thrm}
	\label{ThetaInt}
\begin{equation*}
 \| \Theta^-_j g
	\|_{W^{1,s}(\Omega,\rho_{\hat{j}},
		\lambda)}
 \lesssim
 \left\|
	g
 \right\|_{W^{0,s}(\Omega,\rho_{\hat{j}},
 	\lambda)}.
\end{equation*}
\end{thrm}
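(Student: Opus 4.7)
The plan is to reduce the estimate to an explicit integral representation of $\Theta^-_j g$ and then control its derivatives via the factorization $(\partial_{\rho_j}+|D_{bj}|)\Theta^-_j g = -g$, which follows directly from the symbol $\sigma(\Theta^-_j) = -1/(i\eta_j+|D_{bj}|)$. Concretely, I would first perform the $\eta_j$-inversion in the definition of $\Theta^-_j$ by residue calculus at the upper half-plane pole $\eta_j = i\sqrt{\eta_{\hat{j}}^2+\Xi_{bj}^2}$, obtaining after tangential Fourier transform
\begin{equation*}
\widehat{\Theta^-_j g}(\rho_j,\eta_{\hat{j}},\xi)
= -\int_{-\infty}^{\rho_j} e^{-(\rho_j-t_j)\sqrt{\eta_{\hat{j}}^2+\Xi_{bj}^2}}\,
   \widehat{g}(t_j,\eta_{\hat{j}},\xi)\,dt_j,
\end{equation*}
with the understanding that $\Xi_{bj}$ is an operator in the tangential variables handled as in Section \ref{secPoisson}. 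A direct differentiation confirms the identity $(\partial_{\rho_j}+|D_{bj}|)\Theta^-_j g = -g$, so the normal derivative is controlled by $g$ together with one tangential application of $|D_{bj}|$ to $\Theta^-_j g$.

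Next, I would obtain the unweighted estimate $\|\Theta^-_j g\|_{W^{1+s}(\Omega)} \lesssim \|g\|_{W^{s}(\Omega)}$. A Schur-test bound on the kernel $K(\rho_j,t_j) = e^{-(\rho_j-t_j)\sqrt{\eta_{\hat{j}}^2+\Xi_{bj}^2}} \mathbf{1}_{t_j<\rho_j}$ gives
$\||D_{bj}|\Theta^-_j g\|_{L^2(\Omega)} \lesssim \|g\|_{L^2(\Omega)}$, and hence via the factorization identity $\|\partial_{\rho_j}\Theta^-_j g\|_{L^2(\Omega)} \lesssim \|g\|_{L^2(\Omega)}$; tangential derivatives in the other directions are handled by Plancherel on the kernel representation in exactly the manner of the example in Section \ref{secNotation}. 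Iterating tangential derivatives and repeatedly applying the factorization identity to trade a $\rho_j$-derivative for $g$ plus one tangential $|D_{bj}|$ yields the claim for all integer $s\ge 0$, and non-integer $s$ follows by interpolation.

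Finally, to incorporate the weights, I would exploit that $\rho_{\hat{j}}$ involves only the coordinates $\rho_i$ for $i\ne j$, which are tangential to $\partial\Omega_j$. Thus the multiplier $\rho_{\hat{j}}^{(s\lambda - r\lambda)\times(l-1)}$ is constant in the variable $\rho_j$ over which the kernel integrates, and its commutator with $|D_{bj}|$ only picks up lower-order operators from the $x,\rho_{\hat{j}}$-dependence of the symbol $\Xi_{bj}$. For each $0\le r\le s$ I would therefore move the weight past $\Theta^-_j$ in
\begin{equation*}
\bigl\|\rho_{\hat{j}}^{(s\lambda - r\lambda)\times(l-1)}\, \Theta^-_j g \bigr\|_{W^{1+s-r}(\Omega)},
\end{equation*}
apply the unweighted estimate to the commuted expression, and absorb each commutator term into a norm of $g$ corresponding to the index $r+1$, which is already present in the right-hand side of the weighted definition. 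The main obstacle is bookkeeping these commutators: one must verify that each commutator $[\rho_{\hat{j}}^{(s\lambda - r\lambda)\times(l-1)},\Theta^-_j]$ gains exactly one order of smoothing for each factor of $\rho_i$ lost, so that the resulting terms are controlled by the $r+1$ summand in the definition of $W^{0,s}(\Omega,\rho_{\hat{j}},\lambda)$ rather than producing norms outside the scale. This is precisely the structural reason why the weight power $\lambda$ enters the definition uniformly across all $r$, and is the content of the computation that the paper refers to as being able to be read off from this model proof for the more general Theorem \ref{PsiInt}.
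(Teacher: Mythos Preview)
Your approach is correct but takes a markedly different and more laborious route than the paper. The paper's proof is essentially two lines: it observes that $\Theta^-_j g = \Psi^{-1} g^{E_I}$, where $g^{E_I}$ is the extension of $g$ by zero to all of $\mathbb{R}^{2n}$, and then commutes the weight through the pseudodifferential operator via the standard asymptotic expansion
\[
\rho_{\hat{j}}^{r\lambda\times(m-1)}\Psi^{-1}
=\sum_{l\le r}\Psi^{-1-(r-l)}\rho_{\hat{j}}^{l\lambda\times(m-1)},
\]
applying ordinary Sobolev mapping properties of $\Psi^{-1-(r-l)}$ on $\mathbb{R}^{2n}$ term by term. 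The extension by zero is harmless because only the $L^2$ level of $g$ is required. No kernel formula, no factorization identity, no Schur test.

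Your route via the explicit integral representation and the identity $(\partial_{\rho_j}+|D_{bj}|)\Theta^-_j g=-g$ is self-contained and would work, but it duplicates machinery: the $W^{1+s}\leftarrow W^s$ boundedness you establish by hand is exactly what $\Theta^-_j\in\Psi^{-1}$ gives for free once $g$ is extended by zero, and your final commutator bookkeeping is precisely the paper's expansion above. What your approach buys is an argument that does not rely on the global $\Psi DO$ calculus (useful if one were worried about the variable-coefficient symbol $\Xi_{bj}$), whereas the paper's approach buys brevity and makes transparent that nothing special about $\Theta^-_j$ is used beyond its order --- which is why the paper remarks that the proof of Theorem~\ref{PsiInt} can be read off from it.
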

\begin{proof}
 We recall the convention that
$\Theta^-_j g$ for $g$ defined on $\Omega$
 refers to $\Theta^-_j g^{E_I}$, where
the superscript $E_I$ denotes extensions 
(defined locally) by zero
 across $\rho_i=0$
  for $i\in I :=\{1, \dots, m\}$.

Since
 $\Theta^-_j g=\Psi^{-1}g^{E_I}$, we have
\begin{equation*}
 \| \Theta^-_j g \|_{W^{1,s}(\Omega,\rho_{\hat{j}},
 	\lambda)}
 \lesssim \sum_{r\le s} 
  \left\|\rho_{\hat{j}}^{r\lambda\times(m-1)}
   \Psi^{-1}g^{E_I}
 \right\|_{W^{1+r}(\mathbb{R}^n)},
\end{equation*}
whereas for each $r\le s$ it holds that
\begin{align*}
\left\| 
 \rho_{\hat{j}}^{r\lambda\times(m-1)}
\Psi^{-1}g^{E_I}
\right\|_{W^{1+r}(\mathbb{R}^n)}
 \lesssim&
\sum_{l\le r}
\left\|
 \Psi^{-1-(r-l)} 
  \rho_{\hat{j}}^{l\lambda\times(m-1)}g^{E_I}
\right\|_{W^{1+r}(\mathbb{R}^n)} \\
\lesssim&
\left\|
 g^{E_I}
\right\|_{W^{0,r}(\mathbb{R}^n,\rho_{\hat{j}},
	\lambda)}
  .
\end{align*}
Summing over $r\le s$ yields
\begin{align*}
 \| \Theta^-_j g \|_{W^{1,s}(\Omega,\rho_{\hat{j}},
 	 \lambda)}
 \lesssim&
  \left\|
  g^{E_I}
\right\|_{W^{0,s}(\mathbb{R}^n,\rho_{\hat{j}},
	\lambda)}\\
\lesssim& 
 \left\|
 g
 \right\|_{W^{0,s}(\Omega,\rho_{\hat{j}},
 	\lambda)}
.
\end{align*}
\end{proof}

As a corollary of the Sobolev Trace Theorem
 (applied to
  each smooth domain $\Omega_j$) we obtain
\begin{cor}
\label{RThetaEst}
\begin{equation*}
 \|  R_j\circ \Theta_j^-g
\|_{W^{1/2,s}
	(\partial\Omega_j\cap \partial\Omega,
 \rho_{\hat{j}},\lambda)}
\lesssim
\left\|
g
\right\|_{W^{0,s}(\Omega,\rho_{\hat{j}},
	\lambda)}.
\end{equation*}
\end{cor}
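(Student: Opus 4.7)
The strategy is to obtain the corollary directly from Theorem \ref{ThetaInt} by applying the Sobolev Trace Theorem on the smooth domain $\Omega_j$, taking advantage of the fact that the weight $\rho_{\hat{j}}$ does not involve $\rho_j$ and therefore commutes with the restriction operator $R_j$.

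Concretely, the plan is as follows. Unfold the norm on the left according to Definition \ref{defnWghtSpace}, which expresses it as
\begin{equation*}
\sum_{0\le r\le s}
\left\| \rho_{\hat{j}}^{(s\lambda-r\lambda)\times(m-1)} R_j \Theta_j^- g \right\|_{W^{1/2+s-r}(\partial\Omega_j\cap\partial\Omega)}.
\end{equation*}
For each fixed $r$, since $\rho_{\hat{j}}$ contains no factor of $\rho_j$, the weight commutes with $R_j$, so this term equals
\begin{equation*}
\left\| R_j\!\left(\rho_{\hat{j}}^{(s\lambda-r\lambda)\times(m-1)} \Theta_j^- g\right) \right\|_{W^{1/2+s-r}(\partial\Omega_j\cap\partial\Omega)}.
\end{equation*}

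Next I would apply the ordinary Sobolev Trace Theorem on the smooth strictly pseudoconvex domain $\Omega_j$ (on which $\Theta_j^- g$ is a well-defined function, since $\Theta_j^-=\Psi^{-1}$ acts on the zero extension $g^{E_I}$), which yields a loss of exactly $1/2$ in Sobolev order. This bounds the above by
\begin{equation*}
\left\| \rho_{\hat{j}}^{(s\lambda-r\lambda)\times(m-1)} \Theta_j^- g \right\|_{W^{1+s-r}(\Omega_j)}.
\end{equation*}
Summing over $0\le r\le s$ gives the weighted norm $\|\Theta_j^- g\|_{W^{1,s}(\Omega,\rho_{\hat{j}},\lambda)}$ (up to enlarging $\Omega$ to $\Omega_j$, which is harmless after multiplication by a local cutoff). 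Invoking Theorem \ref{ThetaInt} then controls this by $\|g\|_{W^{0,s}(\Omega,\rho_{\hat{j}},\lambda)}$, establishing the corollary.

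There is no real obstacle beyond bookkeeping: the key structural fact is the independence of $\rho_{\hat{j}}$ from $\rho_j$, which lets weights pass through $R_j$, so the weighted trace inequality reduces cleanly to the unweighted one on each piece of the sum. If a more careful justification for pulling the weight inside $\Psi^{-1}$ is wanted, one can instead commute $\rho_{\hat{j}}$ past $\Theta_j^-$ at the cost of lower-order pseudodifferential errors — exactly as already done in the proof of Theorem \ref{ThetaInt} — and then apply the trace theorem term by term.
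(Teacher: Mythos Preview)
Your overall strategy matches the paper's exactly: unfold the weighted norm, commute the weight $\rho_{\hat{j}}$ through $R_j$, apply the trace theorem on the smooth domain $\Omega_j$, and then invoke Theorem \ref{ThetaInt}.

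There is, however, a genuine gap at the step you flag parenthetically. After the trace theorem you have
\[
\left\| \rho_{\hat{j}}^{(s\lambda-r\lambda)\times(m-1)} \Theta_j^- g \right\|_{W^{1+s-r}(\Omega_j)},
\]
a norm over $\Omega_j$, whereas Theorem \ref{ThetaInt} as stated only controls the corresponding norm over $\Omega\subset\Omega_j$. Since $\Omega\subset\Omega_j$, the $\Omega_j$-norm dominates the $\Omega$-norm, so the inequality goes the wrong way. Your remark that ``enlarging $\Omega$ to $\Omega_j$ is harmless after multiplication by a local cutoff'' does not fix this: a cutoff in base space cannot manufacture control of $\Theta_j^- g$ on $\Omega_j\setminus\Omega$, and the relevant obstruction is exactly the behavior across the faces $\rho_i=0$, $i\neq j$.

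The paper closes this gap explicitly. It introduces the zero-extension operator $Z_{\Omega,j}$ from $\Omega$ to $\Omega_j$ and invokes Lemma 4.2 of \cite{Eh18_halfPlanes} to obtain
\[
\left\| Z_{\Omega,j}\!\left(\rho_{\hat{j}}^{r\lambda\times(m-1)}\Theta_j^- g\right)\right\|_{W^{1+r}(\Omega_j)}
\lesssim
\left\| \rho_{\hat{j}}^{r\lambda\times(m-1)}\Theta_j^- g\right\|_{W^{1+r}(\Omega)},
\]
the point being that the weight $\rho_{\hat{j}}^{r\lambda\times(m-1)}$ already vanishes on the faces $\rho_i=0$ across which one extends by zero, so the zero extension preserves $W^{1+r}$. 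An alternative repair, closer to what you may have had in mind, is to note that the \emph{proof} of Theorem \ref{ThetaInt} actually establishes the stronger intermediate bound $\|\rho_{\hat{j}}^{r\lambda\times(m-1)}\Psi^{-1}g^{E_I}\|_{W^{1+r}(\mathbb{R}^n)}\lesssim\|g\|_{W^{0,s}(\Omega,\rho_{\hat{j}},\lambda)}$, from which the $\Omega_j$-estimate you need follows immediately by restriction. Either route works; the ``local cutoff'' argument does not.
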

\begin{proof}
	 For $\varphi$, a function defined on
$\mathbb{R}^n$, 
let $Z_{\Omega,j}(\varphi)$ denote the
  extension from $\Omega$ to
 $\Omega_j$ defined by
\begin{equation*}
Z_{\Omega,j}(\varphi) :=
  \begin{cases}  \varphi &
   \mbox{in } \Omega  \\ 
 0 & \mbox{in } \Omega_j\setminus \Omega.
 \end{cases}
\end{equation*}
We then have
\begin{align}
\nonumber
  \|  R_j\circ \Theta_j^-g
\|_{W^{1/2,s}
	(\partial\Omega_j\cap \partial\Omega,
	\rho_{\hat{j}},\lambda)}
\simeq&
 \sum_{r\le s}
 \left\|    
 R_j\circ Z_{\Omega,j} \left( 
 \rho_{\hat{j}}^{r\lambda\times(m-1)}  \Theta_j^-g
 \right)
 \right\|_{W^{1/2+r}
 	(\partial\Omega_j)} 
\\
 \label{RThetag}
 \lesssim&
 \sum_{r\le s}
\left\|    
 Z_{\Omega,j} \left( 
\rho_{\hat{j}}^{r\lambda\times(m-1)}  \Theta_j^-g
\right)
\right\|_{W^{1+r}
	(\Omega_j)}  .
\end{align}
 Furthermore,
since
\begin{equation*}
 \rho_{\hat{j}}^{r\lambda\times(m-1)}  \Theta_j^-g
  \in W^{1+r}
  (\Omega)
\end{equation*}
by Theorem \ref{ThetaInt}, we have
\begin{equation*}
 Z_{\Omega,j} \left( 
\rho_{\hat{j}}^{r\lambda\times(m-1)}  \Theta_j^-g
\right)\in W^{1+r}
(\Omega_j)
\end{equation*}
with
\begin{equation*}
 \left\|    
 Z_{\Omega,j} \left( 
\rho_{\hat{j}}^{r\lambda
	 \times(m-1)}  \Theta_j^-g
\right)
\right\|_{W^{1+r}(\Omega_j)} 
\lesssim 
  \left\|    
 \rho_{\hat{j}}^{r\lambda\times(m-1)} 
  \Theta_j^-g
 \right\|_{W^{1+r}(\Omega)} 
\end{equation*}
by Lemma 4.2 in 
 \cite{Eh18_halfPlanes}.

Inserting these last estimates into \eqref{RThetag}
 gives
\begin{align*}
  \|  R_j\circ \Theta_j^-g
\|_{W^{1/2,s}
	(\partial\Omega_j\cap \partial\Omega,
	\rho_{\hat{j}},\lambda)}
 \lesssim& 
\sum_{r\le s}
  \left\|    
\rho_{\hat{j}}^{r\lambda\times(m-1)}  \Theta_j^-g
\right\|_{W^{1+r}(\Omega)} 
\\
\lesssim&
 \|  \Theta_j^-g
\|_{W^{1,s}
	(\Omega,
	\rho_{\hat{j}},\lambda)}.
\end{align*}
The Corollary now follows from 
another application of Theorem \ref{ThetaInt}.
\end{proof}

A similar proof can be used to estimate 
 the terms which appear as 
$R_j \circ \Psi^{-2} g$ in \eqref{RNormTheta}
 above.  We obtain
\begin{align*}
\|  R_j\circ \Psi^{-2} g
 \|_{W^{1/2,s}
 	(\partial\Omega_j\cap \partial\Omega,
 	\rho_{\hat{j}},\lambda)}
 \lesssim&
 \|  g
 \|_{W^{0,s}
 	(\Omega, \rho_{\hat{j}},\lambda)}.
\end{align*}

We can now establish estimates for the boundary 
 values of normal derivatives of the 
Green's solution.
\begin{cor}
\label{corGreenNormDerEst}
\begin{equation*}
\sum_j
\|  
\partial_{\rho_j} u|_{\rho_j=0}
\|_{W^{1/2,s}
	(\partial\Omega_j\cap \partial\Omega,
	\rho_{\hat{j}},\lambda)}
\lesssim
\sum_j
\left\|
g
\right\|_{W^{0,s}(\Omega,\rho_{\hat{j}},
	\lambda)}.
\end{equation*}
\end{cor}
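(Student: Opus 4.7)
The plan is to take the weighted $W^{1/2,s}(\partial\Omega_k\cap\partial\Omega,\rho_{\hat{k}},\lambda)$-norm on both sides of \eqref{RNormTheta} for each $k$, estimate the four kinds of terms appearing on the right, and then sum over $k$. The leading term $R_k\circ\Theta_k^-g$ is controlled directly by Corollary \ref{RThetaEst}, yielding a bound by $\|g\|_{W^{0,s}(\Omega,\rho_{\hat{k}},\lambda)}$. The term $R_k\circ\Psi^{-2}g$ satisfies the same bound by repeating the proof of Corollary \ref{RThetaEst} with Theorem \ref{PsiInt} in place of Theorem \ref{ThetaInt}. Since the weights are bounded polynomials in $\rho$, the smoothing remainder $R_{bk}^{-\infty}$ is dominated via \eqref{RbInftyEst} and \eqref{normalInhomoInt}, together with the $L^2$-bound for $G(g)$ from \cite{JK95}, by $\|g\|_{L^2(\Omega)}\le\|g\|_{W^{0,s}(\Omega,\rho_{\hat{k}},\lambda)}$.

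The delicate terms are the boundary couplings $\lre^{jk}_{-1/2}(\partial_{\rho_j}u|_{\rho_j=0})$ for $j\ne k$. Theorem \ref{weightedLRE} with $\alpha=\beta=1/2$, $\gamma=0$ bounds these in $W^{1/2,s}$ only by $\|\partial_{\rho_j}u|_{\rho_j=0}\|_{W^{0,s}}$, which is half an order too weak to close the estimate. To supply the missing regularity I would iterate \eqref{RNormTheta} by substituting the expression for $\partial_{\rho_j}u|_{\rho_j=0}$ back into the right-hand side. The resulting composition $\lre^{jk}_{-1/2}\circ\lre^{kj}_{-1/2}$ equals $\lre^{kk}_{-1,1/2}$ by the composition rule of Section \ref{secNotation}, and Theorem \ref{weightedLRE} then shows that it maps $W^{-1/2,s}(\partial\Omega_k)\to W^{1/2,s}(\partial\Omega_k)$. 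Iterating this substitution $N$ times accumulates a composition of $N+1$ factors $\lre^{\cdot\cdot}_{-1/2}$, gaining $(N+1)/2$ Sobolev orders and reducing the required norm on the residual normal derivative to $W^{1/2-(N+1)/2,s}$. For $N$ large enough, the integral representation \eqref{normalInhomoInt} combined with a tangential smoothing and the boundedness of the weights then controls this residual by $\|g\|_{L^2(\Omega)}\le\|g\|_{W^{0,s}(\Omega,\rho_{\hat{k}},\lambda)}$, and summing the resulting inequalities over $k$ yields the corollary.

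The main obstacle is the combinatorial bookkeeping through the iteration: at each step one must verify that the constraints $\beta\ge\gamma$ and $-1/2\le\beta-\alpha\le 1/2$ of Theorem \ref{weightedLRE} remain satisfied, and that the weights $\rho_{\hat{j}}$ on different boundary pieces are compatibly transported by the $\lre^{jk}$ operators as the indices alternate. The restriction to $m=2$ in the Main Theorem keeps this tractable, since for each $k$ there is a unique complementary index and the iteration becomes a scalar recursion rather than a system.
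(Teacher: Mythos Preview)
Your overall structure matches the paper's: take weighted $W^{1/2,s}$-norms in \eqref{RNormTheta}, handle $R_k\circ\Theta_k^-g$ and $R_k\circ\Psi^{-2}g$ via Corollary \ref{RThetaEst} and its analogue, and control $R_{bk}^{-\infty}$ by \eqref{RbInftyEst}. The divergence is in how you treat the coupling terms $\lre^{jk}_{-1/2}(\partial_{\rho_j}u|_{\rho_j=0})$, and here there is both an unnecessary complication and a genuine gap.

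You assert that the bound $\|\lre^{jk}_{-1/2}(\partial_{\rho_j}u)\|_{W^{1/2,s}}\lesssim\|\partial_{\rho_j}u\|_{W^{0,s}}$ is ``half an order too weak to close.'' It is not. The paper simply sums this estimate over $j$ and then uses the interpolation inequality
\[
\|\partial_{\rho_k}u|_{\rho_k=0}\|_{W^{0,s}}\le \mathrm{s.c.}\,\|\partial_{\rho_k}u|_{\rho_k=0}\|_{W^{1/2,s}}+C\,\|\partial_{\rho_k}u|_{\rho_k=0}\|_{W^{-\infty}},
\]
absorbing the small-constant term into the left-hand side. The $W^{-\infty}$ remainder is already controlled by $\|g\|_{L^2}$. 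No iteration is needed. (The paper does precede this with a short qualitative bootstrap, using \eqref{RNormTheta} repeatedly to show that $\partial_{\rho}u|_{\partial\Omega}$ actually lies in the relevant weighted space so that the absorption is legitimate; you omit this step.)

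Your alternative---iterating \eqref{RNormTheta} so that the residual carries a long composition of $\lre_{-1/2}$ operators---does not work as you describe. You claim the residual after $N$ substitutions requires only $\partial_{\rho}u\in W^{1/2-(N+1)/2,s}$, but Theorem \ref{weightedLRE} has the standing hypothesis $-1/2\le\delta\le 1/2$ on the input. Tracing the chain, each factor $\lre_{-1/2}$ can accept input at $W^{\delta,s}$ only for $\delta\ge -1/2$; once the intermediate regularity reaches $1/2$ it saturates (since the next input must again satisfy $\delta\le 1/2$), and further composition gains nothing on the input side. Concretely, $\lre_{-1/2}\circ\lre_{-1/2}\circ\lre_{-1/2}$ still needs its argument in $W^{-1/2,s}$, not $W^{-1,s}$. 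So the iteration does not drive the required regularity of $\partial_{\rho}u$ to $-\infty$, and you would still need a separate argument to supply $\partial_{\rho}u\in W^{-1/2,s}$---which is essentially the same problem you started with, one half-step lower. The absorption trick avoids all of this.
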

\begin{proof}
First, we note that with the 
 assumption that $g\in L^2
 (\Omega)$, we have
$\partial_{\rho}u|_{\partial\Omega}
 \in W^{-1}(\partial\Omega)$ by
\eqref{normalInhomoInt}
 and interior estimates for
for the solution to the inhomogeneous
 Dirichlet problem \cite{JK95}.
Using a bootstrapping
 argument, we can use \eqref{RNormTheta}
 to show
$\partial_{\rho} u|_{\partial\Omega}
 \in W^{1/2}(\partial\Omega)$.
Applications of \eqref{RNormTheta} can then
 be used to get weighted estimates.  First,
to show $\rho_{\hat{j}} \partial_{\rho_j} 
 u|_{\partial\Omega_j} \in W^{1}(
 \partial\Omega_j \cap \partial\Omega)$,
and with this, that
 $\rho_{\hat{j}} \partial_{\rho_j} 
 u|_{\partial\Omega_j} \in W^{3/2}(
 \partial\Omega_j \cap \partial\Omega)$.
Then multiplication with 
 $\rho_{\hat{j}}^2$ can be used to show
first $\rho_{\hat{j}}^2 \partial_{\rho_j} 
u|_{\partial\Omega_j} \in W^{2}(
\partial\Omega_j \cap \partial\Omega)$, and then
 $\rho_{\hat{j}}^2 \partial_{\rho_j} 
 u|_{\partial\Omega_j} \in W^{5/2}(
 \partial\Omega_j \cap \partial\Omega)$, and so forth.
	
For the estimates, we take weighted estimates 
of 
	\eqref{RNormTheta}, applying the
estimates for 
$R_j\circ \Theta_j^-g $ and 
$R_j\circ \Psi^{-2} g$ above:
\begin{align*}
 \|  
  \partial_{\rho_j} u|_{\rho_j=0}
 &\|_{W^{1/2,s}
 	(\partial\Omega_j\cap \partial\Omega,
 	\rho_{\hat{j}},\lambda)}\\
&\lesssim 
 \left\|
  g
\right\|_{W^{0,s}(\Omega,\rho_{\hat{j}},
	\lambda)}
 +
\sum_k  \|  
\partial_{\rho_k} u|_{\rho_k=0}
\|_{W^{0,s}
	(\partial\Omega_k\cap \partial\Omega,
	\rho_{\hat{k}},\lambda)}.
\end{align*}
 We used the estimates
  in \eqref{RbInftyEst} to estimate
the smooth terms, $R_{bj}^{-\infty}$ from
 \eqref{RNormTheta}.

Summing over all boundaries $\partial\Omega_j$
 and
 using
\begin{multline*}
  \|  
 \partial_{\rho_k} u|_{\rho_k=0}
 \|_{W^{0,s}
 	(\partial\Omega_k\cap \partial\Omega,
 	\rho_{\hat{k}},\lambda)}
  \lesssim\\ s.c. 
  \|  
 \partial_{\rho_k} v|_{\rho_k=0}
 \|_{W^{1/2,s}
 	(\partial\Omega_k\cap \partial\Omega,
 	\rho_{\hat{k}},\lambda)}+
 \|  
\partial_{\rho_k} u|_{\rho_k=0}
\|_{W^{-\infty}(\partial\Omega)}
\end{multline*}
  to bring the last sum on the right
to the left hand side yields
\begin{equation*}
\sum_j
  \|  
 \partial_{\rho_j} v|_{\rho_j=0}
 \|_{W^{1/2,s}
 	(\partial\Omega_j\cap \partial\Omega,
 	\rho_{\hat{j}},\lambda)}
 \lesssim
\sum_j
 \left\|
 g
 \right\|_{W^{0,s}(\Omega,\rho_{\hat{j}},\lambda)}.
\end{equation*}
\end{proof}

As a corollary we can prove the 
\begin{thrm}
\label{weightedGreen}
For 
any $k$,
\begin{equation*}
\| G (g) \|_{W^{2,s}(\Omega,\rho_{\hat{k}},
	 \lambda)} 
\lesssim
\|g\|_{W^{0,s}(\Omega,\rho_{\hat{k}},
	\lambda)},
\end{equation*}
for $s\ge 0$.
\end{thrm}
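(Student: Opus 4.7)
The plan is to mirror the proof strategy used for Theorem \ref{thrmWeightedPoisson}: starting from the representation \eqref{vExp},
\begin{equation*}
u = \sum_j \Gamma^{\sharp}\bigl(\partial_{\rho_j} u\big|_{\rho_j=0}\bigr) + \Psi^{-3}\bigl(\partial_{\rho} u\big|_{\partial\Omega}\bigr) + \Gamma^{\sharp} g + \Psi^{-3} g + R^{-\infty},
\end{equation*}
I estimate each of the five contributions in $W^{2,s}(\Omega,\rho_{\hat{k}},\lambda)$ and bound them by $\|g\|_{W^{0,s}(\Omega,\rho_{\hat{k}},\lambda)}$. The two interior terms $\Gamma^{\sharp} g$ and $\Psi^{-3}g$ are immediate from Theorem \ref{PsiInt}, applied with $\alpha=2$ and $\alpha=3$ respectively and with the single-index weight $\rho_{\hat{k}}$: the theorem yields precisely a gain of $\alpha$ derivatives in the weighted Sobolev scale, which is all we need.

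For the two boundary-data contributions, I first note that $\Gamma^{\sharp}$ and the order $-3$ parametrix correction in \eqref{vExp} are decomposable, since they arise from the symbol expansion of the parametrix of the elliptic operator $\Gamma$. Applying Theorem \ref{thrmWghtdDecomp} with $\alpha=2$, $\beta=5/2$ (and similarly with $\alpha=3$, $\beta=5/2$) gives
\begin{equation*}
\bigl\|\Gamma^{\sharp}(\partial_{\rho_j} u|_{\rho_j=0})\bigr\|_{W^{2,s}(\Omega,\rho,\lambda)} \lesssim \bigl\|\partial_{\rho_j} u|_{\rho_j=0}\bigr\|_{W^{1/2,s}(\partial\Omega\cap\partial\Omega_j,\rho_{\hat{j}},\lambda)},
\end{equation*}
and Corollary \ref{corGreenNormDerEst} in turn bounds the right-hand side by $\sum_j \|g\|_{W^{0,s}(\Omega,\rho_{\hat{j}},\lambda)}$. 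The smoothing remainder $R^{-\infty}$ is absorbed using \eqref{RbInftyEst}, which controls it by $\|g\|_{L^2(\Omega)}$, itself dominated by the required right-hand side.

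The main obstacle, and essentially the only non-routine step, is to reconcile the full interior weight $\rho$ supplied by Theorem \ref{thrmWghtdDecomp} with the single-index weight $\rho_{\hat{k}}$ demanded by the conclusion. I plan to handle this by splitting the sum in $\sum_j \Gamma^{\sharp}(\partial_{\rho_j} u|_{\partial\Omega_j})$ according to whether $j=k$ or $j\neq k$. For $j=k$, the weight $\rho_{\hat{k}}=\rho_{\hat{j}}$ is tangential to $\partial\Omega_j$ and can be commuted past $\Gamma^{\sharp}$ modulo lower-order decomposable errors, so Theorem \ref{thrmWghtdDecomp} applies verbatim with the weight $\rho_{\hat{k}}$ in place of $\rho$. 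For $j\neq k$ (so that $\rho_{\hat{k}}$ contains $\rho_j$ as a factor in the two-domain setting of the Main Theorem), the identity $\rho_j\,\delta(\rho_j)=0$ in the sense of distributions forces the leading term of $\rho_{\hat{k}}^{(s-r)\lambda}\Gamma^{\sharp}(h_{bj}\times\delta_j)$ to vanish, so only the commutator $[\rho_{\hat{k}}^{(s-r)\lambda},\Gamma^{\sharp}]$ survives; this commutator is of strictly lower order and is handled by the same decomposable machinery. Assembling the four estimates, together with the usual small-constant absorption of lower-order derivative terms, produces the stated bound.
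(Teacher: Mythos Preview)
Your approach mirrors the paper's proof: write $u$ via \eqref{vExp}, handle the interior piece $\Psi^{-2}g$ by the commutation argument of Theorem \ref{ThetaInt} (equivalently your Theorem \ref{PsiInt}), the boundary pieces $\Psi^{-2}(\partial_{\rho_j}u|_{\rho_j=0})$ by Theorem \ref{thrmWghtdDecomp} together with Corollary \ref{corGreenNormDerEst}, and $R^{-\infty}$ by \eqref{RbInftyEst}. The paper's own argument is considerably terser and does not spell out the weight-reconciliation step you isolate.

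Two technical cautions on your handling of that step. First, for $j=k$ you assert that commuting the tangential weight $\rho_{\hat{k}}$ past $\Gamma^{\sharp}$ lets Theorem \ref{thrmWghtdDecomp} apply ``verbatim with the weight $\rho_{\hat{k}}$ in place of $\rho$''. This is not automatic: the range $\beta\le\alpha+1/2$ in that theorem caps the unweighted interior Sobolev level at $\alpha-1/2=3/2$ (or $2$ for $\beta=5/2$), and it is precisely the \emph{normal} factor $\rho_j$ inside the full weight $\rho$, via \eqref{lowerOrder}, that buys the extra $s-r$ derivatives in the weighted scale. With only the tangential factors $\rho_{\hat{k}}=\rho_{\hat{j}}$ you need instead an extension-by-zero argument to the smooth domain $\Omega_k$, exactly as in the proof of Corollary \ref{RThetaEst}. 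Second, invoking Corollary \ref{corGreenNormDerEst} as you do produces $\sum_j\|g\|_{W^{0,s}(\Omega,\rho_{\hat{j}},\lambda)}$ on the right, not the single-index $\|g\|_{W^{0,s}(\Omega,\rho_{\hat{k}},\lambda)}$ asserted in the theorem; the paper's brief proof does not address this either, and the discrepancy is immaterial for the Main Theorem since one sums over $k$ there anyway.
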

\begin{proof}
As above, let $u=G(g)$.
 Estimates come by taking
$W^{2,s}$ norms of the terms on the right-hand side of
 \eqref{vExp}:
\begin{equation*}
u= 
\sum_j \Psi^{-2}
\left(\partial_{\rho_j} u 
\big|_{\rho_j =0} \right)
+  \Psi^{-2} g + R^{-\infty}.
\end{equation*}
 The first $\Psi^{-2}$ operator is decomposable,
arising as the inverse to the Laplacian.
 Therefore the estimates of 
Theorem \ref{weightedPsi} can be applied to the term
$ \Psi^{-2}  \left(\partial_{\rho_j} u 
\big|_{\rho_j =0} \right) $.
Estimates for $\Psi^{-2}g$ 
 follow as in Theorem \ref{ThetaInt},
and those for
 $R^{-\infty}$ follow as in \eqref{RbInftyEst}.
\end{proof}
Thus in the case of intersections of smooth
 domains we obtain a (weighted) gain
of two derivatives.  This
 is also the case with
  Lipschitz domains 
however with Lipschitz domains the level of
 Sobolev norms for the solution
 is restricted between 
$W^{1/2}$ and $W^{3/2}$
  (Theorem .5 \cite{JK95}).

\section{Boundary equations}
\label{secBndryEqns}

	We now return to the conditions
in 
\eqref{NeumannA}:
for $j\notin K$, 
\begin{equation*}
\Lb_j u_K + (-1)^{|K|}c_{jK}^K u_K=0
\end{equation*}
on  $\rho_{j}=0$ 
for 
$1\le j\le m$.

We write
	$ u= G(2f) + P(u_b)$.  	
From Section \ref{secDNO}, we have 
\begin{equation*}
 \partial_{\rho_j}  
 P_K(u_b)|_{\rho_j=0} =
 |D_{bj}| u_{K,bj}
 +E u_{b}
 + \Lambda_{bj}^0 u_{K,bj},
\end{equation*}
$E$ is given by the
 $|D_{bj}| \circ 
  \sum_{k} \lre^{kj}_{-1/2} u_{K,bk}
  + C_j u_{b}$ terms
in Theorem \ref{thrmDNO} 
and $\Lambda_{bj}^0$ is also as in 
Theorem \ref{thrmDNO}.

On the boundary $\partial\Omega_j$ we use the notation
 $X_{k,bj}$ to denote the complex tangential (to
  $\Omega_j$) vector fields:
for 
$1\le j\le m$,
and $k\neq j$, we set
\begin{equation*}
X_{k,bj} =  
\begin{cases}
L_{k} \Big|_{\rho_j=0} & \mbox{ if } 1\le k
\le m\\
V_{k} \Big|_{\rho_j=0} &  \mbox{ if } 
m+1\le k \le n.
\end{cases}
\end{equation*}

 $\Lambda^0_{bj}$ is a matrix of pseudodifferential operators, 
and
 we write $\sigma(\Lambda^0_{bj})_{K,K}$ to refer to the
symbol in the $(K,K)^{th}$ entry of the 
 matrix symbol $\sigma(\Lambda^0_{bj})$.

 We will use 
$ | \cdot |_{\lrl_j}$ to 
 denote the Levi-norm length of 
  a vector field: the Levi norm is given
 by
 Levi metric, which 
is
defined by
\begin{equation*}
ds^2_j = 
\sum \frac{\partial^2 \rho_j}{\partial z_k \zb_l} dz_k d\zb_l,
\end{equation*}
 and the norm of a vector 
 field,
\begin{equation*}
 Z = \sum \gamma_j 
 \frac{\partial}{\partial z_j}
\end{equation*}
 with respect to this metric will be written
  as
\begin{equation*}
 |Z|_{\lrl_j} := \sum \frac{\partial^2 \rho_j}{\partial z_k \zb_l}
 \gamma_k  \overline{\gamma}_l.
\end{equation*}

 We can use Proposition 5.1 of \cite{Eh18_dno},
which relates the symbols of the $S_j$, $A$, and 
 $\tau_j$ operators of \eqref{localSAT} with the
terms $c^{K}_{jK}$ as well as the Levi-norms 
 (on $\partial\Omega_j$) of tangential vectors,
to find the symbol of $\Lambda_{bj}^0$.  
 On the diagonal of $\Lambda_{bj}^0$
 we have
\begin{align*}
\sigma(\Lambda^0_{bj})_{K,K}=& -(-1)^{|K|} \sqrt{2} c^K_{jK}
+  
\sum_{k\notin K} |X_{k,bj}|_{\lrl_j}^2
-  \sum_{k\in K} |X_{k,bj}|_{\lrl_j}^2\\
&
+O(x,\rho_{\hat{j}}) +O\left(
\frac{\sqrt{\eta_{\hat{j}}^2+
 \xi_{\hat{j}}^2  }}
 {\sqrt{\eta_{\hat{j}}^2+\Xi_{bj}^2}}
\right)  
\end{align*}
in a microlocal neighborhood 
 defined by the set product of a neighborhood
of the origin with the support of the
 $\psi^{-}_{N, bj}$ symbol 
defined in Section \ref{secDNO}. 
 There will also be some
 entries off the diagonal for $\sigma(\Lambda_{bj}^0)$,
 which arise from the 
contribution of the first order operators,
$
- \varepsilon^{kK_{\hat{j}} }_{K_{\hat{j}}\cup \{k\}}
  \varepsilon^{jK_{\hat{j}} }_{K}
[ \Wb_j, W_k]
u_{K_{\hat{j}}k}\omegab_K, $
 ($j\in K$)
 in 
 Proposition \ref{squareOp} $ii)$ to the  
 terms off the
diagonal in the symbol
 $\sigma(A) = \alpha(\rho,x,\xi)$.  We note
 for now that
the commutators, $[ \Wb_j, W_k]$, are tangential with respect
 to $\partial\Omega_j$.

Thus, for $1\le j\le m$, we can write
\begin{align}
 \nonumber
 \frac{1}{\sqrt{2}}\left(\Lambda_{bj}^0\right)_{K,K}& 
  + (-1)^{|K|}c_{jK}^K =\\
  \label{forZeroDNO} 
 \frac{1}{\sqrt{2}}
  \sum_{k\notin K} &|X_{k,bj}|_{\lrl}^2
 -  \frac{1}{\sqrt{2}} \sum_{k\in K} |X_{k,bj}|_{\lrl}^2
 +O(x,\rho_{\hat{j}}) +O\left(
\frac{\sqrt{\eta_{\hat{j}}^2+
		\xi_{\hat{j}}^2  }}
 {\sqrt{\eta_{\hat{j}}^2+\Xi_{bj}^2}}
 \right)
\end{align}
	on $\rho_j = 0$ 
in a microlocal neighborhood in which
 $\xi_{j} \ll -
 \sqrt{\eta_{\hat{j}}^2+\xi_{\hat{j}}^2}$.

From this point forward, we 
 work with $(0,1)$-forms in 
 $\mathbb{C}^2$.  We set 
  $n=m=2$, and look at resulting simplifications in 
the boundary equations.  We first deal with the non-diagonal
 tangential operators contained in 
$\Lambda_{bj}^0$.  Without loss of generality 
 we work on the particular boundary
 $\partial\Omega_1$.  We write
\begin{equation*}
 u = u_1 \omegab_1 + u_2\omegab_2.
\end{equation*}
  On 
 $\partial\Omega_1$, 
 \eqref{dirBndry1} leads to
$u_1|_{\rho_1=0} =0$.   We will also use the 
 notation
$u_{bj} = u_{1,bj} \omegab_1 + u_{2,bj} \omegab_2$.
From $u = P(u_b) + G(2f)$ we write
\begin{equation*}
 u_j = P_j(u_b) + G_j(2f)
\end{equation*}
for $j=1,2$.

Turning to \eqref{NeumannA} on
 $\partial\Omega_1$, we examine in more
detail the normal derivative,
\begin{align*}
 R_1 \circ \Lb_1 u_2 =& R_1 \circ \Lb_1 (P_2(u_b) +G_2(2f) )\\
  =& \frac{1}{\sqrt{2}} \left( N_1^- u_b \right)_2
  -iT_1 u_{2,b1} 
   +\sqrt{2} \partial_{\rho_1} G_2 f,
\end{align*}
where $N_1^-$ is given as in 
 Theorem \ref{thrmDNO},
  and  we use a subscript 
around the $N_1^-u_b$ term to denote which
  component (of the vector result) we are taking.  Thus
\begin{equation*}
 \left( N_1^- u_b \right)_2
  := N_1^- u_b \rfloor \omegab_2.
\end{equation*}
Taking the $\omegab_2$ components, 
 we obtain
\begin{multline*}
R_1\circ \left(N_1^-u_{b}\right)_2
=\\ |D_{b1}| u_{2,b1} + 
  2
 |D_{b1}|\circ R_1 \circ 
 \Theta_2^+ \circ R_2 \circ 
 \Theta_1^+ u_{2,b1}
+ \left(\Lambda_{b1}^0u_{b1}\right)_2
 + R_1\circ \left(C_1 u_b\right)_2,
\end{multline*}
and $C_1 u_b$ can be estimated as in the
 theorem
(assuming a microlocal neighborhood 
 in which $|\xi_1| \gg \sqrt{\eta_2^2+\xi_2^2}$).
For convenience, we group the
 second term on the right with the
last error term, and write
\begin{equation}
\label{defnE}
Eu_{b}
= 2
|D_{b1}|\circ R_1 \circ 
\Theta_2^+ \circ R_2 \circ 
\Theta_1^+ u_{b}
+ R_1\circ C_1u_b.
\end{equation}
so that
\begin{equation*}
 \left(Eu_{b}\right)_2
  = 2
  |D_{b1}|\circ R_1 \circ 
  \Theta_2^+ \circ R_2 \circ 
  \Theta_1^+ u_{2,b1}
   + R_1\circ \left( C_1u_b\right)_2.
\end{equation*}

Furthermore, as mentioned above, the terms off the 
 diagonal of the operator 
  $\Lambda_{b1}^0$ are tangential with respect to
$\partial\Omega_1$, and so 
\begin{align*}
 \Lambda_{b1}^0 u_{b1} 
   =&  \left[\left(
     \Lambda_{b1}^0 
   \right)_{1,1} u_{1,b1} 
   +
   \left(
   \Lambda_{b1}^0 
   \right)_{1,2} u_{2,b1}
    \right] \omegab_1\\
 &+\left[\left(
 \Lambda_{b1}^0 
 \right)_{2,1} u_{1,b1} 
 +
 \left(
 \Lambda_{b1}^0 
 \right)_{2,2} u_{2,b1}
 \right] \omegab_2 \\
=&      \left[
\left(
\Lambda_{b1}^0 
\right)_{1,2} u_{2,b1}
\right] \omegab_1
 +\left[
\left(
\Lambda_{b1}^0 
\right)_{2,2} u_{2,b1}
\right] \omegab_2.
\end{align*} 
Hence,
\begin{equation*}
\left(  \Lambda_{b1}^0 u_{b1} \right)_2 
  =  \left(\Lambda_{b1}^0 
  \right)_{2,2} u_{2,b1}.
\end{equation*}

We let $\zeta(x,\rho_2)$ have compact support in a neighborhood of 0
on $\partial\Omega_1$
 which provides a coordinate as in Section \ref{secSetup},
with $\zeta\equiv 1 $ near 0. 
Dividing Fourier space into regions as
 described in Section \ref{secNotation}, with
symbols $\psi^+_k$ with support in
$\xi_{1}>k \sqrt{\eta_2^2 + \xi_2^2}$,
 $\psi^-_k$ with support in
 $\xi_{1}<-k \sqrt{\eta_2^2 + \xi_2^2}$,
and $\psi^0_k$ defined by 
 $\psi^+_k + \psi^0_k+ \psi^-_k =1$, we write
\begin{equation*}
 u_b = u_b^+ + u_b^0 + u_b^-,
\end{equation*}
in a small neighborhood of
 $0\in \partial\Omega_1$,
where $u_b^+ := \Psi_k^+ u_b$,
 $\Psi^+_k$ being the operator with symbol 
\begin{equation}
\label{symMic}
 \sigma(\Psi^+_k) = \zeta(x,\rho_2)\psi^+_k(\xi,\eta_2),
\end{equation}
 and with
$u^0_b$ and $u^-_b$ defined similarly, with the
 same cutoff, $\zeta(x,\rho_2)$, in base space.

 We apply the operators
$\Psi^+_k$, respectively $\Psi^0_k$ to
 both sides of the boundary condition 
(on $\partial\Omega_1$)
 \begin{equation*}
 \Lb_1 u_2-c_{12}^2 u_2=0
 \end{equation*}
to write
\begin{multline}
 \label{bndryPlus}
 \left(\frac{1}{\sqrt{2}}|D_{b1}| -i T_1^0
\right) u_{2,b1}^{+} + \Psi^0_{b1}u_b+
E^{+}_2 u_b
= - \sqrt{2} R_1\circ \Psi^+_k\circ \partial_{\rho_1} \circ  G_2f,
\end{multline}
 where $T_1^0=R_1\circ T_1$,
 and $E^+_2 u_b:=\Psi_k^+\circ (Eu_b)_2$, $E$ being defined above in \eqref{defnE}.
  
 Similar notation is used to write
  $u_{2,b1}^0$.  
We use the notation 
 $u_{2,b1}^{\ast}$ to denote either 
  $u_{2,b1}^+$ or $u_{2,b1}^0$.
The first order operator, 
$\frac{1}{\sqrt{2}}|D_{b1}| -iT_1^0$ 
is elliptic in the regions of 
support $\psi^0_k$ and $\psi^+_k$, and so
 leads to (weighted) estimates with
 a gain of a derivative:
 \begin{equation}
\label{goodEst}
 \begin{aligned}
 \| u_{2,b1}^{\ast} &\|_{W^{1,s}(\partial\Omega_1,\rho_{2},\lambda)}\\
& \lesssim 
  \| u_{2,b1} \|_{W^{0,s}(\partial\Omega_1,\rho_{2},
  	\lambda)}
 +  \| u_{1,b2} \|_{W^{0,s}(\partial\Omega_2,\rho_{1},
 	\lambda)}
\\
 &\qquad+ \| R_1 \circ \partial_{\rho_1}\circ G_2f 
 \|_{W^{0,s}(\partial\Omega_1,\rho_{2},\lambda)}
 +\| E u_b
 \|_{W^{0,s}(\partial\Omega_1,\rho_{2},\lambda)}  \\
& \lesssim 
 \| u_{2,b1}
 \|_{W^{0,s}(\partial\Omega_1,\rho_{2},\lambda)}
 +  \| u_{1,b2} \|_{W^{0,s}(\partial\Omega_2,\rho_{1},\lambda)}
 + \| f 
 \|_{W^{0,s}(\Omega,\rho_{2},\lambda)},
\end{aligned}
 \end{equation}
 where we use Corollary 
 \ref{corGreenNormDerEst} in the
 last step.  In fact, the same reasoning shows
for operators 
 $\Psi^+_{k_1}$ and
 $\Psi^0_{k_2}$, defined by
symbols   $\psi^+_{k_1}$ and
$\psi^0_{k_2}$, respectively,
 with $k_1< k < k_2$, and with
 the properties
$\psi^+_{k} \equiv 1 $
 on the support of  $\psi^+_{k_2}$,
and similarly,
$\psi^0_{k} \equiv 1 $
on the support of  $\psi^0_{k_1}$,
  we have
 \begin{equation}
  \label{EstPlusZero}
  \begin{aligned} 
\|\Psi^+_{k_1} u_{2,b1} \|_{W^{1,s}(\partial\Omega_1,\rho_{2},\lambda)}
\lesssim &
\| u_{2,b1}
\|_{W^{0,s}(\partial\Omega_1,\rho_{2})}
+  \| u_{1,b2} \|_{W^{0,s}(\partial\Omega_2,\rho_{1},\lambda)}\\
& + \| f 
\|_{W^{0,s}(\Omega,\rho_{2})}\\
\|\Psi^0_{k_2} u_{2,b1} \|_{W^{1,s}(\partial\Omega_1,\rho_{2},\lambda)}
\lesssim &
\| u_{2,b1}
\|_{W^{0,s}(\partial\Omega_1,\rho_{2},\lambda)}
+  \| u_{1,b2} \|_{W^{0,s}(\partial\Omega_2,\rho_{1},\lambda)}\\
& + \| f 
\|_{W^{0,s}(\Omega,\rho_{2},\lambda)}.
   \end{aligned}
\end{equation}  

We now write $u_{2,b1} = 
 u_{2,b1}^+ + u_{2,b1}^0 + u_{2,b1}^-$
 in
the boundary condition 
\begin{equation*}
\Lb_1 u_2-c_{12}^2 u_2=0
\end{equation*}
and use
\begin{align*}
 R_1\circ (\Lb_1 -c_{12}^2) u_{2}^- =& R_1\circ \Psi^-_k \circ 
   (\Lb_1 -c_{12}^2) u_{2} + 
  R_1\circ\left(  \left[ (\Lb_1 -c_{12}^2), \Psi^-_k\right]
    u_2\right)\\
    =& \left[ (\Lb_1 -c_{12}^2), \Psi^-_k\right]
    u_{2,b1}.
\end{align*}
If we let $\varphi(x,\rho_2)\in 
 C{\infty}_0(\partial\Omega_1)$ be such that
the support of $\varphi$ is contained in the
 region where $\zeta\equiv 1$, then
the support of the symbol,
$
 \varphi(x,\rho_2)\sigma \left(
 	\left[ (\Lb_1 -c_{12}^2), \Psi^-_k\right]
 \right) 
$, is contained in the region where
   $\sigma(\Psi^0_{k_2}) \equiv 1$
for some $ k_2>k$.
Let us fix a notation for such an operator.
 Let $\Psi^r_{b1,\psi^0}$ denote an operator
of order $r$ on
 $\partial\Omega_1$ whose symbol has support
 in the region where
 $\sigma(\Psi^0_{k_2}) \equiv 1$.

For such an operator, we can
 use \eqref{EstPlusZero} to conclude
\begin{align*}
\nonumber
\big\|
\Psi_{b1,\psi^0}^0 \big\|_{W^{1,s}(\partial\Omega_1, \rho_2,\lambda)}
\lesssim& 
\| u_{2,b1}
\|_{W^{0,s}(\partial\Omega_1,\rho_{2},\lambda)}
+  \| u_{1,b2} \|_{W^{0,s}(\partial\Omega_2,\rho_{1},\lambda)}\\
\nonumber
& + \| f 
\|_{W^{0,s}(\Omega,\rho_{2},\lambda)}.
\end{align*}

We write in a similar manner
 to \eqref{bndryPlus} 
the boundary condition in the
 microlocal region determined by $\Psi^-_k$:
\begin{multline}
 \label{bndryCondn}
 \left(\frac{1}{\sqrt{2}}|D_{b1}| -i T_1^0
 \right) u_{2,b1}^- +
 \frac{1}{\sqrt{2}}\left(
 \sum_{k\notin K} |X_{k,b1}|_{\lrl_1}^2
 -  \sum_{k\in K} |X_{k,b1}|_{\lrl_1}^2
 \right) u_{2,b1}^- \\
+\Psi^0_{b1,\psi^0}
u_{2,b1}
 +\Psi^0_{\varepsilon,b1} u_b
+ E^{-}_2 u_b
= - \sqrt{2} R_1\circ 
  \Psi^-_k\circ \partial_{\rho_1} \circ  G_2f,
\end{multline}
(in the support of a cutoff, $\varphi$, as outlined above),
 where we use the notation,
$\Psi^0_{\varepsilon,b1}$ as outlined in 
\eqref{smallOps},
 the $\varepsilon$ signifying
the property
\begin{equation*}
\| \Psi_{\varepsilon,b1}^0 \phi\|_{W^s(\mathbb{R}^3)}
\le s.c. \| \phi \|_{W^{s}(\mathbb{R}^3)}
\end{equation*}
where the constant of inequality,
written as "s.c.", can be made arbitrarily
small by choosing an appropriately large 
constant, $k$, for the functions,
$\psi^+_k$, $\psi^0_k$, and $\psi^-_k$, used 
to divide Fourier space above. 
Recall that the sums of norms of vector fields on the
  left-hand side
 of \eqref{bndryCondn} come from the zero order term of the DNO
as in \eqref{forZeroDNO}.
 For these norms, we use the notation
  $\lrl_1$ to denote the Levi form is used
with respect to the defining function,
 $\rho_1$, on $\partial\Omega_1$.

From the estimates in 
 \eqref{EstPlusZero}, it suffices to consider 
\eqref{bndryCondn}, and get estimates for
 $u_b^-$.

On 
$\partial\Omega_1$, the 
vector field, $X_{2,b1}$ is
given by $L_{2,b1}$, while
$X_{1,b1}$ does not exist.  Thus 
\begin{equation*}
\sum_{k\notin K} |X_{k,bj}|_{\lrl_1}^2
-  \sum_{k\in K} |X_{k,bj}|_{\lrl_1}^2
=-|L_{2,b1}|_{\lrl_1}^2.
\end{equation*} 
The strict pseudoconvexity condition
gives 
\begin{equation*}
|L_{2,b1}|_{\lrl_1}^2= \beta > 0
\end{equation*}
on $\partial\Omega_1$. 
The boundary condition,
 \eqref{bndryCondn}, on
$\partial\Omega_1$ thus becomes
\begin{multline}
\label{case1Bndry1}
 \left(\frac{1}{\sqrt{2}}|D_{b1}| -i T_1^0
\right) u_{2,b1}^- -
\frac{\beta}{\sqrt{2}} u_{2,b1}^- \\
+\Psi^0_{b1,\psi^0}
u_{2,b1} +\Psi^0_{\varepsilon,b1} u_{2,b1}^-
  +E^-_2 u_{b}
= - \sqrt{2} R_1\circ \Psi^-_k \circ 
 \partial_{\rho_1} \circ  G_2f.
\end{multline}

The boundary condition on $\partial\Omega_2$ is of course
 symmetric, so it suffices to obtain
estimates for $u_{2}$ on $\partial\Omega_1$.

We now apply $\frac{1}{\sqrt{2}}|D_{b1}| +i T_1^0$
 to both sides of \eqref{case1Bndry1} above.
We use 
\begin{equation}
 \label{conjMult}
 \bigg(
\frac{1}{\sqrt{2}}|D_{b1}| +i T_1^0
 \bigg)
 \bigg(
\frac{1}{\sqrt{2}}|D_{b1}| -i T_1^0
 \bigg)
	=
\left(
\frac{1}{2} D_{b1}^2+(T_1^0)^2
\right)
+ \Psi^1_{b1},
\end{equation}
where $\Psi^1_{b1}$ is given
 by $\frac{i}{\sqrt{2}}[T_1^0, |D_{b1}|]$.
The symbol of this first order term has the
 property
\begin{equation*}
 \sigma_1\left(
  [T_1^0, |D_{b1}|]
 \right) = O\left(
   \sqrt{\eta_2^2+ \xi_{2}^2}
 \right) + O(x,\rho_2)
\end{equation*}
from \eqref{dxXi}
 (see also Proposition 3.4 in 
  \cite{CNS92} or 
 Section 6 of \cite{Eh18_dno}).
We thus write the first order operator in
 \eqref{conjMult} above as
$\Psi_{\varepsilon,b1}^1$.
 
We also note that 
 the operator,
$
	\frac{1}{2} D_{b1}^2+(T_1^0)^2
$, is given by
\begin{align*}
\frac{1}{2} D_{b1}^2+(T_1^0)^2
 =&   
 L_{2,b1} \Lb_{2,b1}
  +i \sqrt{2} |L_{2,b1}|_{\lrl_1}^2
   T_1^0
  +\Psi^1_{\varepsilon,b1}\\
   =&   
  L_{2,b1} \Lb_{2,b1}
  +i\sqrt{2} \beta T_1^0
  +\Psi^1_{\varepsilon,b1}
\end{align*}
(see the discussion in \cite{Eh18_dno}).

After applying
 $ \frac{1}{\sqrt{2}}|D_{b1}| +i T_1^0$ to both sides of
\eqref{case1Bndry1}, we thus have
\begin{align}
\nonumber
  L_{2,b1} \Lb_{2,b1} &
   u_{2,b1}^-
 +i\sqrt{2} \beta T_1^0 u_{2,b1}^-
- \frac{\beta}{\sqrt{2}} 
  \left(
     \frac{1}{\sqrt{2}}|D_{b1}| +i T_1^0
  \right)u_{2,b1}^- \\
\nonumber
  =& -\sqrt{2} R_1 \circ \left(
\frac{1}{\sqrt{2}}|D_{b1}| +i T_1^0
  \right)\circ \Psi^-_k \circ \partial_{\rho_1} G_2 f\\
\label{bndryEqnFormNull}
& 
+\Psi^1_{b1,\psi^0}
u_{2,b1}
+\Psi^1_{\varepsilon,b1} u_{2,b1}^-
+\Psi^{0}_{b1} u_{2,b1}
+\Psi^1_{b1} E^-_2 u_b  
  .
\end{align}
Expanding the (symbols of the) 
 last two terms on the left hand
 side of \eqref{bndryEqnFormNull} for
large (negative) $\xi_1$, we see the terms cancel,
 modulo operators of small operator
  norm:
\begin{align}
\nonumber
L_{2,b1} \Lb_{2,b1} 
u_{2,b1}^-
=& -\sqrt{2} R_1 \circ \left(
\frac{1}{\sqrt{2}}|D_{b1}| +i T_1^0
\right)\circ \Psi^-_k \circ \partial_{\rho_1} G_2 f
\\
\label{bndryEqnForm}
&
+\Psi^1_{b1,\psi^0}
u_{2,b1}
 +\Psi^1_{\varepsilon,b1} u_{2,b1}^-
+\Psi^{0}_{b1} u_{2,b1}
+\Psi^1_{b1} E^-_2 u_b  
.
\end{align}

\section{A priori weighted $L^2$ boundary estimates}
\label{secWeightedEst}

To show estimates for the boundary solution,
 $u_b$,
we start with the property
\begin{equation}
\label{baseSpace}
u_{k,bj}\in L^2
\left(
\partial\Omega_j \cap \partial\Omega
\right)
\end{equation} 
for $k=1,2$.

In the weighted estimates we
 start with the base case, $s=0$.  
 From \cite{MS98} (Theorem 3.1), we have
the solution, $u$, to the \dbar-Neumann problem
  is in $W^{1/2}(\Omega)$ as are
$\mdbar u$ and $\mdbar^{\ast}u$.  We 
now follow
 the proof in Lemma 5.2.3 of \cite{CS} to
show the boundary estimates in
 \eqref{baseSpace}.

We write
\begin{equation*}
\| u_b \|_{L^2(\partial\Omega)} 
:= \left\| u_{b1}
\right\|_{L^2(\partial\Omega_1\cap \partial\Omega)}
+\left\| u_{b2}
\right\|_{L^2(\partial\Omega_2\cap \partial\Omega)}.
\end{equation*}
We will also use the short-hand
\begin{equation*}
 \| \rho^k u_b\|_{W^r(\partial\Omega)}
  :=
\left\| \rho_2^k u_{b1}
\right\|_{W^r(\partial\Omega_1\cap \partial\Omega)}
+\left\|\rho_1^k u_{b2}
\right\|_{W^r(\partial\Omega_2\cap \partial\Omega)}.
\end{equation*}

We let $\Lambda_{bj}^{1/2} \in 
 \Psi^{1/2}(\partial\Omega_j\cap \partial\Omega)$
for $j=1,2$
denote the operator with symbol
\begin{equation*}
 \sigma(\Lambda_{bj}^{1/2})
  = \left(1+
   \xi^2 + \eta_k^2
  \right)^{1/2},
\end{equation*}
where $k\neq j$,
 and $\xi^2 = \xi_1^2 + \xi_2^2$,
i.e. $\sigma(\Lambda_{bj}^{1/2})
 \simeq 1+ \sqrt{\eta_{k}^2+
  \Xi_{bj}^2}$.
We recall the superscript
 $E_k$ meaning extension by zero
  across $\rho_k=0$
as in Section \ref{secNotation}, and
 we note
\begin{align}
\nonumber
 \|\Lambda_{bj}^{1/2} u_{bj}^{E_k}
\|_{L^{2}(\partial\Omega_j)}
 \simeq&
 \| u_{bj}^{E_k}
  \|_{W^{1/2}(\partial\Omega_j)}\\
\label{lambdaExtn}
  \simeq&
 \| u_{bj}
 \|_{W^{1/2}(\partial\Omega_j\cap\partial\Omega)}
\end{align}
by the Extension Theorem, 
 Theorem 1.4.2.4 in \cite{Gr}.

For each $u_{bj}$ we write
 as in \cite{CS}
\begin{align}
\nonumber
\left\| u_{bj} 
\right\|_{L^2(\partial\Omega_j\cap \partial\Omega)} 
 =&
\left\| u_{bj}^{E_k}
\right\|_{L^2(\partial\Omega_j)}  \\
\label{withTang}
\lesssim&
\left\| \Lambda_{bj}^{1/2} u^{E_k}
\right\|_{L^2(\Omega_j)}  
+ \left\| \Lambda_{bj}^{-1/2} 
\partial_{\rho_j}u^{E_k}
\right\|_{L^2(\Omega_j)}.
\end{align}
Note that we have
\begin{equation*}
\partial_{\rho_j}u^{E_k} 
 =\left(\partial_{\rho_j}u\right)^{E_k},
\end{equation*}
and from \eqref{lambdaExtn},
\begin{equation*}
\left\| \Lambda_{bj}^{1/2} u^{E_k}
\right\|_{L^2(\Omega_j)}  
 \simeq 
\left\| \left(\Lambda_{bj}^{1/2} u\right)^{E_k}
\right\|_{L^2(\Omega_j)}.
\end{equation*}
so that \eqref{withTang}
shows 
$\left\| u_{bj} 
\right\|_{L^2(\partial\Omega_j\cap \partial\Omega)} $ is bounded
 by a sum of terms of the form
\begin{align*}
 \left\| \left(\Lambda_{bj}^{1/2} u\right)^{E_k}
 \right\|_{L^2(\Omega_j)}+
 \left\| \Lambda_{bj}^{-1/2} 
 (\partial_{\rho_j} u)^{E_k}
\right\|_{L^2(\Omega_j)}
\end{align*}
Now write the operator, $\partial_{\rho_j}$, 
 as a combination of 
(components) of $\mdbar$, $\mdbar^{\ast}$,
 and tangential (to $\partial\Omega_j$) operators:
\begin{align*}
 \left\| \Lambda_{bj}^{-1/2} (\partial_{\rho_j} u)^{E_k}
\right\|_{L^2(\Omega_j)}
 \lesssim&  \left\| \Lambda_{bj}^{-1/2}
 \left( \mdbar u\right)^{E_k}
 \right\|_{L^2(\Omega_j)}
+
\left\| \Lambda_{bj}^{-1/2}\left( \mdbar^{\ast} u
\right)^{E_k}
\right\|_{L^2(\Omega_j)}\\
&+
\left\| \Lambda_{bj}^{1/2} u^{E_k}
\right\|_{L^2(\Omega_j)}\\
\lesssim&
  \left\|
 \mdbar u
 \right\|_{L^2(\Omega)}
 +
 \left\|  \mdbar^{\ast} u
 \right\|_{L^2(\Omega)}
 + \|u\|_{W^{1/2}(\Omega)}
.
\end{align*}
Combining this with \eqref{withTang}
 as well as Theorem 3.1 of \cite{MS98}, we get
$ u_{b} \in L^2(\partial\Omega)$, with estimates
\begin{equation}
\label{l2Base}
 \|u_b\|_{L^2(\partial\Omega)} 
  \lesssim \|f\|_{L^2(\Omega)}.
\end{equation}

We will concentrate on the 
 more difficult estimates in the 
microlocal region defined by 
 $\psi^-_k$; that is we assume
$-i\sigma(T_1) < - k \sqrt{|\sigma(T_2)|^2 +
	 |\sigma(\partial_{\rho_2})|^2}$.
We will also just consider the boundary,
$\partial\Omega_1$, the results being
 analogous on $\partial\Omega_2$.
We will thus drop the subscripts, writing
 $L_{b1}$ to mean $L_{2,b1}$
 in \eqref{bndryEqnForm}.
  From \eqref{bndryEqnForm}, the boundary 
equation reads
\begin{equation}
\label{bndrySub}
L_{b1} \Lb_{b1} u_{2,b1}^-
 = g
+\Psi^1_{b1,\psi^0}
u_{2,b1} 
 + \Psi^1_{\varepsilon,b1} u_{2,b1}^-
 +\Psi^{0}_{b1} u_{2,b1} + \Psi^1_{b1} E^-_2 u_b,
\end{equation}
where $g$ is of the form
 $ R_1\circ \Psi^1_{b1}\circ \partial_{\rho_1} \circ G_2f$.

 We first start with the assumption 
that $u_{b}\in C^{\infty}(\partial
 \Omega)$ so that integration by
parts can be performed.  
This is not
 a necessary assumption as the use
of regularizing operators could be used
 from the beginning (see \cite{K85}).  
We separate 
the regularizing argument in this paper, mainly
out of aesthetic concerns, but also due
 to the more complicated regularizing
of terms involving the 
 $E^-_2$ operator in \eqref{bndrySub}.
The assumption of smooth 
 forms will be removed in Section
\ref{secRegularize}.
 
Using an argument of Kohn \cite{K85},
we write
\begin{equation*}
T_1 = \frac{i}{\lambda}
\left(
L_{b1}\Lb_{b1} - \Lb_{b1} L_{b1}
\right)
\end{equation*} 
modulo $L_{b1}$ and $\Lb_{b1}$,
 where $\lambda>0$ on $\partial \Omega_1$.  
Sobolev $1/2$ estimates follow as in the
smooth case:
let $\varphi \in C^{\infty}(\partial\Omega_1)$ with 
 support away from 
$\rho_2\ge 0$, but still contained in 
 the region where $\zeta\equiv 1$, where
$\zeta$ is the cutoff defining the operator
 $\Psi^{-}_k$ as in 
  \eqref{symMic}.    Then we have by integration by 
 parts
\begin{equation}
\label{firstStepHalfEst}
\begin{aligned}
\|  \varphi u_{2,b1}^- \|_{1/2}^2
\lesssim& \left|(T_1 \varphi u_{2,b1}^-, \varphi u_{2,b1}^-) \right|\\
\lesssim& \| L_{b1} \varphi u_{2,b1}^- \|^2 +
\|\Lb_{b1} \varphi u_{2,b1}^-\|^2 + \|\varphi u_{2,b1}^-\|^2\\
\lesssim& \| \varphi  L_{b1}v \|^2 +
\| \varphi \Lb_{b1} u_{2,b1}^-\|^2 + \| u_{2,b1}^-\|^2.
\end{aligned}
\end{equation}
 The $L^2$-norms are with respect to 
$\partial\Omega_1$, and we
 write
$\| \cdot \|_{1/2}$ as a shorthand for
 $\| \cdot \|_{W^{1/2}(\partial\Omega_1)}$.  
In the case we need to 
 specify one boundary norm over another,
we will write explicitly the domain on which 
 the norms are calculated; otherwise 
the boundary, $\partial\Omega_1$, is to be
 the default.

We also have
\begin{align}
\nonumber
\|\varphi L_{b1} u_{2,b1}^- \|^2 +
\|\varphi &\Lb_{b1} u_{2,b1}^-\|^2 \lesssim\\
\nonumber
&-(\varphi \Lb_{b1} L_{b1} u_{2,b1}^-, \varphi u_{2,b1}^-)
+ O\left(
\|\varphi L_{b1}  u_{2,b1}^- \| \| u_{2,b1}^-\|
\right)\\
\label{estZero}
& -( \varphi L_{b1} \Lb_{b1}  u_{2,b1}^-, \varphi u_{2,b1}^-)
+ O\left(
\|\varphi \Lb_{b1}  u_{2,b1}^- \| \| u_{2,b1}^-\|
\right).
\end{align}

For the term
$\Lb_{b1} L_{b1} u_{2,b1}^-$, we write
 (in the support of $\varphi$)
\begin{align}
\nonumber
\Lb_{b1} L_{b1} u_{2,b1}^-
=& L_{b1}\Lb_{b1} u_{2,b1}^- + [\Lb_{b1},L_{b1}] u_{2,b1}^-\\
\label{commuteL}
=& g  + i\lambda T_1 u_{2,b1}^- 
+\Psi^1_{b1,\psi^0}
u_{2,b1} 
+
\Psi^{1}_{\varepsilon,b1}u_{2,b1}^-
+\Psi^{0}u_{2,b1}
 +\Psi^1_{b1}E^-_2 u_b.
\end{align}
In the support of $\psi^-_k$, the symbol
of $-iT_1$ is negative, hence we can write
\begin{equation*}
-(\varphi i\lambda T_1 u_{2,b1}^-,\varphi u_{2,b1}^-)
\lesssim \|u_{2,b1}^-\|^2,
\end{equation*}
which also follows from
G\aa rding's inequality.

Using this in \eqref{estZero} above,
we have
\begin{align*}
\|\varphi L_{b1} u_{2,b1}^- \|^2 +&
\|\varphi \Lb_{b1} u_{2,b1}^-\|^2 \lesssim\\
&2|(\varphi g, \varphi u_{2,b1}^-)|
+ | (\varphi
\Psi^1_{b1,\psi^0} u_{2,b1}, \varphi u_{2,b1})|
+ | (\varphi
\Psi^1_{\varepsilon,b1} u_{b}^-, \varphi u_{2,b1}^-)|\\
&+ \left| \left(\varphi \Psi^1_{b1} E^- u_b, u_{2,b1}^- \right)
 \right|  + \|u_{2,b1}^-\|^2
 \\
&+ O\left(
(
\|\varphi \Lb_{b1}  u_{2,b1}^- \| + \|\varphi L_{b1} u_{2,b1}^-\|)
\| u_{2,b1}^-\|
\right).
\end{align*}
Estimates of the terms on the right-side
 then yield estimates for
$L_{b1}$ and $\Lb_{b1}$ applied to $u_{2,b1}^-$
 which in turn yield $1/2$-estimates 
  for $u_{2,b1}^-$
(when combined with estimates for
 $u_{1,b2}^-$).  We apply this 
approach, but with
 weights, in order to obtain estimates
near a singularity.

We now look at
estimates as in 
 \eqref{firstStepHalfEst}
but without the assumption of 
 support away from the boundary
  singularities. 
For higher order estimates,
we work with the operator,
$\Lambda$, with symbol
\begin{equation*}
|\sigma(\Lambda)| \simeq 
\sqrt{|\sigma(T_1)|^2 + 
	|\sigma(T_2)|^2+
	|\sigma(\partial_{\rho_2})|^2},
\end{equation*}
on $\partial\Omega_1$.
   So that integration
by parts can still be used, we 
 multiply by factors of $\rho_2$
(recall we work with estimates on
 $\partial\Omega_1$);
in \eqref{firstStepHalfEst} we let
 the cutoff, $\varphi$, have support in a 
  neighborhood of the singularity, assumed to be
   at 0, and introduce a 
factor, $\rho_2^{2\alpha}$ for
 some integer $2\alpha\ge 1$,
and we also replace $u_{2,b1}^-$ with
 $\Lambda^{\alpha} u_{2,b1}^-$:
\begin{multline}
\label{halfEst}
\| \rho_2^{2\alpha+1} \Lambda^{\alpha} u_{2,b1}^-\|_{1/2}^2
\lesssim\\ \|\rho_2^{2\alpha+1}  L_{b1}  \Lambda^{\alpha} u_{2,b1}^-\|^2 +
\|\rho_2^{2\alpha+1}\Lb_{b1} \Lambda^{\alpha} u_{2,b1}^-\|^2 + \|\rho_2^{2\alpha}
\Lambda^{\alpha} u_{2,b1}^-\|^2.
\end{multline}
The 
 vanishing of the boundary terms arising
in the integration by parts
  occurs due 
to our assumption that
 $u_b \in C^{\infty}(\partial\Omega)$.
  We omit writing the cutoffs $\varphi$; they could
also be understood to be part of the $\Lambda$ operators.

For integer $\alpha$,
we use
\begin{align*}
\|\rho_2^{2\alpha} \Lambda^{\alpha} 
u_{2,b1}^-\|^2
\lesssim \sum_{l\le \alpha} \|\Psi_{b1}^{\alpha-l}
 \rho_2^{2\alpha-2l}  u_{2,b1}^-\|^2
\lesssim \sum_{l\le \alpha} \| \rho_2^{2\alpha-2l}  u_{2,b1}^-
\|_{\alpha-l}^2,
\end{align*}
whereas for $\alpha$ of the form
$\alpha = (2k+1)/2$, we use
\begin{align*}
\|\rho_2^{2k+1} \Lambda^{k+1/2} 
u_{2,b1}^-\|^2
\lesssim& \sum_{l\le k} \|\rho_2 \Psi_{b1}^{k-l+1/2} \rho_2^{2k-2l}  
u_{2,b1}^-\|^2\\
\lesssim& \sum_{l\le k} \| \rho_2^{2k-2l+1}  u_{2,b1}^-\|_{k-l+1/2}^2
+ \| u_{2,b1}^-\|^2\\
\lesssim& \sum_{l \le \lfloor \alpha
	\rfloor} 
\| \rho_2^{2\alpha-2l}  u_{2,b1}^-\|_{\alpha-l}^2
+ \| u_{2,b1}^-\|^2
.
\end{align*}
In each case we write
\begin{align}
\label{case21}
\|\rho_2^{2\alpha} \Lambda^{\alpha} 
 u_{2,b1}^-\|^2
\lesssim
\sum_{l \le \lfloor\alpha\rfloor} \| \rho_2^{2\alpha-2l}  u_{2,b1}^-\|_{\alpha-l}
+ \| u_{2,b1}^-\|
.
\end{align}

We consider the first term on the
 right-hand side of 
\eqref{halfEst}, and integrate by
 parts (again, for the time being,
 assuming the
  vanishing of the arising boundary 
   integrals at $\rho_2=0$):
\begin{align*}
(\rho_2^{2\alpha+1} L_{b1} \Lambda^{\alpha} 
 u_{2,b1}^-,
\rho_2^{2\alpha+1} &L_{b1} \Lambda^{\alpha}
 u_{2,b1}^-)\\
=&- (\rho_2^{2\alpha+1} \Lb_{b1} L_{b1} \Lambda^{\alpha} u_{2,b1}^-,
 \rho_2^{2\alpha+1}
\Lambda^{\alpha} u_{2,b1}^-)\\
&+
O\left(\|\rho_2^{2\alpha+1} L_{b1} \Lambda^{\alpha} u_{2,b1}^-\| 
\|\rho_2^{2\alpha}
  \Lambda^{\alpha}u_{2,b1}^-\|\right)\\
\lesssim& -(\rho_2^{2\alpha+1} 
\Lb_{b1} L_{b1} \Lambda^{\alpha} u_{2,b1}^-, 
\rho_2^{2\alpha+1} \Lambda^{\alpha} u_{2,b1}^-)\\
&+ s.c. \|\rho_2^{2\alpha+1}
 L_{b1}\Lambda^{\alpha} u_{2,b1}^-\|^2 +
\|\rho_2^{2\alpha}\Lambda^{\alpha}
 u_{2,b1}^-\|^2,
\end{align*}
or
\begin{align*}
\|\rho_2^{2\alpha+1} L_{b1}
 \Lambda^{\alpha} u_{2,b1}^-\|^2
\lesssim& - (\rho_2^{2\alpha+1} 
\Lb_{b1} L_{b1} \Lambda^{\alpha} u_{2,b1}^-, 
\rho_2^{2\alpha+1} \Lambda^{\alpha} u_{2,b1}^-)+
\|\rho_2^{2\alpha}\Lambda^{\alpha} u_{2,b1}^-\|^2.
\end{align*}

We now commute the $\Lambda^{\alpha}$ operator through the
$L$ derivatives 
 and use G\aa rding's inequality:
\begin{align}
\nonumber
-\big(\rho_2^{2\alpha+1} \Lb_{b1}& L_{b1} \Lambda^{\alpha} u_{2,b1}^-, 
\rho_2^{2\alpha+1} \Lambda^{\alpha} u_{2,b1}^-\big)
\\
\nonumber \lesssim&
-(\rho_2^{2\alpha+1}  L_{b1} \Lb_{b1} \Lambda^{\alpha} u_{2,b1}^-, 
\rho_2^{2\alpha+1}\Lambda^{\alpha} u_{2,b1}^-) \\
\nonumber
\lesssim& 
\big|\big(\rho_2^{2\alpha+1}\Lambda^{\alpha} L_{b1} \Lb_{b1}u_{2,b1}^-,
 \rho_2^{2\alpha+1} \Lambda^{\alpha}
  u_{2,b1}^-\big)\big|+
s.c. \|  \rho_2^{2\alpha+1}
L_{b1} \Psi_{b1}^{\alpha} u_{2,b1}^- \|^2\\
\label{LLGarding}
&+s.c. \|  \rho_2^{2\alpha+1}\Psi_{b1}^{\alpha}
\Lb_{b1}  u_{2,b1}^- \|^2
+
\|\rho_2^{2\alpha+1}\Lambda^{\alpha} u_{2,b1}^-\|^2.
\end{align}

The next relation
can be derived in a similar
manner as we did \eqref{case21}:
\begin{align}
\label{case211}
 \rho_2^{2\alpha+1} \Psi_{b1}^{\alpha} w
=&
\sum_{l\le \lfloor\alpha\rfloor} 
\Psi_{b1}^{\alpha-l} \rho_2^{2\alpha+1-2l}
w  + 
\Psi_{b1}^{-1/2}\rho_2w+ \Psi_{b1}^{-1}w
\end{align}
and in particular, when 
$w = L_{b1} u_{2,b1}^-$, 
\begin{multline}
\label{scL}
\rho_2^{2\alpha+1} \Psi_{b1}^{\alpha} 
L_{b1} u_{2,b1}^-
=\\
\sum_{l\le \lfloor\alpha\rfloor}
\Psi_{b1}^{\alpha-l} \rho_2^{2\alpha+1-2l}
L_{b1} u_{2,b1}^-  + 
 \Psi_{b1}^{1/2}\rho_2 u_{2,b1}^-+
\Psi_{b1}^{0} u_{2,b1}^-
\end{multline}  
with similar inequalities
in which $\Lb_{b1}$ replaces the $L_{b1}$
derivative.

We now use
 \eqref{bndrySub}, noting that the
   $\varphi$ cutoff allows us to use an operator 
$\Psi^1_{b1,\psi^0}$ as before,
in the first term on 
the right of \eqref{LLGarding}:
\begin{align}
\nonumber
\big|\big(\rho_2^{2\alpha+1} \Lb_{b1}& L_{b1} \Lambda^{\alpha} u_{2,b1}^-, 
\rho_2^{2\alpha+1} \Lambda^{\alpha} u_{2,b1}^-\big)\big|
\\ \lesssim&  
\nonumber
\big|(\rho_2^{2\alpha+1}\Lambda^{\alpha} g, \rho_2^{2\alpha+1} \Lambda^{\alpha}
  u_{2,b1}^-)\big|+
\big|(\rho_2^{2\alpha+1}\Lambda^{\alpha}
\Psi^1_{b1,\psi^0}u_{2,b1} ,
\rho_2^{2\alpha+1} \Lambda^{\alpha}
u_{2,b1}^-)\big|\\  
\label{LLTerm}
&+
\big|(\rho_2^{2\alpha+1}\Lambda^{\alpha}
 \Psi^1_{\varepsilon,b1}u_{2,b1}^- ,
  \rho_2^{2\alpha+1} \Lambda^{\alpha}
u_{2,b1}^-)\big|\\
\nonumber
&+
 \big|(\rho_2^{2\alpha+1}\Lambda^{\alpha}
 \Psi^1_{b1} E^-_2 u_{b} ,
 \rho_2^{2\alpha+1} \Lambda^{\alpha}
 u_{2,b1}^-)\big|\\
 \nonumber
&+
s.c. \|  \rho_2^{2\alpha+1}
\Psi_{b1}^{\alpha} L_{b1}  u_{2,b1}^-\|^2
+s.c. \|  \rho_2^{2\alpha+1}\Psi_{b1}^{\alpha}
\Lb_1 u_{2,b1}^- \|^2+
\|\rho_2^{2\alpha+1}\Psi_{b1}^{\alpha}u_{b}\|^2
.
\end{align}
Using (a cruder form of \eqref{case211})
\begin{equation*}
 \rho_2^{2\alpha+1} \Psi_{b1}^{\alpha} 
=
\sum_{l\le \lceil\alpha\rceil}
\Psi_{b1}^{\alpha-l} \rho_2^{2\alpha+1-2l}
+ 
\Psi_{b1}^{-1} 
\end{equation*}
and
\begin{equation*}
 \rho_2^{2\alpha+1}\Psi^{\alpha+1}_{\varepsilon,b1}
  = 
\Psi^{\alpha+1}_{\varepsilon,b1}  \rho_2^{2\alpha+1}+
 \sum_{1\le l\le \lceil\alpha\rceil}  
 \Psi_{b1}^{\alpha-l+1} \rho_2^{2\alpha+1-2l}
   + 
 \Psi_{b1}^{0} 
\end{equation*}
in the third term on the right-hand side of
 \eqref{LLTerm}
we obtain
\begin{multline*}
 \big|
   (\rho_2^{2\alpha+1}\Lambda^{\alpha}
   \Psi^1_{\varepsilon,b1}u_{2,b1}^- ,
   \rho_2^{2\alpha+1} \Lambda^{\alpha}
   u_{2,b1}^-)
 \big| \lesssim
  s.c.\| \rho_2^{2\alpha+1} u_{2,b1}^-
  \|_{1/2+\alpha}^2\\
+ \sum_{1\le l\le \lceil\alpha\rceil}   
\| \rho_2^{2\alpha+1-2l} u_{2,b1}^-
\|_{1/2+\alpha-l}^2
 + \|  u_{b}
 \|^2.
\end{multline*}
Similarly, the second term on the right-hand side of
\eqref{LLTerm} can be estimated by
\begin{align*}
\big|(\rho_2^{2\alpha+1}\Lambda^{\alpha}
\Psi^1_{b1,\psi^0}u_{2,b1} ,
\rho_2^{2\alpha+1} \Lambda^{\alpha}
u_{2,b1}^-)\big|
 \lesssim& 
\sum_{l \le \lceil\alpha\rceil} \| \rho_2^{2\alpha+1-2l}  u_{2,b1}^0\|_{\alpha+1-l}^2
+ \| u_{2,b1}^0\|^2
\\
&+
\sum_{l \le \lfloor\alpha\rfloor} \| \rho_2^{2\alpha-2l}  u_{2,b1}^-\|_{\alpha-l}^2
+ \| u_{2,b1}^-\|^2.
\end{align*}

For the fourth term on the right
 of \eqref{LLTerm}
we distinguish the two cases
 for each of the terms composing 
$E^-_2u_b$:
$\alpha$ is of the form
 $i)$ 
$\alpha = k +1/2$, or $ii)$ $\alpha = k$,
for $k$ an integer.
We recall that
\begin{multline*}
E^-_2 u_b
= 
2 \Psi^-\circ
|D_{b1}|\circ R_1 \circ 
\Theta_2^+ \circ R_2 \circ 
\Theta_1^+ u_{2,b1}\\
+\Psi^{-1}_{b1} u_{2,b1}
+\lre^{21}_{-1/2} u_{b2}
+ \lre^{21}_{-3/2} \left(\partial_{\rho_2} 
P_2(u_b) \big|_{\partial\Omega_2}\right)
+R_{b1}^{-\infty}.
\end{multline*} 

Consider the term
 $|D_{b1}|\circ R_1 \circ 
 \Theta_2^+ \circ R_2 \circ 
 \Theta_1^+ u_{2,b1}$, which we will
write as $\lre^{11}_0u_b$.  
In case $i)$ we estimate
\begin{align*}
\big| (\rho_2^{2\alpha+1}\Lambda^{\alpha}
\Psi^1_b &\lre^{11}_0 u_{b} ,
\rho_2^{2\alpha+1} \Lambda^{\alpha}
u_{2,b1}^-)\big|
\\
\lesssim &
\sum_{0\le l\le \lfloor\alpha\rfloor+1}   
\| \rho_2^{2\alpha+2-2l} \lre^{11}_0 u_{b}
\|_{1+\alpha-l}^2
+ \|  \lre^{11}_0u_{b} \|^2\\
&+ \sum_{0\le l\le \lfloor\alpha\rfloor}   
\| \rho_2^{2\alpha-2l} u_{2,b1}^-
\|_{\alpha-l}^2
+ \|  u_{2,b1}^-\|^2
.
\end{align*}
Note that
\begin{equation*}
 \rho_2 |D_{b1}|\circ R_1 \circ 
 \Theta_2^+ \circ R_2 \circ 
 \Theta_1^+ u_{2,b1}
= \lre^{11}_{-1,1/2} u_{2,b1}
\end{equation*}
by \eqref{lowerOrder}.

In case $i)$ for the
 first term on the right of the
inequality we have
\begin{align*}
\sum_{l \le k+1}
\Big\| \rho^{2k -2l +3} 
\lre^{11}_{0} u_b
\Big\|_{1/2 + k +1 -l}
\simeq& \left\| \rho_2 \lre^{11}_{0} u_b
\right\|_{W^{1/2,k+1}(\partial\Omega_1 
	\cap\partial\Omega,\rho_2,2)}\\
\lesssim& 
\left\| \lre^{11}_{-1,1/2} u_b
\right\|_{W^{1/2,k+1}(\partial\Omega_1 
	\cap\partial\Omega,\rho_2,2)}\\
\lesssim& 
\left\|  u_{2,b1}
\right\|_{W^{-1/2,k+1}(\partial\Omega_1 
	\cap\partial\Omega,\rho_2,2)}\\
\lesssim
& s.c.
\sum_{l \le \lfloor \alpha \rfloor}
\| \rho_2^{2\alpha -2l } u_{2,b1}
\|_{\alpha-l}
 + s.c.\|u_{2,b1}\|.
\end{align*}

In case $ii)$ we estimate
\begin{align*}
\big| (\rho_2^{2\alpha+1}\Lambda^{\alpha}
\Psi^1_b &\lre^{11}_0 u_{b},
\rho_2^{2\alpha+1} \Lambda^{\alpha}
u_{2,b1}^-)\big|
\\
\lesssim &
\sum_{0\le l\le \lceil\alpha\rceil}   
\| \rho_2^{2\alpha+1-2l} \lre^{11}_0 u_{b}
\|_{1/2+\alpha-l}^2
+ \|  \lre^{11}_0u_{b} \|^2\\
&+ s.c. 
 \sum_{0\le l\le \lceil\alpha\rceil}   
\| \rho_2^{2\alpha+1-2l} u_{2,b1}^-
\|_{1/2+\alpha-l}^2
+ \|  u_{2,b1}^-\|^2
.
\end{align*}
The first term on the right of
 the inequality can be estimated 
(in case $ii)$) by
\begin{align*}
\sum_{l \le k}
\Big\| \rho^{2k -2l +1} 
\lre^{11}_{0} u_b
\Big\|_{1/2 + k -l}
\simeq& \left\| \rho_2 \lre^{11}_{0} u_b
\right\|_{W^{1/2,k}(\partial\Omega_1 
	\cap\partial\Omega,\rho_2,2)}\\
\lesssim& 
\left\| \lre^{11}_{-1,1/2} u_b
\right\|_{W^{1/2,k}(\partial\Omega_1 
	\cap\partial\Omega,\rho_2,2)}\\
\lesssim& 
\left\|  u_b
\right\|_{W^{-1/2,k}(\partial\Omega_1 
	\cap\partial\Omega,\rho_2,2)}\\
\lesssim
& 
\sum_{l \le \lfloor \alpha \rfloor}
\| \rho^{2\alpha -2l} u_{b}
\|_{\alpha-l}.
\end{align*}

It remains to write estimates 
 for the $\lre^{21}_{-\beta}$ terms in
the $E^-_2$ operator.  We will go
through
 the estimates for the 
$\lre^{21}_{-3/2} \partial_{\rho_2}P_2(u_b) \big|_{\partial\Omega_2}$
 term, the remaining estimates for
the terms involving $\lre^{21}_{-1/2} u_b$
 being proved similarly.

We write
\begin{align}
\nonumber
\big| (\rho_2^{2\alpha+1}\Lambda^{\alpha}
\Psi^1_b &
\lre^{21}_{-3/2} \partial_{\rho_2}P_2(u_b) ,
\rho_2^{2\alpha+1} \Lambda^{\alpha}
u_{2,b1}^-)\big|
\\
\nonumber
\lesssim &
\sum_{0\le l\le \lceil\alpha\rceil}   
\| \rho_2^{2\alpha+1-2l} 
\lre^{21}_{-3/2} \partial_{\rho_2}P_2(u_b)
\|_{1/2+\alpha-l}^2
+ \|  \lre^{21}_{-3/2} \partial_{\rho_2}P_2(u_b)\|^2\\
\label{caseiiDrho}
&+ s.c. 
\sum_{0\le l\le \lceil\alpha\rceil}   
\| \rho_2^{2\alpha+1-2l} u_{2,b1}^-
\|_{1/2+\alpha-l}^2
+ \|  u_{2,b1}^-\|^2
.
\end{align}
The first term on the right of
the inequality can be estimated 
(in case $i)$) by
\begin{align*}
\sum_{l \le k+1}
\Big\| &\rho_2^{2k -2l +2} 
\lre^{21}_{-3/2} \partial_{\rho_2}P_2(u_b)\big|_{\partial\Omega_2}
\Big\|_{ k+1 -l}\\
\lesssim & 
 \sum_{l \le k}
 \Big\| \rho_2^{2k -2l+2} 
 \lre^{21}_{-3/2} \partial_{\rho_2}P_2(u_b)\big|_{\partial\Omega_2}
 \Big\|_{ k+1 -l}
+  \Big\| 
\lre^{21}_{-3/2} \partial_{\rho_2}
P_2(u_b)\big|_{\partial\Omega_2}
\Big\|
\\
\lesssim & 
\sum_{l \le k}
\Big\| \rho_2^{2k -2l+1} 
\lre^{21}_{-3/2} \partial_{\rho_2}
P_2(u_b)\big|_{\partial\Omega_2}
\Big\|_{ k -l}\\
&+ \sum_{l \le k}
\Big\| \rho_2^{2k -2l+2} 
\lre^{21}_{-1/2} \partial_{\rho_2}
P_2(u_b)\big|_{\partial\Omega_2}
\Big\|_{ k -l}
+  \Big\| 
\lre^{21}_{-3/2} \partial_{\rho_2}
P_2(u_b)\big|_{\partial\Omega_2}
\Big\|.
\end{align*}
Note that the
 $\lre^{21}_{-1/2} 
$ operator is
of the form
\begin{equation*}
|D_1|\circ R_1 \circ \Psi^{-2}+
 |D_1|^2 \circ R_1\circ \Psi^{-1}
  \circ R_2\circ \Psi^{-1}
  \circ R_1\circ \Psi^{-2},
\end{equation*}
by
\eqref{form3/2}, 
and hence
with this $\lre^{21}_{-1/2}$ operator
 we have
\begin{equation*}
 \rho_2 \lre^{21}_{-1/2} 
  \partial_{\rho_2}P_2(u_b)\big|_{\partial\Omega_2}
 =  \lre^{21}_{-3/2} 
 \partial_{\rho_2}P_2(u_b)\big|_{\partial\Omega_2}.
\end{equation*}
With this property, 
 we continue the estimates:
\begin{align*}
\sum_{l \le k+1}
\Big\| \rho_2^{2k -2l +2} 
\lre^{21}_{-3/2} \partial_{\rho_2}&
 P_2(u_b)\big|_{\partial\Omega_2}
\Big\|_{ k+1 -l}\\
\lesssim & 
\sum_{l \le k}
\Big\| \rho_2^{2k -2l+1} 
\lre^{21}_{-3/2} \partial_{\rho_2}
P_2(u_b)\big|_{\partial\Omega_2}
\Big\|_{ k -l}
+  s.c. \|u_b\| \\
\lesssim& 
s.c. 
 \left\| \lre^{21}_{-3/2} \partial_{\rho_2}
 P_2(u_b)\big|_{\partial\Omega_2}
\right\|_{W^{0,k}(\partial\Omega_1 
	\cap\partial\Omega,\rho_2,2)}
+s.c. \|u_b\|
\\
\lesssim& 
s.c. 
\left\| u_{b}
\right\|_{W^{-1/2,k}(\partial\Omega,\rho,2)}
+s.c. \|u_b\|\\
\lesssim
& s.c.
 \sum_{l \le \lfloor \alpha \rfloor}
 \| \rho^{2\alpha -2l} u_{b}
 \|_{W^{\alpha-l}(
 	\partial\Omega)}
+s.c.\|u_{b}\|,
\end{align*}
modulo estimates of smooth terms,
\begin{equation*}
 \| P(u_b) \|_{-\infty}+\left\|
 \partial_{\rho}   P(u_b) \big|_{\partial\Omega}
 \right\|_{-\infty} 
 + \|
 u_b \|_{-\infty} \lesssim \|u_b\|_{L^2(\partial\Omega)}.
\end{equation*}

On the other hand, in case $ii)$ we
 can estimate the first term on 
right of \eqref{caseiiDrho}
by
\begin{align*}
 \sum_{0\le l\le k}   
 \| \rho_2^{2k+1-2l} 
 \lre^{21}_{-3/2} \partial_{\rho_2}&
 P_2(u_b)\big|_{\partial\Omega_2}
 \|_{1/2+k-l}^2\\
\lesssim&
 s.c. \|  \lre^{21}_{-3/2} \partial_{\rho_2}
 P_2(u_b)\big|_{\partial\Omega_2}
 \|_{W^{1/2,k}(\partial\Omega_1 
 	\cap\partial\Omega,\rho_2,2)}\\
\lesssim&
s.c. \|  \partial_{\rho_2}
P_2(u_b)\big|_{\partial\Omega_2}
\|_{W^{-1,k}(\partial\Omega_2
	\cap\partial\Omega,\rho_2,2)}\\
\lesssim&
s.c. \|  u_b
\|_{W^{0,k}(\partial\Omega,\rho,2)}\\
\lesssim
& 
\sum_{l \le \lfloor \alpha \rfloor}
\| \rho^{2\alpha -2l} u_{b}
\|_{W^{\alpha-l}(\partial\Omega)}.
\end{align*}
In fact, the estimates for the
 $\lre^{21}_{-3/2} \partial_{\rho_2}
 P_2(u_b)\big|_{\partial\Omega_2}$
could be improved by taking into account the
 vanishing of $u_2$ along $\partial\Omega_2$, but
the weaker estimates we have above suffice for
 our purposes.

Putting this together, we have
\begin{align*}
\|\rho_2^{2\alpha+1} L_{b1}
\Lambda^{\alpha} u_{2,b1}^-\|^2
\lesssim& 
 \| \rho_2^{2\alpha+1} \Lambda^{\alpha} 
 g\|_{-1/2}^2
+
  s.c.\| \rho_2^{2\alpha+1} u_{2,b1}^-
\|_{1/2+\alpha}^2\\
&+ \sum_{1\le l\le \lceil\alpha\rceil}   
\| \rho_2^{2\alpha+1-2l} u_{2,b1}^-
\|_{1/2+\alpha-l}^2
+ \|  u_{2,b1}^-
\|^2\\
&+\sum_{l \le \lceil\alpha\rceil} \| \rho_2^{2\alpha+1-2l}  u_{2,b1}^0\|_{\alpha+1-l}^2
+ \| u_{2,b1}^0\|^2
\\
 &
 +s.c. \|  \rho_2^{2\alpha+1}
L_{b1} \Psi_b^{\alpha} u_{2,b1}^- \|^2
 +s.c. \|  \rho_2^{2\alpha+1}
\Lb_{b1} \Psi_b^{\alpha} u_{2,b1}^- \|^2\\
&+\sum_{l \le \lfloor\alpha\rfloor} \| \rho^{2\alpha-2l}  u_{b}\|_{W^{\alpha-l}(\partial\Omega)}^2
+ \| u_{b}\|^2.
\end{align*}

Hence, using \eqref{case21}, \eqref{case211}, and
 \eqref{scL}, we have
\begin{align}
\nonumber
\|\rho_2^{2\alpha+1}& L_{b1}
\Lambda^{\alpha} u_{2,b1}^-\|^2
\lesssim\\
\nonumber
&  \sum_{l\le \lceil \alpha \rceil}
\| \rho_2^{2\alpha-2l+1}  g\|_{\alpha-l-1/2}^2
+  \|\Psi_b^{-1/2} g\|_{-1/2}^2+
s.c.\| \rho_2^{2\alpha+1} u_{2,b1}^-
\|_{1/2+\alpha}^2\\
\nonumber
&+ \sum_{1\le l\le \lceil\alpha\rceil}   
\| \rho_2^{2\alpha+1-2l} u_{2,b1}^-
\|_{1/2+\alpha-l}^2
+\sum_{l \le \lfloor\alpha\rfloor} \| \rho^{2\alpha-2l}  u_{b}\|_{W^{\alpha-l}(\partial\Omega)}^2\\
\nonumber
&+\sum_{l \le \lceil\alpha\rceil} \| \rho_2^{2\alpha+1-2l}  u_{2,b1}^0\|_{\alpha+1-l}^2+ \|  u_{b}
\|^2
\\
&+
s.c. \sum_{l\le \lfloor \alpha \rfloor}
\| \rho_2^{2\alpha-2l+1}  \Lb_{b1} u_{2,b1}^-\|_{\alpha-l}^2+
s.c. \sum_{l\le \lfloor \alpha \rfloor}
\| \rho_2^{2\alpha-2l+1}  L_{b1} u_{2,b1}^-\|_{\alpha-l}^2
\label{rLLamSum}
.
\end{align}

Similarly, for any $\Psi_{b1}^{\alpha}$, we can estimate
$\|\rho_2^{2\alpha+1} L_{b1} \Psi_{b1}^{\alpha} u_{2,b1}^-\|^2$ by the
 right-hand side of \eqref{rLLamSum}.
And since
\begin{align*}
\|\rho_2^{2\alpha+1} L_{b1} u_{2,b1}^-\|^2_{\alpha} \lesssim&
\sum_{l\le \lfloor\alpha \rfloor}
\| \rho_2^{2\alpha-2l+1}  L_{b1} 
 \Psi_{b1}^{\alpha-l} u_{2,b1}^-\|^2
+\|\rho_2 u_{2,b1}^-\|_{1/2}^2\\
&+ \sum_{l\le \lfloor\alpha \rfloor} \| 
 \rho_2^{2\alpha-2l+1} \Psi_{b1}^{\alpha-l}
u_{2,b1}^-\|^2
+\|u_{2,b1}^-\|^2\\
\lesssim&
\sum_{l\le \lfloor\alpha \rfloor}
\| \rho_2^{2\alpha-2l+1}  L_{b1}
  \Psi_{b1}^{\alpha-l} u_{2,b1}^-\|^2
+\|\rho_2 u_{2,b1}^-\|_{1/2}^2\\
&+ \sum_{l\le \lfloor\alpha \rfloor} \| \rho_2^{2\alpha-2l+1} 
u_{2,b1}^-\|_{\alpha-l}^2
+\| u_{2,b1}^-\|^2
,
\end{align*}
we have 
\begin{align}
\nonumber
\sum_{l\le \lfloor\alpha \rfloor} \|& \rho_2^{2\alpha-2l+1}  L_{b1} u_{2,b1}^-\|_{\alpha-l}^2
\lesssim\\ 
\nonumber
&  \sum_{l\le \lceil \alpha \rceil}
\| \rho_2^{2\alpha-2l+1}  g\|_{\alpha-l-1/2}^2
+  \|\Psi_{b1}^{-1}g\|^2+
s.c.\| \rho_2^{2\alpha+1} u_{2,b1}^-
\|_{1/2+\alpha}^2
\\
\nonumber
&+ \sum_{1\le l\le \lceil\alpha\rceil}   
\| \rho_2^{2\alpha+1-2l} u_{2,b1}^-
\|_{1/2+\alpha-l}^2
+\sum_{l \le \lfloor\alpha\rfloor} \| \rho^{2\alpha-2l}  u_{b}\|_{W^{\alpha-l}(\partial\Omega)}^2
\\
\nonumber
&+\sum_{l \le \lceil\alpha\rceil} \| \rho_2^{2\alpha+1-2l}  u_{2,b1}^0\|_{\alpha+1-l}^2+ \|  u_{b}
\|^2
\\
\label{boundLTerm}
&+
s.c. \sum_{l\le \lfloor \alpha \rfloor}
\| \rho_2^{2\alpha-2l+1}  \Lb_{b1} u_{2,b1}^-\|_{\alpha-l}^2+
s.c. \sum_{l\le \lfloor \alpha \rfloor}
\| \rho_2^{2\alpha-2l+1}  L_{b1} u_{b}\|_{\alpha-l}^2
.
\end{align}

We similarly have that
\begin{equation*}
\sum_{l\le \lfloor\alpha \rfloor} \| \rho_2^{2\alpha-2l+1} 
 \Lb_{b1} u_{2,b1}^-\|_{\alpha-l}^2
\end{equation*}
is bounded by the right-hand side of 
 \eqref{boundLTerm}.

In particular, from \eqref{halfEst},
we have
\begin{align}
\nonumber
\| \rho_2^{2\alpha+1}& \Psi_{b1}^{\alpha} u_{2,b1}^- \|_{1/2}^2
\lesssim\\
\label{psiVEst}
&  \sum_{l\le \lceil \alpha \rceil}
\| \rho_2^{2\alpha-2l+1}  g\|_{\alpha-l-1/2}^2
+  \|\Psi_{b1}^{-1}g\|^2+
s.c.\| \rho_2^{2\alpha+1} u_{2,b1}^-
\|_{1/2+\alpha}^2\\
\nonumber
&+ \sum_{1\le l\le \lceil\alpha\rceil}   
\| \rho_2^{2\alpha+1-2l} u_{2,b1}^-
\|_{1/2+\alpha-l}^2
+\sum_{l \le \lfloor\alpha\rfloor} \| \rho^{2\alpha-2l}  u_{b}\|_{W^{\alpha-l}(\partial\Omega)}^2
\\
\nonumber
&+\sum_{l \le \lceil\alpha\rceil} \| \rho_2^{2\alpha+1-2l}  u_{2,b1}^0\|_{\alpha+1-l}^2+ \|  u_{b}
\|^2.
\end{align}

In analogy with
 \eqref{case211}, we write
\begin{equation*}
 \Psi_{b1}^{\alpha} \rho_2^{2\alpha+1}
   = \sum_{l \le \lceil \alpha\rceil}
    \rho_2^{2\alpha-2l+1} \Psi_{b1}^{\alpha-l}
    + \Psi_{b1}^{-1}
\end{equation*}
so that we have
\begin{align*}
 \| \rho_2^{2\alpha+1} u_{2,b1}^- 
 \|_{\alpha+1/2}^2
 \lesssim& 
\sum_{l \le \lfloor \alpha\rfloor}
 \| \rho_2^{2\alpha-2l+1} 
 \Psi_{b1}^{\alpha-l} u_{2,b1}^- \|_{1/2}^2
 + \| u_{2,b1}^-\|^2.
\end{align*}
Then using \eqref{psiVEst} for the
 terms in the summation on 
the right yields
\begin{align}
\nonumber
\| \rho_2^{2\alpha+1} & u_{2,b1}^-\|_{\alpha+1/2}^2
\lesssim\\
\label{multEst}
&  \sum_{l\le \lceil \alpha \rceil}
\| \rho_2^{2\alpha-2l+1}  g\|_{\alpha-l-1/2}^2
+  \|\Psi_b^{-1}g\|^2+
s.c.\| \rho_2^{2\alpha+1} u_{2,b1}^-
\|_{1/2+\alpha}^2\\
\nonumber
&+ \sum_{1\le l\le \lceil\alpha\rceil}   
\| \rho_2^{2\alpha+1-2l} u_{2,b1}^-
\|_{1/2+\alpha-l}^2
+\sum_{l \le \lfloor\alpha\rfloor} \| \rho^{2\alpha-2l}  u_{b}\|_{W^{\alpha-l}(\partial\Omega)}^2
\\
\nonumber
&+\sum_{l \le \lceil\alpha\rceil} \| \rho_2^{2\alpha+1-2l}  u_{2,b1}^0\|_{\alpha+1-l}^2+ \|  u_{b}
\|^2
.
\end{align}

The estimates will 
 be obtained by induction.  
To illustrate the process we calculate 
the first few estimates,
starting with
$\| \rho_2 u_{2,b1}^-\|_{1/2}^2$.  
 We continue to refrain from writing the boundary over 
which the norms are taken in the cases
 where it is clear from the boundary distribution.
Thus, for instance we write
$\| \rho_2 u_{2,b1}\|_{1/2}^2$ to mean
 the $W^{1/2}(\partial\Omega_1\cap\partial\Omega)$
norm, and
 $\| \rho_1 u_{1,b2}\|_{1/2}^2$ to mean
 the $W^{1/2}(\partial\Omega_2\cap\partial\Omega)$
 norm.

From
\eqref{multEst}, we have
\begin{align}
\nonumber
\| \rho_2 u_{2,b1}^-\|_{1/2}^2
\lesssim& \|g\|_{-1/2}^2 + \|u_{b}\|^2
+\| \rho_2 u_{2,b1}^0\|_{1}^2\\
\label{guEst}
\lesssim& \|g\|_{-1/2}^2 + \|u_{b}\|^2
 + \|f\|^2_{L^2(\Omega)},
\end{align}
where we use the estimates in \eqref{EstPlusZero} 
 for the $u_{2,b1}^0$ term.
To estimate 
 $g = R_1\circ \Psi^1_{b1}\circ \partial_{\rho_1} G_2(f)$,
 we use Corollary \ref{corGreenNormDerEst}:
\begin{align*}
 \|R_1\circ \Psi^1_{b1}\circ 
 \partial_{\rho_1} \circ G_2(f)\|_{-1/2} \lesssim&  
  \| \partial_{\rho_1} \circ G_2(f)
   \|_{W^{1/2}(\partial\Omega_1\cap
   	 \partial\Omega)}\\
\lesssim&  
\| f \|_{L^2(\Omega)}.
\end{align*} 
Together with \eqref{l2Base} to estimate
 $\|u_{b}\|_{L^2(\partial\Omega)}$
in \eqref{guEst}, this yields
\begin{equation*}
 \| \rho_2 u_{2,b1}^-\|_{1/2}^2
  \lesssim \| f\|_{L^2(\Omega)}^2.
\end{equation*} 
Combining this with weighted $1/2$
 estimates for
 $u_{2,b1}^{+}$ and $u_{2,b1}^0$
as well as the analogous estimates for
 $u_{1,b2}$ we get
\begin{equation}
\label{firstStep}
\| \rho_2 u_{2,b1}\|_{1/2}^2
+\| \rho_1 u_{1,b2}\|_{1/2}^2
\lesssim \| f\|_{L^2(\Omega)}^2.
\end{equation} 
 
Next, we calculate from
 \eqref{multEst}
\begin{align*}
\| \rho_2^2u_{2,b1}^-\|_{1}^2
\lesssim& 
 \| \rho_2^2 g\|^2 + \|g\|_{-1}^2+
 \|\rho_2 
u_{2,b1}\|_{1/2}^2
+
\|\rho_1 
u_{1,b2}\|_{1/2}^2\\
& + 
 \|\rho_2^2 u_{2,b1}^0 \|_{3/2} + \| u_{2,b1}^0 \|_{1/2}
+\|u_b\|^2.
\end{align*}
To estimate $\| \rho_2^2 g\|^2 $,
we use
\begin{align*}
 \|\rho_2^2 \Psi^1_b\circ 
  \partial_{\rho_1} \circ G_2(f)
 \| \lesssim&
\|\rho_2^2 \circ 
\partial_{\rho_1} \circ G_2(f)
\|_1
 + \|
 \partial_{\rho_1} \circ G_2(f)
 \|\\
\lesssim&
 \| f \|_{W^{0,1}(\Omega,\rho_2, 2)},
\end{align*}
by Corollary \ref{corGreenNormDerEst}.
We also use, from \eqref{EstPlusZero},
\begin{align*}
  \|\rho_2^2 u_{2,b1}^0 \|_{3/2}^2 + \| u_{2,b1}^0 \|_{1/2}^2
   \lesssim& s.c. \|u_{2,b1}^0\|^2_{W^{1,1}(\partial\Omega_1\cap
   	 \partial\Omega,\rho_2,2)} + \|u_b\|^2\\
  \lesssim&
  s.c.\| u_{2,b1}
  \|_{W^{0,1}(\partial\Omega_1,\rho_{2},2)}^2
  + s.c. \| u_{1,b2} 
  \|_{W^{0,1}(\partial\Omega_2,\rho_{1},2)}^2\\
  & + \| f 
  \|_{W^{0,1}(\Omega,\rho_{2},2)}^2.
\end{align*}

Combining these estimates with those
 for $\rho_2^2 u_{2,b1}^+$ and $\rho_2^2 u_{2,b1}^0$,
 as well as estimates for $u_{1,b2}$, we have
\begin{equation*}
 \| \rho_2^2u_{2,b1}^-\|_{1}^2
  \lesssim
  \| f \|_{W^{0,1}(\Omega,\rho_2, 2)}^2,
\end{equation*}
and
\begin{equation*}
\| \rho_2^2u_{2,b1}\|_{1}^2
+\| \rho_1^2u_{1,b2}\|_{1}^2
\lesssim
\| f \|_{W^{0,1}(\Omega,\rho_2, 2)}^2.
\end{equation*}

Next we can calculate
$\|\rho_2^3 u_{2,b1}^-\|_{3/2}$
using
\eqref{multEst} (with $\alpha=1$).
\begin{align*}
\| \rho_2^{3}  u_{2,b1}^-\|_{3/2}^2
\lesssim& 
 \| \rho_2^3 g\|_{1/2}^2 + 
  \|\rho_2 g\|_{-1/2}^2+
\| g\|_{-1}^2\\
&+
\| \rho_2^{2}  u_{2,b1}\|_{1}^2 
+
\| \rho_1^{2}  u_{1,b2}\|_{1}^2 \\
&+\| \rho_2  u_{2,b1}\|_{1/2}^2
+\| \rho_1  u_{1,b2}\|_{1/2}^2\\
&+\|\rho_2^3 u_{2,b1}^0 \|_{2}^2 + 
 \| \rho_2u_{2,b1}^0 \|_{1}^2 
+ \|u_{b}\|^2
.
\end{align*}
The terms involving $u_{2,b1}^0$ can be handled as 
 before with
\begin{align*}
   \|\rho_2^3 u_{2,b1}^0 \|_{2}^2 + 
   \| \rho_2u_{2,b1}^0 \|_{1}^2 
 \lesssim&
 s.c.\| u_{2,b1}
 \|_{W^{0,1}(\partial\Omega_1,\rho_{2},2)}^2
 + s.c. \| u_{1,b2} 
 \|_{W^{0,1}(\partial\Omega_2,\rho_{1},2)}^2\\
 & + \| f 
 \|_{W^{0,1}(\Omega,\rho_{2},2)}^2.
\end{align*}

The only remaining term we need to estimate is the 
$\| \rho_2^3 g\|_{1/2}^2$ term.
 This follows as above:
\begin{align*}
 \| \rho_2^3 \Psi^1\circ 
 \partial_{\rho_1} \circ G_2(f)\|_{1/2}
  \lesssim& 
\| \rho_2^2
\partial_{\rho_1} \circ G_2(f)\|_{3/2}
+  \| 
\partial_{\rho_1} \circ G_2(f)\|_{1/2}\\
 \simeq &
\| R_1 \circ \partial_{\rho_1} \circ G_2(f)
 \|_{W^{1/2,1}(\partial\Omega_1 
 	 \cap \partial\Omega,\rho_2,2)}\\
 \lesssim&
  \| f \|_{W^{0,1}(\Omega,\rho_2, 2)}.
\end{align*}
We thus have
\begin{equation*}
 \| \rho_2^{3} u_{2,b1}^-\|_{3/2}^2
 \lesssim \| f \|_{W^{0,1}(\Omega,\rho_2, 2)}^2
\end{equation*}
which can be combined with the
 other weighted $3/2$ estimates in the usual
way.

For a last illustrative step, we
 estimate
 $\| \rho_2^4 u_{2,b1}^-\|_2^2$,
  for which we use
\eqref{multEst} (with $\alpha=3/2$):
\begin{align*}
\| \rho_2^{4}  u_{2,b1}^-\|_{2}^2
\lesssim& 
\| \rho_2^4 g\|_{1}^2 + 
\|\rho_2^2 g\|^2+
\| g\|_{-1}^2\\
&
+
\| \rho_2^{3}  u_{2,b1}\|_{3/2}^2 
+
\| \rho_1^{3}  u_{1,b2}\|_{3/2}^2 
+
\| \rho_2^{2}  u_{2,b1}\|_{1}^2 
+
\| \rho_1^{2}  u_{1,b2}\|_{1}^2 \\
&+\| \rho_2  u_{2,b1}\|_{1/2}^2 
+\| \rho_1  u_{1,b2}\|_{1/2}^2 \\
&+ s.c. \|u_{2,b1}^0\|^2_{W^{1,2}(\partial\Omega_1\cap
	\partial\Omega,\rho_2,2)} + \|u_b\|^2
.
\end{align*}
We note
\begin{align*}
\| \rho_2^4 g\|_{1}
\lesssim& 
\| \rho_2^4
\partial_{\rho_1} \circ G_2(f)\|_{2}
+  \| \rho_2^2
\partial_{\rho_1} \circ G_2(f)\|_{1}
+\| 
\partial_{\rho_1} \circ G_2(f)\|\\
\simeq &
\| R_1 \circ \partial_{\rho_1} \circ G_2(f)
\|_{W^{0,2}(\partial\Omega_1 
	\cap \partial\Omega,\rho_2,2)}\\
\lesssim&
\| f \|_{W^{0,2}(\Omega,\rho_2, 2)},
\end{align*}
and, from \eqref{EstPlusZero},
\begin{align*}
 \|u_{2,b1}^0\|^2_{W^{1,2}(\partial\Omega_1\cap
 	\partial\Omega,\rho_2,2)}
  \lesssim& 
\| u_{2,b1}
\|_{W^{0,2}(\partial\Omega_1,\rho_{2},2)}^2
+ \| u_{1,b2} 
\|_{W^{0,2}(\partial\Omega_2,\rho_{1},2)}^2\\
& + \| f 
\|_{W^{0,2}(\Omega,\rho_{2},2)}^2,
\end{align*}
to
conclude
\begin{multline*}
\|  u_{2,b1}\|_{W^{0,2}
 (\partial\Omega_1\cap\partial\Omega,
 \rho_2,2)} 
+\|  u_{1,b2}\|_{W^{0,2}
	(\partial\Omega_2\cap\partial\Omega,
	\rho_1,2)}
\lesssim\\
\| f \|_{W^{0,2}(\Omega,\rho_2, 2)}
+\| f \|_{W^{0,2}(\Omega,\rho_1, 2)}.
\end{multline*}

Higher order (weighted) estimates are
calculated with repeated application
of \eqref{multEst} and the already
obtained lower order
estimates.

We put together the estimates for the
 boundary solution in the following
\begin{thrm}
Assume $u_b$ is a smooth
 form on $\partial\Omega$
satisfying \eqref{bndrySub}
 with analogous expression for 
other microlocal regions.  Then 
 we have the estimates
\begin{equation*}
	\sum_j \left\|
	u_{bj}
	\right\|_{W^{0,s}
		\left(
		\partial\Omega_j \cap \partial\Omega,
		\rho_{k},2
		\right)}
	\lesssim
	\sum_j \| f\|_{W^{0,s}
		\left( \Omega,
		\rho_{j},2 \right)}
.
\end{equation*}
\end{thrm}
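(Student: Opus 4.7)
The plan is to prove the theorem by induction on a half-integer parameter $\alpha$, with $s$ corresponding to $\alpha = s - 1/2$ (so that the weight $\rho^{2\alpha+1} = \rho^{2s}$ and the Sobolev level $\alpha + 1/2 = s$ match the top-level term in the weighted norm $W^{0,s}(\partial\Omega_j \cap \partial\Omega, \rho_k, 2)$). The base case is \eqref{l2Base}, which gives $\|u_b\|_{L^2(\partial\Omega)} \lesssim \|f\|_{L^2(\Omega)}$; this covers the $r = s$ summand (weight $\rho^0$ at Sobolev level $0$) in the definition of the weighted norm for every $s$.

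For the inductive step, I would assume that the desired estimate holds for all half-integer $\alpha' < \alpha$ on both boundaries, and apply the master inequality \eqref{multEst} to bound $\|\rho_2^{2\alpha+1} u_{2,b1}^-\|_{\alpha+1/2}$. Each term on the right-hand side is treated as follows: (a) the $g$-terms, of the form $R_1 \circ \Psi^1_{b1} \circ \partial_{\rho_1} \circ G_2 f$, are estimated via Corollary \ref{corGreenNormDerEst} combined with the property \eqref{lowerOrder} that multiplication by $\rho_2$ lowers the order by one, producing bounds by $\|f\|_{W^{0,s}(\Omega, \rho_2, 2)}$; (b) the lower-weighted $u_{2,b1}^-$ norms $\|\rho_2^{2\alpha+1-2l} u_{2,b1}^-\|_{1/2+\alpha-l}$ with $l \ge 1$ are controlled by the inductive hypothesis at $\alpha' = \alpha - l$; (c) the full $u_b$ norms at lower levels are likewise inductive; (d) the $u_{2,b1}^0$ terms are estimated via the elliptic microlocal estimate \eqref{EstPlusZero}, which gives a gain of a derivative in the $\psi^0_{k_2}$-region and produces $\|u_b\|_{W^{0,s}}$ plus $\|f\|_{W^{0,s}(\Omega, \rho_k, 2)}$ on the right; the $u_b$ term is absorbed by the small-constant factor from \eqref{smallOps}.

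To close the induction at level $\alpha$, I would sum the $u_{2,b1}^-$ estimate with the analogous bounds for $u_{2,b1}^+$ and $u_{2,b1}^0$ (via \eqref{EstPlusZero} applied at the same weighted level), and with the symmetric estimates obtained by exchanging the roles of $\partial\Omega_1$ and $\partial\Omega_2$. The small-constant contributions from $\Psi^1_{\varepsilon,b1}$ and $\Psi^0_{\varepsilon,b1}$, as well as the $s.c.$-terms appearing in \eqref{multEst}, are all absorbed to the left, yielding
\begin{equation*}
\sum_j \|\rho_k^{2\alpha+1} u_{bj}\|_{W^{\alpha+1/2}(\partial\Omega_j \cap \partial\Omega)} \lesssim \sum_j \|f\|_{W^{0,\alpha+1/2}(\Omega, \rho_j, 2)}.
\end{equation*}
Combining this with the inductive estimates for all $\alpha' \le \alpha$ assembles the full weighted Sobolev norm on the left and completes the step. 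The illustrative computations carried out in the excerpt through $\alpha = 3/2$ are exactly the pattern of the induction.

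The main obstacle will be the bookkeeping surrounding the cross-boundary error operators $E^-_2 u_b$, and in particular the $\lre^{jk}_{-\alpha}$-pieces (most critically $\lre^{21}_{-3/2} \partial_{\rho_2} P_2(u_b)|_{\partial\Omega_2}$ and the composition $|D_{b1}| \circ R_1 \circ \Theta_2^+ \circ R_2 \circ \Theta_1^+$ of case $iii)$ from Section \ref{secDNO}). These terms couple the estimates on $\partial\Omega_1$ and $\partial\Omega_2$ and require the induction to be carried out symmetrically across both boundaries; moreover, they must be handled using the specific structure \eqref{form3/2} of the $\lre^{jk}_{-3/2}$ operators together with repeated applications of \eqref{lowerOrder} to convert a factor of $\rho_2$ into an extra order of regularity, so that Theorem \ref{weightedLRE} can be invoked at the correct Sobolev level. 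The remainder of the proof is essentially routine verification that the pattern of cancellations and absorptions observed in the illustrative cases persists at every half-integer level of the induction.
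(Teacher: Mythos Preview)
Your proposal is correct and follows essentially the same approach as the paper: induction on the half-integer parameter $\alpha$ with base case \eqref{l2Base}, using the master inequality \eqref{multEst} at each step, estimating the $g$-terms via Corollary \ref{corGreenNormDerEst}, the $u_{2,b1}^0$-terms via \eqref{EstPlusZero}, the cross-boundary $E^-_2 u_b$ terms via Theorem \ref{weightedLRE} together with the structural information in \eqref{form3/2} and \eqref{lowerOrder}, and combining symmetrically with the $\partial\Omega_2$ estimates before absorbing the $s.c.$-terms. Your identification of the $\lre^{jk}$-pieces as the main bookkeeping obstacle is also on target; the paper handles these exactly as you describe, by splitting into the cases $\alpha$ integer versus half-integer and using \eqref{lowerOrder} to trade a factor of $\rho_2$ for an order of decay before applying Theorem \ref{weightedLRE} and Theorem \ref{thrmDNOWeight}.
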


\section{Regularizing operators}
\label{secRegularize}

Here we sketch the argument
 validating the regularity 
  assumption in Section
  \ref{secWeightedEst} we used to
integrate by parts.  We adopt the 
 regularizing operators used in
\cite{K85}.  
 We let $\zeta(x,\rho_2)$ be a cutoff
function with support in a neighborhood
 of the origin intersected with
  $\partial\Omega_1$ (i.e. a neighborhood
of the boundary singularity in which we
 are working).
For $\delta>0$ we write
 $\lrp_{\delta}^-$ to denote an operator
  in $\Psi^{0}(\partial\Omega_1)$
whose symbol,
 $p_{\delta}^-(x,\rho_2, \eta_2,\xi) = \sigma(\lrp_{\delta}^-)$
is of the form
\begin{equation*}
 p_{\delta}^- = c_{\delta}\left(
 \sqrt{\eta_2^2+\xi^2}\right)
  \zeta(x,\rho_2) 
  \psi^{-}_k(\eta_2,\xi), 
\end{equation*}
where $c_{\delta}(t) \in 
 C^{\infty}_0(\overline{\mathbb{R}_+})$
with 
 $c_{\delta}(t)\equiv 1$ for  $t\in[0,\delta]$
and $c_{\delta}(t)\equiv 0$ 
 for $t>\delta+1$.
Furthermore, the symbol estimates,
 realizing $p_{\delta}^-$ as a function
in class $\lrs^{0}(\partial\Omega_1
 \times \mathbb{R}^3)$,
 are independent of the parameter $\delta$.
We also write $\lrp^{'-}_{\delta}$ 
 for operators which dominate
$\lrp_{\delta}^-$, by which we mean that
 the symbol,  $p_{\delta}^{'-}$
can be written
\begin{equation*}
p_{\delta}^{'-} = c_{\delta'}\left(
\sqrt{\eta_2^2+\xi^2}\right)
\zeta'(x,\rho_2) 
\psi^{-}_{k'}(\eta_2,\xi), 
\end{equation*}
where $\delta'>\delta$, 
 $\zeta'\in C^{\infty}(\partial\Omega_1)$
with support near the origin and
 $\zeta'\equiv 1$ on the support of 
$\zeta$, and $k'<k$.

We note 
 $\lrp_{\delta}^- u_{2,b1} \in 
  C^{\infty}(\partial\Omega_1)$ and
hence the integration by parts 
leading to
\eqref{halfEst} is valid 
 for  $\lrp_{\delta}^- u_{2,b1}$.  We recall 
that we consider $\Lambda^{\alpha}$ to be an operator
 with symbol
\begin{equation*}
 \sigma(\Lambda^{\alpha}) = \varphi(x,\rho_2) 
  \left( 1 + \xi^2 + \eta_2^2\right)^{\alpha/2},
\end{equation*}
where $\varphi$ has support where $\zeta\equiv 1$.
\begin{equation*}
\begin{aligned}
\|  \rho_2^{2\alpha+1} \Lambda^{\alpha}
 \lrp_{\delta}^-
  u_{2,b1} \|_{1/2}^2
\lesssim& \left|\left((T_1 
\rho_2^{2\alpha+1} \Lambda^{\alpha}
 \lrp_{\delta}^-
  u_{2,b1}, 
  \rho_2^{2\alpha+1} \Lambda^{\alpha}
   \lrp_{\delta}^-
   u_{2,b1}\right) \right|\\
\lesssim& \| L_{b1} 
\rho_2^{2\alpha+1} \Lambda^{\alpha}
\lrp_{\delta}^-
 u_{2,b1} \|^2 +
\|\Lb_{b1} 
\rho_2^{2\alpha+1} \Lambda^{\alpha}
\lrp_{\delta}^-
 u_{2,b1}\|^2\\
 & + \|\rho_2^{2\alpha+1} \Lambda^{\alpha}
 \lrp_{\delta}^- u_{2,b1}\|^2\\
\lesssim& \| \rho_2^{2\alpha+1}  L_{b1} 
\Lambda^{\alpha}
\lrp_{\delta}^-
u_{2,b1} \|^2 +
\|\rho_2^{2\alpha+1}\Lb_{b1} 
 \Lambda^{\alpha}
\lrp_{\delta}^-
u_{2,b1}\|^2\\
& + \|\rho_2^{2\alpha} \Lambda^{\alpha}
\lrp_{\delta}^- u_{2,b1}\|^2.
\end{aligned}
\end{equation*}

In estimating the first two terms
 on the right, we follow our 
previous analysis to write
\begin{align}
\nonumber
(\rho_2^{2\alpha+1} L_{b1} \Lambda^{\alpha} 
\lrp_{\delta}^- u_{2,b1},
&\rho_2^{2\alpha+1} L_{b1} \Lambda^{\alpha}
\lrp_{\delta}^- u_{2,b1})\\
\nonumber
\lesssim& -(\rho_2^{2\alpha+1} 
\Lb_{b1} L_{b1} \Lambda^{\alpha} 
\lrp_{\delta}^- u_{2,b1}, 
\rho_2^{2\alpha+1} \Lambda^{\alpha} \lrp_{\delta}^-u_{2,b1})\\
\label{LLReg}
&+ s.c. \|\rho_2^{2\alpha+1}
L_{b1}\Lambda^{\alpha} 
\lrp_{\delta}^- u_{2,b1}\|^2 +
\|\rho_2^{2\alpha}\Lambda^{\alpha}
\lrp_{\delta}^- u_{2,b1}\|^2.
\end{align}
We can commute the two $L$ derivatives past
 the $\Lambda^{\alpha}$ operator as before,
but we need then an expression for
 the resulting operator applied to 
$\lrp_{\delta}^- u_{2,b1}^-$.
 For this, we need a replacement for
\eqref{bndrySub}:
\begin{multline}
\label{LLReplace}
L_{b1} \Lb_{b1} \lrp_{\delta}^- u_{2,b1}
= \lrp_{\delta}^- g+ 
\Psi^1_{b1,\psi^0}\lrp_{\delta}^-
u_{2,b1} +
\Psi_{\varepsilon,b1}^1  
\lrp_{\delta}^-u_{2,b1}
 + \Psi^1_{b1} E^-_2 u_b\\
+ \Psi^0_{b1}\lrp_{\delta}^{'-} L_{b1} u_{2,b1}
+ \Psi^0_{b1}\lrp_{\delta}^{'-} \Lb_{b1} u_{2,b1}
+ \Psi^0_{b1}\lrp_{\delta}^{'-} u_{2,b1}
.
\end{multline}
Again, \eqref{LLReplace} holds in the support
 of $\varphi$.  That the support of 
  $\varphi$ is contained in $\zeta\equiv 1$ gives rise to
the $\Psi^1_{b1,\phi^0}$ operator.

We can then estimate the term
  in 
\eqref{LLReg}
by
\begin{align}
\nonumber
(\rho_2^{2\alpha+1} L_{b1} \Lambda^{\alpha} 
\lrp_{\delta}^- u_{2,b1},
&\rho_2^{2\alpha+1} L_{b1} \Lambda^{\alpha}
\lrp_{\delta}^- u_{2,b1})\\
\nonumber
\lesssim&
-(\rho_2^{2\alpha+1} 
\Lambda^{\alpha}  L_{b1} \Lb_{b1}
\lrp_{\delta}^- u_{2,b1}, 
\rho_2^{2\alpha+1} \Lambda^{\alpha} \lrp_{\delta}^- u_{2,b1})\\
\nonumber
&+
 s.c. \| \rho_2^{2\alpha+1}  L_{b1} 
 \Lambda^{\alpha}
 \lrp_{\delta}^-
 u_{2,b1} \|^2 +
s.c. \|\rho_2^{2\alpha+1}\Lb_{b1} 
 \Lambda^{\alpha}
 \lrp_{\delta}^-
 u_{2,b1}\|^2\\
\label{LLReg2}
 & + \|\rho_2^{2\alpha} \Lambda^{\alpha}
 \lrp_{\delta}^- u_{2,b1}\|^2 
 ,
\end{align} 
again using G\aa rding's inequality to handle
  the commutation term,
$[\Lb_{b1},L_{b1}]$, and using \eqref{LLReplace}
then gives for the first term on the right
\begin{align*}
 (\rho_2^{2\alpha+1} 
 \Lambda^{\alpha}  L_{b1} \Lb_{b1}
 \lrp_{\delta}^- u_{2,b1}, &
 \rho_2^{2\alpha+1} \Lambda^{\alpha} \lrp_{\delta}^- u_{2,b1})\\
  \lesssim&
(\rho_2^{2\alpha+1} 
\Lambda^{\alpha} 
\lrp_{\delta}^- g, 
\rho_2^{2\alpha+1} \Lambda^{\alpha} \lrp_{\delta}^- u_{2,b1})\\
&+ (\rho_2^{2\alpha+1} 
\Lambda^{\alpha} 
\Psi^1_{b1,\psi^0} \lrp_{\delta}^- 
u_{2,b1}, 
\rho_2^{2\alpha+1} \Lambda^{\alpha} \lrp_{\delta}^- u_{2,b1})\\
&+ (\rho_2^{2\alpha+1} 
\Lambda^{\alpha} 
\Psi^1_{\varepsilon,b1} \lrp_{\delta}^- 
 u_{2,b1}, 
\rho_2^{2\alpha+1} \Lambda^{\alpha} \lrp_{\delta}^- u_{2,b1})\\
&+ (\rho_2^{2\alpha+1} 
\Lambda^{\alpha} 
\Psi^1_{b1} E^-_2
u_b, 
\rho_2^{2\alpha+1} \Lambda^{\alpha} \lrp_{\delta}^- u_{2,b1})\\
&+ (\rho_2^{2\alpha+1} 
\Lambda^{\alpha} 
 \lrp_{\delta}^{'-} L_{b1}
u_{2,b1}, 
\rho_2^{2\alpha+1} \Lambda^{\alpha} \lrp_{\delta}^- u_{2,b1})
\\
&+ (\rho_2^{2\alpha+1} 
\Lambda^{\alpha} 
\lrp_{\delta}^{'-} \Lb_{b1}
u_{2,b1}, 
\rho_2^{2\alpha+1} \Lambda^{\alpha} \lrp_{\delta}^- u_{2,b1})\\
 & + \|\rho_2^{2\alpha} \Lambda^{\alpha}
\lrp_{\delta}^{'-} u_{2,b1}\|^2 .
\end{align*}

From Section \ref{secWeightedEst}, the
 first three terms on the 
  right can be bounded by
\begin{align*}
\sum_{l\le \lceil \alpha \rceil}
\| \rho_2^{2\alpha-2l+1}
\lrp_{\delta}^{-}   g&\|_{\alpha-l-1/2}^2
+  \|\lrp_{\delta}^{-} g\|^2+
s.c.\| \rho_2^{2\alpha+1} \lrp_{\delta}^{-} 
 u_{2,b1}
\|_{1/2+\alpha}^2\\
&+
\sum_{l \le \lceil\alpha\rceil} \| \rho_2^{2\alpha+1-2l}  
 \lrp_{\delta}^{-} u_{2,b1}^0\|_{\alpha+1-l}^2
+ \| \lrp_{\delta}^{-} u_{2,b1}^0\|^2
\\
\nonumber
&+ \sum_{1\le l\le \lceil\alpha\rceil}   
\| \rho_2^{2\alpha+1-2l} 
\lrp_{\delta}^{-} u_{2,b1}
\|_{1/2+\alpha-l}^2
+ \|  \lrp_{\delta}^{-}  u_{2,b1}
\|^2.
\end{align*}

We note that in 
the second and third to last
 terms, the $\lrp^{'-}_{\delta}$ and
$\lrp^{-}_{\delta}$ can be switched, 
 the error involving only lower order terms:
\begin{align*}
 (\rho_2^{2\alpha+1} 
 \Lambda^{\alpha} 
 \lrp_{\delta}^{'-} L_{b1}
 u_{2,b1}, &
 \rho_2^{2\alpha+1} \Lambda^{\alpha} \lrp_{\delta}^- u_{2,b1})\\
  \lesssim&
(\rho_2^{2\alpha+1} 
\Lambda^{\alpha} 
\lrp_{\delta}^{-} L_{b1}
u_{2,b1}, 
\rho_2^{2\alpha+1} \Lambda^{\alpha} \lrp_{\delta}^{'-} u_{2,b1})\\
& + 
\sum_{1\le l\le \lceil\alpha\rceil}
 \|\rho_2^{2\alpha-2l-1} \Psi_b^{\alpha-l}
 \lrp_{\delta}^{''-} L_{b1}
 u_{2,b1}\|^2
\\
& + 
\sum_{1\le l\le \lceil\alpha\rceil}
\|\rho_2^{2\alpha-2l-1} \Psi_b^{\alpha-l}
\lrp_{\delta}^{''-} \Lb_{b1}
u_{2,b1}\|^2 \\
& + s.c. 
\| \rho_2^{2\alpha+1} \Lambda^{\alpha} 
 \lrp_{\delta}^{''-} u_{2,b1}\|^2
 + \|u_b\|^2
 ,
\end{align*}
where $\lrp_{\delta}^{''-}$
 dominates $\lrp_{\delta}^{'-}$
(similar relations hold for 
 $\Lb_{b1}$), and
where we use
\begin{equation*}
\rho_2^{2\alpha+1} \lrp_{\delta}^{'-} \Psi^{\alpha}_b 
=
\lrp_{\delta}^{'-} \rho_2^{2\alpha+1} 
  \Psi^{\alpha}_b
+
\sum_{1\le l\le \lceil\alpha\rceil}
\rho_2^{2\alpha+1-2l}
\Psi^{\alpha-l}_b \lrp_{\delta}^{''-}
+ 
\Psi^{-1}_b.
\end{equation*}

To handle the terms with
 $\Psi_b^1 E_2^- u_b$ we argue as in
Section \ref{secWeightedEst} to reduce to 
 lower order terms:
\begin{align*}
(\rho_2^{2\alpha+1} 
\Lambda^{\alpha} 
\Psi_b^1 E_2^- u_b, &
\rho_2^{2\alpha+1} \Lambda^{\alpha} \lrp_{\delta}^- u_{2,b1})\\
\lesssim&
 \sum_{l \le \lfloor \alpha \rfloor}
 \| \rho^{2\alpha -2l} u_{b}
 \|_{\alpha-l}
 +\|u_{b}\|+s.c.\| \rho_2^{2\alpha+1} \lrp_{\delta}^{-} 
u_{2,b1}
\|_{1/2+\alpha}^2\\
&+ \sum_{1\le l\le \lceil\alpha\rceil}   
\| \rho_2^{2\alpha+1-2l} 
\lrp_{\delta}^{-} u_{2,b1}
\|_{1/2+\alpha-l}^2.
\end{align*}
Putting this together yields
\begin{align}
\nonumber
(\rho_2^{2\alpha+1} 
\Lambda^{\alpha}  L_{b1} \Lb_{b1}
\lrp_{\delta}^-& u_{2,b1}^-, 
\rho_2^{2\alpha+1} \Lambda^{\alpha} \lrp_{\delta}^- u_{2,b1}^-)\\
\nonumber
\lesssim&
\sum_{l\le \lceil \alpha \rceil}
\| \rho_2^{2\alpha-2l+1}
\lrp_{\delta}^{-}   g\|_{\alpha-l-1/2}^2
+  \|\lrp_{\delta}^{-} g\|^2
\\
\nonumber
&+
s.c.\| \rho_2^{2\alpha+1} \lrp_{\delta}^{-} 
u_{2,b1}
\|_{1/2+\alpha}^2\\
\nonumber
&+
\sum_{l \le \lceil\alpha\rceil} \| \rho_2^{2\alpha+1-2l}  
\lrp_{\delta}^{-} u_{2,b1}^0\|_{\alpha+1-l}^2
+ \| \lrp_{\delta}^{-} u_{2,b1}^0\|^2\\
\nonumber
&+ \sum_{1\le l\le \lceil\alpha\rceil}   
\| \rho_2^{2\alpha+1-2l} 
\lrp_{\delta}^{-} u_{2,b1}
\|_{1/2+\alpha-l}^2
\\
\nonumber
&+
s.c. \| \rho_2^{2\alpha+1}  L_{b1} 
\Lambda^{\alpha}
\lrp_{\delta}^-
u_{2,b1} \|^2 +
s.c. \|\rho_2^{2\alpha+1}\Lb_{b1} 
\Lambda^{\alpha}
\lrp_{\delta}^{-}
u_{2,b1}\|^2
\\
\nonumber
& + 
\sum_{1\le l\le \lceil\alpha\rceil}
\|\rho_2^{2\alpha-2l-1} \Lambda^{\alpha-l}
\lrp_{\delta}^{''-} L_{b1}
u_{2,b1}^-\|^2
\\
\nonumber
& + 
\sum_{1\le l\le \lceil\alpha\rceil}
\|\rho_2^{2\alpha-2l-1} \Lambda^{\alpha-l}
\lrp_{\delta}^{''-} \Lb_{b1}
u_{2,b1}^-\|^2 \\
& + \sum_{l \le \lfloor \alpha \rfloor}
\| \rho^{2\alpha -2l} u_{b}
\|_{\alpha-l}
+ \|u_b\|^2
\label{LLReg3}
\end{align}

Genuine estimates can be obtained 
 from the following argument. 
From \eqref{LLReg2}, and the corresponding
 estimates for its analogue with
the $\Lb_{b1}$ operator replacing
 $L_{b1}$, and from
\eqref{LLReg3}, we can obtain 
 estimates for
\begin{equation}
\label{LLSumReg}
 \| \rho_2^{2\alpha+1}  L_{b1} 
 \Lambda^{\alpha}
 \lrp_{\delta}^-
 u_{2,b1} \|^2 +
 \|\rho_2^{2\alpha+1}\Lb_{b1} 
 \Lambda^{\alpha}
 \lrp_{\delta}^-
 u_{2,b1}\|^2
\end{equation}
in terms of 
 a $s.c.\| \rho_2^{2\alpha+1} \lrp_{\delta}^{-} 
 u_{2,b1}
 \|_{1/2+\alpha}^2$ plus terms of 
(weighted) lower order of
 $u_b$ as well as weighted lower order
terms of $L_{b1} u_{2,b1}$ and
 $\Lb_{b1} u_{2,b1}$.  The higher order 
  norms involving $u_{2,b1}^0$ can
 be handled using \eqref{EstPlusZero}:
\begin{equation*}
 \sum_{l \le \lceil\alpha\rceil} \| \rho_2^{2\alpha+1-2l}  
 \lrp_{\delta}^{-} u_{2,b1}^0\|_{\alpha+1-l}^2
  \lesssim 
 \sum_{l \le \lfloor \alpha \rfloor}
\| \rho^{2\alpha -2l} u_{b}
\|_{\alpha-l}
 + \|f\|_{W^{0,\lceil\alpha\rceil}(\Omega,\rho_2,2)}.
\end{equation*} 
 
 An induction argument
thus gives an estimate for 
 $\| \rho_2^{2\alpha+1} \lrp_{\delta}^{-} 
 u_{2,b1}
 \|_{1/2+\alpha}^2$ which can then be
used to deduce estimates for
 \eqref{LLSumReg}.
Letting $\delta\rightarrow \infty$ and 
 combining estimates for
$u_{b1}^+$ and $u_{b1}^0$ yields estimates
 for 
$\| \rho_2^{2\alpha+1} 
u_{2,b1}
\|_{1/2+\alpha}^2$ ,
as well as
\begin{equation*}
\| \rho_2^{2\alpha+1}  L_{b1} 
\Lambda^{\alpha}
u_{2,b1} \|^2 +
\|\rho_2^{2\alpha+1}\Lb_{b1} 
\Lambda^{\alpha}
u_{2,b1}\|^2.
\end{equation*}
All these
  estimates can in turn be used in the next induction 
 step to estimate
$\| \rho_2^{2\alpha+2} \lrp_{\delta}^{-} 
u_{2,b1}
\|_{1+\alpha}^2$ and so forth.
 
Recalling the notation
 $g=R_1\circ \Psi^1_b\circ \partial_{\rho_1} G_2(f)$
we have
\begin{thrm}
	\label{ubEst}
Let $\Omega\subset\mathbb{C}^2$
  be the piecewise smooth
intersection domain, $\Omega = \Omega_1\cap\Omega_2$
with generic corners.
Let $u = u_1 \omegab_1 + u_2 \omegab_2$
 be the solution to the 
  \dbar-Neumann problem \eqref{dbarNEqn}
with boundary conditions \eqref{bndryCndns}.
 Let $u_{bj} = u|_{\partial\Omega_j\cap \partial\Omega}$.
Then 
	\begin{equation*}
	\sum_j \left\|
	u_{bj}
	\right\|_{W^{0,s}
		\left(
		\partial\Omega_j \cap \partial\Omega,
		\rho_{k},2
		\right)}
	\lesssim
	\sum_j \| f\|_{W^{0,s}
		\left( \Omega,
		\rho_{j},2 \right)}
	.
	\end{equation*}
\end{thrm}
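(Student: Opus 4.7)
The plan is to promote the a priori estimate of the previous (smooth) theorem in Section \ref{secWeightedEst} to a genuine estimate by using the regularizing family $\lrp_\delta^-$ of Section \ref{secRegularize}, and to run an induction on $s$ that simultaneously controls weighted norms of $u_b$ and of $L_{b1}u_{2,b1},\,\Lb_{b1}u_{2,b1}$. The base case $s=0$ is provided by \eqref{l2Base}, which follows from $u,\mdbar u,\mdbar^\ast u\in W^{1/2}(\Omega)$ via Theorem 3.1 of \cite{MS98} and the trace/tangential-derivative splitting in \eqref{withTang}. By symmetry between the two boundaries and the microlocal splitting $u_{2,b1}=u_{2,b1}^+ + u_{2,b1}^0 + u_{2,b1}^-$, together with the elliptic estimates \eqref{EstPlusZero} in the $\psi^+$ and $\psi^0$ regions, it suffices to control the weighted $W^{\alpha+1/2}$-norm of $\rho_2^{2\alpha+1}u_{2,b1}^-$ by $\|f\|_{W^{0,s}(\Omega,\rho_j,2)}$ (and analogous terms).

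The induction step proceeds as follows. Assume the theorem for all indices below $s=\alpha+1/2$ (equivalently, for all weight exponents below $2\alpha+1$). Apply $\lrp_\delta^-$ to $u_{2,b1}$ to obtain a smooth tangential distribution, for which the integration by parts leading to \eqref{halfEst} is legitimate. Using \eqref{LLReplace} in place of \eqref{bndrySub}, and commuting $\Lb_{b1}L_{b1}$ past $\Lambda^\alpha$ via G\aa rding's inequality in the support of $\psi^-_k$, one arrives at the regularized version \eqref{LLReg3}. The right-hand side contains: (i) data terms $\|\rho_2^{2\alpha-2l+1}\lrp_\delta^- g\|_{\alpha-l-1/2}$ which, since $g=R_1\circ\Psi^1_{b1}\circ\partial_{\rho_1}\circ G_2 f$, are bounded by $\|f\|_{W^{0,\lceil\alpha\rceil}(\Omega,\rho_2,2)}$ via Corollary \ref{corGreenNormDerEst}; (ii) $\psi^0$-region terms controlled by \eqref{EstPlusZero}; (iii) the cross-boundary terms $\Psi^1_{b1}E_2^- u_b$ handled by Theorem \ref{weightedLRE} applied to the $\lre^{21}_{-1/2}$ and $\lre^{21}_{-3/2}$ components with the explicit form \eqref{form3/2} that produces a genuine $\rho_2$ factor; (iv) lower-order weighted norms of $u_b$ controlled by the induction hypothesis; and (v) the commutator terms $\lrp_\delta^{'-}L_{b1}u_{2,b1}$ and $\lrp_\delta^{'-}\Lb_{b1}u_{2,b1}$ from \eqref{LLReplace}.

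The dominant difficulty, and the one that forces the whole regularization scheme, is item (v): these terms involve precisely $L_{b1}$ and $\Lb_{b1}$ applied to $u_{2,b1}$, which are of the same order as what we are trying to estimate. The resolution, as outlined after \eqref{LLSumReg}, is to treat the pair $\bigl(\|\rho_2^{2\alpha+1}L_{b1}\Lambda^\alpha\lrp_\delta^- u_{2,b1}\|^2+\|\rho_2^{2\alpha+1}\Lb_{b1}\Lambda^\alpha\lrp_\delta^- u_{2,b1}\|^2\bigr)$ and $\|\rho_2^{2\alpha+1}\lrp_\delta^- u_{2,b1}\|_{\alpha+1/2}^2$ as a coupled system: swap $\lrp_\delta^{'-}$ with $\lrp_\delta^-$ modulo lower-order symbols (as in the displayed commutation of $\rho_2^{2\alpha+1}\lrp_\delta^{'-}\Psi^\alpha_b$), absorb the small-constant terms, and then use the inductive hypothesis to bound the genuinely lower-weight factors of $L_{b1}u_{2,b1}$ and $\Lb_{b1}u_{2,b1}$. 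This yields an estimate for $\lrp_\delta^- u_{2,b1}$ uniform in $\delta$.

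Finally, letting $\delta\to\infty$ and combining with the elliptic estimates \eqref{EstPlusZero} for $u_{2,b1}^+,u_{2,b1}^0$ and the symmetric analysis on $\partial\Omega_2$ for $u_{1,b2}$ completes the step from $\alpha$ to $\alpha+1/2$. Iterating through half-integer levels (as illustrated by the explicit $\alpha=0,1/2,1,3/2$ computations culminating at \eqref{firstStep} and the displays immediately following) establishes the weighted estimate for every integer $s\ge 0$; non-integer $s$ then follow by interpolation between the weighted Sobolev spaces of Definition \ref{defnWghtSpace}.
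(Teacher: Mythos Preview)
Your proposal is correct and follows essentially the same route as the paper: you invoke the base case \eqref{l2Base}, apply the regularizers $\lrp_\delta^-$ so that the integration by parts in \eqref{halfEst} is legitimate, use \eqref{LLReplace} and \eqref{LLReg3}, resolve the coupled system \eqref{LLSumReg} by the swap-and-absorb argument, let $\delta\to\infty$, and induct on half-integer $\alpha$ while combining with the elliptic $\psi^+,\psi^0$ estimates \eqref{EstPlusZero} and the symmetric analysis on $\partial\Omega_2$. The only superfluous remark is the final appeal to interpolation for non-integer $s$: by Definition~\ref{defnWghtSpace} the index $s$ in $W^{0,s}$ is always an integer, so the half-integer induction on $\alpha$ already covers all cases of the theorem.
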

 
\section{Estimates of solution operator}

By definition of the Poisson and
 Green operators, the solution to 
\eqref{dbarNEqn} with boundary conditions
 \eqref{bndryCndns} can be written
\begin{equation*}
 u = G(2f) + P(u_b).
\end{equation*}
From the weighted estimates for 
 Green's operator (see 
  Theorem \ref{weightedGreen}), we have 
\begin{equation*}
\sum_j \left\|  G(2f)
\right\|_{W^{2,s}(\Omega,\rho_{j},2)}
\lesssim
\sum_j \| f\|_{W^{0,s}
	\left( \Omega,
	\rho_{j},2 \right)}.
\end{equation*}
In addition, we have the weighted estimates
 for the Poisson operator from Theorem
 \ref{thrmWeightedPoisson}:
\begin{align*}
 \left\|
 P(u_b)  \right\|_{W^{1/2,s}
 	\left( \Omega, \rho,2
 	\right)}
 \lesssim& \sum_j \left\|
 u_{bj} 
 \right\|_{W^{0,s}
 	\left(
 	\partial\Omega_j\cap \partial\Omega,
 	\rho_{k},2
 	\right) }\\
\lesssim&
\sum_j \| f\|_{W^{0,s}
	\left( \Omega,
	\rho_{j},2 \right)}.
\end{align*}
We thus have estimates for the \dbar-Neumann
 problem:
\begin{thrm}
\label{dbarNEst}
Let $\Omega$ be as in Theorem \ref{ubEst}.
Let $f\in W^{0,s}(\Omega,\rho_j,2)$ for
 $j=1,2$.
	Let $N$ be solution operator 
to the \dbar-Neumann problem \eqref{dbarNEqn},
 \eqref{bndryCndns}.
Then
\begin{equation*}
 \sum_j \|Nf\|_{W^{1/2,s}
 \left(
\Omega,
 \rho_j,2
 \right)} \lesssim
\sum_j \|f\|_{W^{0,s}
	\left(
\Omega,
	\rho_j,2
	\right)}.
\end{equation*}
\end{thrm}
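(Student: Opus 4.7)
The plan is to split the solution operator into its Poisson and Green's components and then chain together the three main weighted estimates already established in the paper. Writing $u = Nf$, the representation from Section \ref{secSetup} gives $u = G(2f) + P(u_b)$, where $u_b = u|_{\partial\Omega}$ and $G$, $P$ are the Green's and Poisson operators studied in Sections \ref{secGreens} and \ref{secPoisson}. The boundary conditions in \eqref{bndryCndns} are enforced via $u_b$ solving the boundary system derived in Section \ref{secBndryEqns}; this is precisely the setup to which Theorem \ref{ubEst} applies.

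First I would estimate the interior Green's piece. Theorem \ref{weightedGreen} directly yields
\begin{equation*}
\|G(2f)\|_{W^{2,s}(\Omega,\rho_j,2)} \lesssim \|f\|_{W^{0,s}(\Omega,\rho_j,2)}
\end{equation*}
for each $j=1,2$, and Sobolev $2$ control trivially dominates Sobolev $1/2$ control in the same weighted space. Next, I would handle the Poisson piece by appealing to Theorem \ref{thrmWeightedPoisson}, which controls $\|P(u_b)\|_{W^{1/2,s}(\Omega,\rho,2)}$ by the sum $\sum_j \|u_{bj}\|_{W^{0,s}(\partial\Omega_j\cap\partial\Omega,\rho_{\hat{j}},2)}$ of weighted $L^2$-norms of the boundary values on each face.

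The crucial input is then Theorem \ref{ubEst}, which bounds the boundary solution by the data:
\begin{equation*}
\sum_j \|u_{bj}\|_{W^{0,s}(\partial\Omega_j\cap\partial\Omega,\rho_{\hat{j}},2)} \lesssim \sum_j \|f\|_{W^{0,s}(\Omega,\rho_j,2)}.
\end{equation*}
Chaining the Poisson estimate with this boundary estimate, and adding the Green's contribution, gives the desired bound after summing over $j$. Strictly speaking, no obstacle remains here since all heavy lifting was carried out in earlier sections: the regularity of the boundary data is supplied by Theorem \ref{ubEst} (whose proof requires the a priori weighted $L^2$ boundary estimates of Section \ref{secWeightedEst} together with the regularization argument of Section \ref{secRegularize}), while the Poisson/Green's inversion converts this into interior weighted Sobolev control.

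The only point requiring minor care is matching the two weight systems: Theorem \ref{thrmWeightedPoisson} is stated in the space $W^{1/2,s}(\Omega,\rho,2)$ with the full weight $\rho = \rho_1\rho_2$, whereas the theorem we wish to prove uses the single-defining-function weight $W^{1/2,s}(\Omega,\rho_j,2)$. Since $|\rho_j|\le C$ on $\Omega$, the norm with weight $\rho$ dominates the norm with weight $\rho_j$ alone for each $j$, so Theorem \ref{thrmWeightedPoisson} immediately yields control of $\|P(u_b)\|_{W^{1/2,s}(\Omega,\rho_j,2)}$ for each $j$, and summing completes the argument. No new analytical obstacle arises at this final assembly step.
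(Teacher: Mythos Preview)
Your approach is exactly the paper's: decompose $u = G(2f) + P(u_b)$, apply Theorem~\ref{weightedGreen} to the Green's piece, Theorem~\ref{thrmWeightedPoisson} to the Poisson piece, and feed in Theorem~\ref{ubEst} for the boundary values. The paper's own proof is nothing more than this chaining.

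However, your final remark about matching the weight systems contains an error in the direction of the inequality. You claim that since $|\rho_j|\le C$, the norm with full weight $\rho=\rho_1\rho_2$ dominates the norm with the single weight $\rho_j$. The opposite is true: multiplication by the bounded smooth function $\rho_k^{(sk-rk)}$ is a bounded operator on $W^{\alpha+s-r}(\Omega)$, so
\[
\|\rho_1^{(sk-rk)}\rho_2^{(sk-rk)} f\|_{W^{\alpha+s-r}} \lesssim \|\rho_j^{(sk-rk)} f\|_{W^{\alpha+s-r}},
\]
which gives $\|\cdot\|_{W^{1/2,s}(\Omega,\rho,2)} \lesssim \|\cdot\|_{W^{1/2,s}(\Omega,\rho_j,2)}$, not the reverse. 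So from $\|P(u_b)\|_{W^{1/2,s}(\Omega,\rho,2)}$ you cannot immediately pass to control in $W^{1/2,s}(\Omega,\rho_j,2)$.

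That said, the paper's own proof does not address this point either: it derives the Poisson estimate in $W^{1/2,s}(\Omega,\rho,2)$ and then simply asserts the theorem. Note also that the Main Theorem in the introduction states the left-hand side with the full weight $\rho$, not $\rho_j$, so the $\rho_j$ in the statement of Theorem~\ref{dbarNEst} appears to be a typographical slip. With the left-hand side read as $W^{1/2,s}(\Omega,\rho,2)$, your argument (and the paper's) goes through without any weight comparison needed.
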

When $s=0$ this is the same result
 of Michel and Shaw in 
\cite{MS98} (Theorem 1.2).

If we also suppose that the data form
 $f\in W^s(\Omega)$.  Then since
\begin{equation*}
 \sum_j \|f\|_{W^{0,s}
 	\left(
\Omega,
 	\rho_{j},2
 	\right)}
\lesssim 
  \|f\|_{W^{s}
  	\left(
  \Omega
  	\right)},
\end{equation*}
Theorem \ref{dbarNEst}
 implies
\begin{equation*}
 \|Nf\|_{W^{1/2,s}
 	\left(
 	\Omega,
 	\rho,2
 	\right)} \lesssim
\|f\|_{W^{s}
 	\left(
 \Omega
 	\right)}.
\end{equation*}

\end{document}